\numberwithin{equation}{section}
\newcommand{\mm}{\mathbb}
\newcommand{\diff}{\mathrm{Diff}_{\partial}}
\newcommand{\emb}{\mathrm{Emb}}
\newcommand{\mmm}{\mathcal}
\newcommand{\Emb}{\mathrm{Emb}}
\newcommand{\GL}{\mathrm{GL}}
\newcommand{\SO}{\mathrm{SO}}
\newcommand{\MTSO}{\mathrm{MTSO}}
\newcommand{\GLt}{\GL_2^+(\R)}
\newcommand{\GLd}{\GL_d(\R)}
\newcommand{\GLdp}{\GL_d^+(\R)}
\renewcommand{\:}{\mathop:}
\newcommand{\Tub}{D}
\newcommand{\dcps}{{\tau_\Sigma\times\varepsilon_\Sigma}}
\renewcommand{\int}{\mathrm{int}}
\newcommand{\pushout}[1]{\underset{#1}{\times}}
\newcommand{\pointedpushout}[1]{\underset{#1}{\wedge}}
\newcommand{\parallelsum}{\mathbin{\!/\mkern-5mu/\!}}
\newcommand{\dcup}[1]{{\raisebox{.5ex}{$\scriptstyle \coprod\limits_{#1}$}}}
\newcommand{\fmconf}{\overline{C}}
\mathchardef\ordinarycolon\mathcode`\:
\theoremstyle{plain}
\newtheorem{MainThm}{Theorem}
\newtheorem{MainCor}[MainThm]{Corollary}
\newtheorem{theorem}{Theorem}[section]
\newtheorem{proposition}[theorem]{Proposition}
\newtheorem{lemma}[theorem]{Lemma}
\newtheorem{corollary}[theorem]{Corollary}
\theoremstyle{definition}
\newtheorem{definition}[theorem]{Definition}
\newtheorem{example}[theorem]{Example}
\theoremstyle{remark}
\newtheorem{remark}[theorem]{Remark}
\newtheorem*{remark*}{Remark}
\newcommand{\R}{\mathbb{R}}
\newcommand{\F}{\mathcal{F}}
\newcommand{\id}{\operatorname{id}}
\newcommand{\Map}{\operatorname{Map}}
\newcommand{\Ima}{\operatorname{Im}}
\newcommand{\Top}{\mathrm{Top}}
\newcommand{\Fgb}{F_{g,b}}
\newcommand{\Fgo}{F_{g,1}}
\newcommand{\Fgbk}{F_{g,b}^k}
\newcommand{\Finf}{F_{\infty}}
\newcommand{\Rinf}{\R^{\infty}}
\newcommand{\Csum}{C_\Sigma}
\newcommand{\Dsum}{D_\Sigma}
\newcommand{\Tsum}{D_\Sigma}
\newcommand{\diffM}{{\diff(M)}}
\newcommand{\diffFgo}{\diff(F_{g,1})}
\newcommand{\diffFgbk}{\diff(F_{g,b}^k)}
\newcommand{\diffFgb}{\diff(F_{g,b})}
\newcommand{\diffFinf}{\diff(F_\infty)}
\DeclareMathOperator\colim{colim}
\renewcommand{\paragraph}[1]{\vspace{4pt}\noindent\textbf{{#1}.}}
\title[Decoupling generalised configuration spaces on surfaces]{Decoupling generalised configuration spaces on surfaces}
\author{Luciana Basualdo Bonatto}
\email{basualdo@mpim-bonn.mpg.de}
\address{Max Planck Institute for Mathematics, Vivatsgasse 7, 53111 Bonn, Germany}
\date{\today}
\subjclass[2020]{57R19, 55R80, 55R40,  55P47}
\thanks{This material is based upon work supported by CNPq (201780/2017-8) and by the NSF Grant No. DMS-1928930 while the author participated in a program hosted by the MSRI in 2022.}
\begin{document}
\newpage

\begin{abstract}
The configuration space of $k$ points on a manifold carries an action of its diffeomorphism group. The homotopy quotient of this action is equivalent to the classifying space of diffeomorphisms of a punctured manifold, and therefore admits  results about homological stability. Inspired by the works of Segal, McDuff, Bodigheimer, and Salvatore, we look at generalised configuration spaces where particles have labels and even partially summable labels, in which points are allowed to collide whenever their labels are summable. These generalised configuration spaces also admit actions of the diffeomorphism group and we look at their homotopy quotients. Our main result is a decoupling theorem for these homotopy quotients on surfaces: in a range, their homology is completely described by the product of the moduli space of surfaces and a generalised configuration space of points in $\mathbb{R}^\infty$. 
Using this result, we show these spaces admit homological stability with respect to increasing the genus, and we identify the stable homology. This can be interpreted as an Diff-equivariant homological stability for factorization homology. In addition, we use this result to study the group completion of the monoid of moduli spaces of configurations on surfaces.
\end{abstract}
\maketitle

\vspace{-0.5ex}\section{Introduction}

The \emph{ordered configuration space} of $k$ points on a smooth manifold $M$ without boundary is defined as
    \[\widetilde{C}_k(M)\coloneqq\{(m_1,\dots,m_k)\in M^k\,\mid \,m_i\neq m_j \text{ if }i\neq j\}.\]
When $M$ is a smooth manifold with boundary, we denote by $\widetilde{C}_k(M)$ the space of ordered configurations of $k$ points in its interior. The symmetric group $\Sigma_k$ acts on this space by permuting the order of the $k$ points. The \emph{configuration space} of $k$ points on $M$, denoted $C_k(M)$, is the quotient $\widetilde{C}_k(M)/\Sigma_k$. We denote by $\diff(M)$ the group of diffeomorphisms of a manifold $M$ which fix a collar of its boundary. 

In this paper, we will focus on $2$-dimensional manifolds and we denote by $\Fgbk$ an orientable surface of genus $g$, $k$ punctures, and $b\geq 1$ boundary components.  The spaces $C_k(\Fgb)$ admit an action of the group $\diffFgb$, where a diffeomorphism $\phi$ acts by taking a collection of $k$ points to its image via $\phi$. Of interest here, is the Borel construction (homotopy quotient) of this action, denoted $C_k(\Fgb)\parallelsum \diffFgb$, to which we refer to as a \emph{moduli of configurations of $k$ points in $\Fgb$}. It is simple to show that
    \begin{align}\label{eq: relation of moduli of configurations to classifying space of punctured surfaces}
        C_k(\Fgb)\parallelsum \diffFgb\simeq B\diffFgbk
    \end{align}
and in fact this relation is not only true for surfaces, but for any manifold with $k$ punctures. In particular, this allows us to deduce homological stability results for these moduli of configurations of $k$ points, directly from the known theorems for classifying spaces of punctured surfaces. For instance, when $b\geq 1$ these spaces admit homological stability when increasing the genus and when increasing then number of points \cite{MR786348,MR1054572,RWMR3438379,MR3180616}. Moreover, it was shown in \cite{MR1851247} that the stable homology of this classifying space can be computed from the homology of $B\diffFgb\times B(\Sigma_k\wr \GLt)$, what is known as a decoupling theorem.

In this paper, we study the analogue of this moduli space for generalised configuration spaces. As a first case, we look at labelled configurations: given a pointed space $Z$, the space of \emph{configurations in $M$ with labels in $Z$} is the quotient 
    \[C(M;Z)\coloneqq\left(\coprod\limits_{k\geq 0}\widetilde{C}_k(M)\times_{\Sigma_k}Z^k\right)\bigg/\sim\]
under the relation $(m_1,\dots,m_k;z_1,\dots,z_k)\sim (m_1,\dots,m_{k-1};z_1,\dots,z_{k-1})$ if $z_k$ is the basepoint of $Z$. We can interpret this space geometrically by considering it as the space of particles in $M$ where each particle is labelled by an element of $Z$, and a particle is allowed to disappear if labelled by the basepoint. 

This space has been of interest since the 70's, appearing on the seminal works of May \cite{May1972} and Segal \cite{Segal}. It was noted that the space $C(\R^n;Z)$ can be given the structure of an ($A_\infty$-)monoid by taking multiplication to be roughly given by stacking configurations side by side \cite{Segal}. One of the main results about this space is what today is called a \emph{scanning map} $C(\R^n;Z)\to \Omega^n\Sigma^n Z$ which was shown by Segal to induce a weak-homotopy equivalence on group-completions.
This idea has been generalised in many directions. For instance, B\"odigheimer \cite{bodigheimer1987stable} proved an analogous statement for configurations on general manifolds. In addition, similar results were proven for the case where the spaces of labels has extra structure, such as (partial) multiplications \cite{mcduff1975configuration,Segal79,Guest95,Kallel01,Salvatore1999Configuration}. We discuss the later case in Section \ref{subsec: intro to partially summable configs}.

More recently this labelled configuration space and scanning map argument have been expanded to sophisticated constructions in factorization homology \cite{AyalaFrancisTanaka17} on the one hand and, on the other, in the form of configuration spaces of manifolds, has been used to compute the stable homology of the moduli spaces of Riemann surfaces \cite{madsen2007stable} and higher dimensional manifolds \cite{MR3718454,MR3665002}.

Labelled configuration spaces also inherit an action of the diffeomorphism group. Even more, if $Z$ is a pointed $\GLt$-space, we can define an action of $\diffFgb$ on $C(M;Z)$  where a diffeomorphism $\phi$ acts by taking a collection of $k$ points to its image via $\phi$, and the label $z$ of a point $w$ is taken to the label $d_w\phi\cdot z$ of the point $\phi(w)$. Unlike the case of configurations with a fixed number of points, $C(\Fgb;Z)\parallelsum \diffFgb$ is not in general equivalent to the classifying space of a diffeomorphism group. Hence we ask if it still has homological stability and if it admits an analogous decoupling theorem.

For a surface $\Fgb$, with $b\geq1$, taking the boundary connected sum with the surface $F_{1,1}$ induces a homomorphism $\diffFgb\to \diff(F_{g+1,b})$, given by extending a map on $\Fgb$ by the identity. We call this the stabilisation map and study what it induces on the moduli of configuration spaces:

\begin{MainThm}\label{mainthm: homological stability for moduli of labelled configurations}
    Let $Z$ be a pointed $\GLt$-space and $b\geq 1$. The stabilisation map on the Borel constructions
        \[s_*:C(\Fgb;Z)\parallelsum\diffFgb\to C(F_{g+1,b};Z)\parallelsum\diff(F_{g+1,b})\]
    induces a homology isomorphism in degrees $\leq \frac{2}{3}g$.
\end{MainThm}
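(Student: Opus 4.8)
The plan is to reduce the statement to homological stability for mapping class groups of surfaces with marked points --- which, with the slope-$\tfrac23$ range, is due to Boldsen and Randal--Williams --- by filtering $C(\Fgb;Z)$ by the number of points and handling the labels with elementary bundle theory. So I would first filter $C(\Fgb;Z)$ by the number of points carrying a non-basepoint label, letting $C(\Fgb;Z)^{\leq k}$ be the closed subspace of configurations with at most $k$ such points. This filtration is $\diffFgb$-equivariant and is preserved by the stabilisation map $s$ (which only adds a handle, no points); its $k$-th subquotient is $(\widetilde{C}_k(\Fgb))_+\wedge_{\Sigma_k}Z^{\wedge k}$, the quotient of $(\widetilde{C}_k(\Fgb))_+\wedge Z^{\wedge k}$ by the diagonal $\Sigma_k$-action, which still carries a $\diffFgb$-action. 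Applying $(-)\parallelsum\diffFgb$ yields a filtration of $C(\Fgb;Z)\parallelsum\diffFgb$ with $k$-th subquotient $\big((\widetilde{C}_k(\Fgb))_+\wedge_{\Sigma_k}Z^{\wedge k}\big)\parallelsum\diffFgb$, and $s$ induces a map of the resulting spectral sequences.

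Next I would identify these subquotients. By \eqref{eq: relation of moduli of configurations to classifying space of punctured surfaces}, $C_k(\Fgb)\parallelsum\diffFgb\simeq B\diffFgbk$, and the $k$-th subquotient above is a fibrewise-based $Z^{\wedge k}$-bundle over $B\diffFgbk$, twisted by the tangent planes at the $k$ punctures and by their permutation. Crucially this bundle is pulled back along a map $B\diffFgbk\to B(\GLt\wr\Sigma_k)$ that is independent of the genus, and since $\GLt\simeq\SO(2)$ is connected its action on $H_*(Z^{\wedge k})$ is trivial, so the only local system entering the homology of the subquotient is pulled back from $B\Sigma_k$ along the genus-independent permutation homomorphism. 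Homological stability for mapping class groups of surfaces with $k$ marked points, applied with these degree-zero coefficients, then gives that $s$ induces a homology isomorphism on each subquotient in degrees $\leq\tfrac23 g$, with a range independent of $k$.

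Then I would run the spectral-sequence comparison. When $Z$ is path-connected the $k$-th subquotient is $(k-1)$-connected, so both filtration spectral sequences converge, and the standard comparison argument (using also the epimorphism part of mapping-class-group stability one degree higher) promotes the uniform subquotient isomorphisms to the conclusion that $s_*$ is an isomorphism in degrees $\leq\tfrac23 g$. For a general pointed $\GLt$-space $Z$ one decomposes $C(\Fgb;Z)$ into its path components (a $\diffFgb$-set), argues fibrewise over $B\diffFgb$, treats the fixed finite collection of genuinely non-basepoint points exactly as the punctures above, and reduces the remaining basepoint-component labels to the path-connected case; this is bookkeeping and requires no new idea.

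The step I expect to be the real obstacle is the input on mapping class groups: one needs the slope-$\tfrac23$ stability range \emph{with a constant uniform in the number of marked points}. The marked-point, degree-zero-coefficient case should be accessible via the $E_2$-cell / splitting-complex methods of Galatius--Kupers--Randal-Williams --- the geometric point being that a finite set of interior points never obstructs the connectivity estimates for arc complexes on a surface, since generic arcs are disjoint from it --- but confirming that the slope-$\tfrac23$ improvement (rather than merely slope $\tfrac12$) survives the presence of marked points is delicate; this is one reason it may be cleaner to first establish, in a range, a decoupling equivalence $C(\Finf;Z)\parallelsum\diffFinf\simeq(\text{a labelled configuration space in }\Rinf)\times B\diffFinf$, which reduces the theorem directly to the slope-$\tfrac23$ range for the \emph{unpunctured} moduli space $B\diffFgb$ together with the genus-independence of the $\Rinf$-factor.
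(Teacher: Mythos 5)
Your approach is genuinely different from the paper's, although both exploit the same filtration of $C(\Fgb;Z)$ by the number of non-basepoint particles. You propose to prove stability directly on each filtration subquotient by invoking slope-$\frac{2}{3}$ homological stability for $B\diffFgbk$ with coefficients in a local system pulled back from $B\Sigma_k$, and then compare the spectral sequences of the two filtrations. The paper instead first proves the decoupling theorem (Theorem~\ref{thm: decoupling labelled configurations}), handling each filtration subquotient via Lemma~\ref{lemma:parametrised-decoupling}: by Palais' theorem the evaluation map $\varepsilon_k$ is a fibration with fibre $B\diff(F_{g,b+k})$, and Zeeman's comparison theorem applied to the resulting map of fibre sequences reduces everything to Harer--Boldsen--Randal-Williams stability with \emph{constant} coefficients for surfaces with $k$ extra \emph{boundary} components. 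Theorem~\ref{mainthm: homological stability for moduli of labelled configurations} then drops out of a three-out-of-four argument in a commutative square whose vertical arrows are decoupling maps and whose bottom row is controlled by Harer stability plus K\"unneth.

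The gap you flag is real, and it is exactly what the paper's route sidesteps. The input your plan needs---slope-$\frac{2}{3}$ stability for $B\diffFgbk$ with these twisted coefficients, uniformly in $k$---is not available off the shelf: the known slope-$\frac{2}{3}$ results are stated for constant coefficients. Your local system is indeed of polynomial degree zero (and the $\GLt^k$-part does act trivially on $H_*(Z^{\wedge k})$ since $\GLt$ is connected, as you observe), so one \emph{expects} no degradation of the range, but making that precise requires reopening the stability machinery rather than citing it. Trading $k$ marked points for $k$ extra boundary circles via Palais' fibration reduces to constant-coefficient stability for $B\diff(F_{g,b+k})$, where uniformity in $k$ is automatic. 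Notably, the alternative you sketch in your final paragraph---establish a decoupling equivalence first and then read off stability---is precisely the paper's strategy, so you have in effect rediscovered it as a fallback.
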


Moreover, we can determine precisely what the stable homology is:

\begin{MainThm}\label{mainthm: the stable homology for labelled configurations}
    Let $Z$ is a pointed connected $\GLt$-space. There is a map
        \[C(\Fgb;Z)\parallelsum\diffFgb\to \Omega^\infty \MTSO(2)\times \Omega^\infty \Sigma^\infty\left(E\GLt_+\pointedpushout{\GLt} Z\right)\]
    which is compatible with the stabilisation maps and induces a homology isomorphism in degrees $\leq \frac{2}{3}g$.
\end{MainThm}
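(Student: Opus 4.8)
The plan is to obtain Theorem~B as a formal consequence of three ingredients: the decoupling theorem that is the main result of this paper, the Madsen--Weiss theorem (in the Galatius--Madsen--Tillmann--Weiss form), and a $\GLt$-equivariant version of Segal's scanning theorem. First I would invoke the decoupling theorem, which produces, for every pointed $\GLt$-space $Z$ and every $g$, a natural map
\[
C(\Fgb;Z)\parallelsum\diffFgb \longrightarrow B\diffFgb \times C\!\left(\R^\infty;\, E\GLt_+\pointedpushout{\GLt} Z\right),
\]
compatible with the genus-stabilisation map $s$ (acting trivially on the second factor) and inducing a homology isomorphism in degrees $\leq \tfrac{2}{3}g$. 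It then suffices to identify, naturally and in the same range, the homology of each of the two factors on the right-hand side.

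For the moduli-of-surfaces factor $B\diffFgb$ I would combine homological stability for mapping class groups (which holds in degrees $\leq \tfrac23 g$) with the Madsen--Weiss theorem: the parametrised Pontryagin--Thom map $B\diffFgb \to \Omega^\infty\MTSO(2)$ is a homology isomorphism onto the path component it meets in degrees $\leq \tfrac23 g$, it is compatible with genus stabilisation, and since all components of $\Omega^\infty\MTSO(2)$ are homotopy equivalent this identifies $H_*(B\diffFgb)$ with $H_*(\Omega^\infty\MTSO(2))$ in the stated range. For the configuration factor I would use crucially that $Z$ is \emph{connected}: then the homotopy-orbit construction $E\GLt\times_{\GLt}Z$ is connected (it fibres over $B\GLt$ with fibre $Z$), hence so is the based space $W\coloneqq E\GLt_+\pointedpushout{\GLt} Z$; for a connected based space the free $E_\infty$-algebra $C(\R^\infty;W)$ has trivial monoid of path components, so Segal's group-completion/scanning theorem already provides a weak equivalence $C(\R^\infty;W)\xrightarrow{\ \simeq\ }\Omega^\infty\Sigma^\infty W$ rather than merely a homology equivalence after localisation.

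Composing the decoupling map with the product of these two maps then yields the asserted map
\[
C(\Fgb;Z)\parallelsum\diffFgb \longrightarrow \Omega^\infty\MTSO(2)\times\Omega^\infty\Sigma^\infty\!\left(E\GLt_+\pointedpushout{\GLt}Z\right),
\]
which is a homology isomorphism in degrees $\leq\tfrac23 g$; compatibility with the stabilisation maps is inherited from the naturality in $g$ of each of the three ingredients (the second factor being independent of $g$).

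I expect the genuine obstacle to lie entirely in the decoupling theorem on which this rests, not in the assembly above: the hard part is carrying out the $\GLt$-equivariant scanning argument in the presence of (partially summable) labels and correctly bookkeeping the tangential $\GLt$-structure carried by each particle, which is precisely what forces the label space of the $\R^\infty$-factor to be the twisted smash product $E\GLt_+\pointedpushout{\GLt}Z$ rather than $Z$ itself. By comparison, the passage from the decoupling theorem to Theorem~B requires only mild care: matching the stability range $\tfrac23 g$ appearing in the decoupling theorem with the range in which the Pontryagin--Thom map is a homology isomorphism, and checking that the connectedness hypothesis on $Z$ is exactly what upgrades the equivariant group completion to a weak equivalence on the configuration factor.
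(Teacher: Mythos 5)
Your proposal follows essentially the same route as the paper's proof of Corollary \ref{cor: homology iso on limit spaces - labelled}: compose the decoupling map of Theorem \ref{thm: decoupling labelled configurations} with the Madsen--Weiss identification of $B\diffFgb$ stably with $\Omega^\infty\MTSO(2)$ and with Segal's scanning equivalence $C(\R^\infty;W)\simeq\Omega^\infty\Sigma^\infty W$ applied to the connected label space $W=E\GLt_+\pointedpushout{\GLt}Z$. The paper packages this slightly differently --- stating the conclusion at the colimit $C(\Finf;Z)\parallelsum\diffFinf$ and recovering the fixed-$g$ range from Corollary \ref{cor: homological stability for moduli of labelled configurations} --- but the three ingredients and their roles are identical to yours.
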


In the above, $E\GLt$ denotes the total space of a universal fibration for $B\GLt$, we use $(-)_+$ to denote adjoining a disjoint basepoint to a space, and $-\pointedpushout{\GLt} -$ denotes the quotient of the smash product of pointed topological $\GLt$-spaces by the diagonal action of $\GLt$.

Both of the results above are consequences of Theorem \ref{mainthm: decoupling labelled configurations} below, which is an analogue of the decoupling theorem in \cite{MR1851247}. It implies that the stable homology of this moduli of configuration spaces can be understood through a decoupling map $\tau\times \varepsilon$, which separates the points in the configurations from the underlying surfaces. The map $\tau: C_k(\Fgb)\parallelsum \diffFgb\to B\diffFgb$ forgets the data of the configurations, and $\varepsilon$ forgets the underlying surface, but still remembers the points in the configuration and some local tangential data around them (for a detailed description of these maps see Section \ref{chap: configuration spaces}). 

\begin{MainThm}[Decoupling Theorem for Labelled Configurations]\label{mainthm: decoupling labelled configurations}
    Let $\tau$ and $\varepsilon$ be the maps described above. Then 
        \begin{align*}
            \tau\times\varepsilon:C(\Fgb;Z)\parallelsum \diffFgb \to B\diffFgb \times C(\mm{R}^\infty;E\GLt_+\pointedpushout{\GLt} Z)
        \end{align*}
    induces a homology isomorphism in degrees $\leq \frac{2}{3}g$.
\end{MainThm}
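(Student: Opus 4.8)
The plan is to prove this via a semi-simplicial resolution of the surface together with a "parametrised scanning" argument that transfers the known decoupling theorem for finite configurations (equation~\eqref{eq: relation of moduli of configurations to classifying space of punctured surfaces} combined with the decoupling theorem of \cite{MR1851247}) to the case of the full labelled configuration space $C(\Fgb;Z)$. First I would set up the comparison on each "stratum" of fixed cardinality: for a configuration of exactly $k$ points, $C_k(\Fgb)\parallelsum\diffFgb\simeq B\diffFgbk$, and the $k$-fold product of tangential data assembles into a map to $C_k(\mm{R}^\infty;E\GLt_+\pointedpushout{\GLt}Z)$. The decoupling theorem in \cite{MR1851247} handles $C_k(\Fgb)\parallelsum\diffFgb \to B\diffFgb\times B(\Sigma_k\wr\GLt)$; twisting the $\Sigma_k\wr\GLt$-side by the labels $Z$ upgrades $B(\Sigma_k\wr\GLt)$ to the $k$-th piece of $C(\mm{R}^\infty;E\GLt_+\pointedpushout{\GLt}Z)$, and one checks the stable range $\leq\tfrac23 g$ is uniform in $k$ (it is, since the homological-stability range for $B\diffFgbk$ in $g$ does not depend on $k$).

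The second, and harder, step is to pass from the stratumwise statement to the whole space. The labelled configuration space is a quotient $\bigl(\coprod_k \widetilde C_k(\Fgb)\times_{\Sigma_k}Z^k\bigr)/\!\sim$, and $\parallelsum\diffFgb$ commutes with this homotopy colimit-type construction, so $C(\Fgb;Z)\parallelsum\diffFgb$ carries a filtration whose filtration quotients are built from $C_k(\Fgb)\parallelsum\diffFgb$ with coefficients in the reduced label data $\overline Z{}^{\wedge k}_{h\Sigma_k}$ (relative configuration spaces of points, some of which are "at the basepoint"). The same filtration exists on the target $C(\mm{R}^\infty;E\GLt_+\pointedpushout{\GLt}Z)$ and, after the Borel construction, on $B\diffFgb\times C(\mm{R}^\infty;-)$. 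The decoupling map $\tau\times\varepsilon$ is filtration-preserving, so I would run a spectral-sequence (or a finite-induction-on-filtration-degree) comparison: on each associated graded piece the map is, up to the relative version, the stratumwise map from step one, hence a homology iso in degrees $\leq\tfrac23 g$ by step one applied with twisted coefficients. A five-lemma/Zeeman-comparison argument then yields the isomorphism on the whole space in the same range.

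The main technical obstacle is controlling the \emph{relative} configuration spaces and ensuring the stable range genuinely stays at $\tfrac23 g$ through the filtration — in particular, that increasing the number of "active" (non-basepoint) points does not erode the genus range, and that the scanning/stabilisation maps are compatible with both the surface stabilisation $s$ and the filtration. This is exactly where one needs the resolution of $\Fgb$ by arcs (as in \cite{RWMR3438379}) so that the genus range is insensitive to adding finitely many marked points; I would use that $B\diffFgbk \to B\diffFgb^{k+1}$ induces a homology iso in degrees $\leq\tfrac23 g$ uniformly in $k$, which feeds the relative statements. A secondary subtlety is identifying $\varepsilon$ precisely: one must check that the "local tangential data around the points" assembles to the $\GLt$-twisted labels $E\GLt_+\pointedpushout{\GLt}Z$ rather than just $Z$, which follows from the fact that a point in $\Fgb$ together with a tangent-space framing up to $\GLt$ contributes a free $\GLt$-orbit, i.e.\ a copy of $E\GLt$, smashed with its $Z$-label and quotiented diagonally. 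Once these pieces are in place, the passage to Theorems~\ref{mainthm: homological stability for moduli of labelled configurations} and~\ref{mainthm: the stable homology for labelled configurations} is formal: group-completion/scanning on $C(\mm{R}^\infty;-)$ identifies it with $\Omega^\infty\Sigma^\infty$ of the label spectrum, and $B\diffFgb$ stabilises to $\Omega^\infty\MTSO(2)$ by Madsen--Weiss \cite{madsen2007stable}.
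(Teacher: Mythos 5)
Your plan matches the paper's proof: you first establish the decoupling in the fixed-cardinality/twisted-coefficient setting (the paper's Lemma~\ref{lemma:parametrised-decoupling}, proved by a fibration and Zeeman comparison argument that reduces the general $\Sigma_k\wr\GLt$-space $X$ to the case $X=\ast$, then applies Palais' theorem and Harer stability for capping off punctures), and then pass to the whole labelled configuration space by filtering by the number of points and comparing filtration quotients via a five-lemma/Zeeman argument — exactly the filtration $C^{\leq k}$ used in the paper. Your description of the associated-graded pieces as a reduced $Z^{\wedge k}$-twist of the fixed-$k$ statement, and the observation that the $\leq\frac23 g$ range is uniform in $k$, both agree with the paper's argument, so this is essentially the same proof.
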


This result may be interpreted in physical terms: in the high genus limit, the constraints for the particles to stay on the underlying surface are lifted and the particles are now free. 

As a final application of Theorem \ref{mainthm: decoupling labelled configurations}, we look at monoids of moduli of configurations on surfaces. Boundary connected sum induces a multiplication on the level of classifying spaces 
making $\dcup{g}B\diffFgo$ into a topological monoid. This important construction and its group completion have been central in the study of the stable homology of mapping class groups of surfaces \cite{Miller86,Tillmann_surface_operad,madsen2007stable,MR2653727,MR3718454,MR3665002}. This  gluing of surfaces induces also a multiplication on the Borel constructions
    \[C(\Fgo;Z)\parallelsum\diffFgo\times C(F_{h,1};Z)\parallelsum\diff(F_{h,1})\to C(F_{g+h,1};Z)\parallelsum\diff(F_{g+h,1})\]
making $\dcup{g}C(\Fgo;Z)\parallelsum\diffFgo$ into a topological monoid. We study its group completion.

\begin{MainCor}\label{mainthm: group completion}
   For any pointed $\GLt$-space $Z$, the decoupling map induces a weak equivalences on group completions
        \[\Omega B\left(\dcup{g}C(\Fgo;Z)\parallelsum\diffFgo\right) \simeq \Omega B\left(\dcup{g}B\diffFgo\right)\times \Omega BC(\Rinf;E\GLt_+\pointedpushout{\GLt}Z).\]
\end{MainCor}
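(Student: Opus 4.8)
The plan is to realise the map of the statement as the effect of $\Omega B(-)$ on a map of homotopy‑commutative topological monoids, and then invoke the group‑completion theorem of McDuff and Segal, feeding in Theorem \ref{mainthm: decoupling labelled configurations}.

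First I would assemble a monoid map. Write $\mathcal{M}:=\dcup{g}C(\Fgo;Z)\parallelsum\diffFgo$ and $\mathcal{N}:=\dcup{g}B\diffFgo$, both topological monoids under boundary connected sum, and $\mathcal{C}:=C(\Rinf;E\GLt_+\pointedpushout{\GLt}Z)$ with the topological monoid structure given by juxtaposing configurations. Taking the decoupling maps of Theorem \ref{mainthm: decoupling labelled configurations} over all $g$ should produce a map of topological monoids $\Phi\colon\mathcal{M}\to\mathcal{N}\times\mathcal{C}$, where the target carries the product structure. On the $\tau$‑coordinate this is clear, since $\tau$ forgets the configuration and the two monoids glue surfaces in the same way; on the $\varepsilon$‑coordinate one must check that the configuration‑with‑local‑tangential‑data recorded by $\varepsilon$ of a boundary connected sum is the juxtaposition product of the two individual records, which should be immediate from the construction of $\varepsilon$ in Section \ref{chap: configuration spaces} (under boundary connected sum the two configurations occupy disjoint regions of the glued surface and their local tangential data are unchanged). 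Since $B(-)$ and $\Omega(-)$ preserve products, $\Omega B(\mathcal{N}\times\mathcal{C})\simeq\Omega B\mathcal{N}\times\Omega B\mathcal{C}$, so $\Omega B\Phi$ is exactly the map in the statement, and it remains to prove it is a weak equivalence.

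Both $\Omega B\mathcal{M}$ and $\Omega B(\mathcal{N}\times\mathcal{C})$ are loop spaces, hence have simple path‑components, so it is enough to show $\Omega B\Phi$ is a bijection on $\pi_0$ and an isomorphism on integral homology. For $\pi_0$: the functor $\pi_0\Omega B(-)$ sends a topological monoid to the Grothendieck group of its $\pi_0$, and $\pi_0\Phi$ is an isomorphism of monoids because the $H_0$‑part of Theorem \ref{mainthm: decoupling labelled configurations} (the range $0\le\tfrac23 g$ holds for every $g\ge0$) says that in each genus the decoupling map is a bijection on components. For homology, the group‑completion theorem gives $H_*(\Omega B\mathcal{M};\Z)\cong H_*(\mathcal{M};\Z)[\pi_0\mathcal{M}^{-1}]$ and likewise for $\mathcal{N}\times\mathcal{C}$, with $\Omega B\Phi$ inducing the localisation of $H_*(\Phi)$. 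Localisation is exact and can be performed one class at a time, so I would first invert the class $\gamma\in\pi_0\mathcal{M}$ of the genus‑one surface with empty configuration: multiplication by $\gamma$ is the genus stabilisation map $s$ on $\mathcal{M}$, its image $\Phi(\gamma)$ is the pair (genus‑one surface, basepoint of $\mathcal{C}$), so multiplication by $\Phi(\gamma)$ on $\mathcal{N}\times\mathcal{C}$ is $s\times\id_{\mathcal{C}}$, and $\Phi$ intertwines these (the decoupling map commutes with stabilisation — this is precisely what yields Theorem \ref{mainthm: homological stability for moduli of labelled configurations}). Hence $H_*(\Phi)[\gamma^{-1}]$ is built, componentwise, from $\colim_g$ of the decoupling maps $H_*\big(C(\Fgo;Z)\parallelsum\diffFgo\big)\to H_*\big(B\diffFgo\times\mathcal{C}\big)$, which is an isomorphism because in each fixed degree $d$ the $g$‑th such map is an isomorphism once $g\ge\tfrac32 d$ by Theorem \ref{mainthm: decoupling labelled configurations} and homology commutes with sequential colimits. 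Localising this isomorphism further at the remaining classes of $\pi_0\mathcal{M}\cong\pi_0(\mathcal{N}\times\mathcal{C})$ — which on the target only act on the $\mathcal{C}$‑factor — then gives that $H_*(\Omega B\Phi)$ is an isomorphism, completing the argument.

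The only step carrying real content is the first: verifying that $\tau\times\varepsilon$ is multiplicative for the product structure on $\mathcal{N}\times\mathcal{C}$ and compatible with stabilisation, and, more prosaically, that all three objects can be modelled by honest homotopy‑commutative topological monoids, so that the group‑completion theorem and the description of $\pi_0$ and $H_*$ of $\Omega B(-)$ apply. Once that is in place, the reduction to $\pi_0$ together with the localisation computation is routine.
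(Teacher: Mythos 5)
Your proposal follows the same strategy the paper uses: show the decoupling map is a map of homotopy-commutative topological monoids (Lemma \ref{lemma:group completion argument}), invoke the McDuff--Segal group-completion theorem to reduce to the genus-stabilisation telescope, and conclude via Theorem \ref{thm: decoupling labelled configurations} together with Corollary \ref{cor: homological stability for moduli of labelled configurations}. The paper leaves the localisation at the remaining $\pi_0$-classes and the $\pi_0$-bijection implicit, so your explicit treatment of those steps is a welcome expansion rather than a different argument.
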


\subsection{Configuration spaces with partially summable labels}\label{subsec: intro to partially summable configs}

The main result of this paper considers a more general type of configuration spaces with labels in a framed partial $2$-monoid, where particles are allowed to collide if their labels are summable. This space has been explored in works such as \cite{mcduff1975configuration,Segal79,Guest95,Kallel01,Salvatore1999Configuration} and, when the labels are $E_2$-algebras (not partial), is equivalent to factorization homology \cite{MR3431668} and topological chiral homology \cite{MR2555928}. The first example of this construction can be seen in McDuff's configuration spaces of positive and negative particles \cite{mcduff1975configuration}, where particles are labelled by ``charges" and are allowed to collide whenever their charges are opposite. More generally, Salvatore \cite{Salvatore1999Configuration} defines partial $2$-monoids, which are, in essence, spaces with a multiplication similar to an $E_2$-algebra structure, but with the restriction that this multiplication does not need to be defined for every tuple of elements (see Definition \ref{def:partial d-monoid}). Whenever $P$ is equipped with a compatible action of $\GLt$, we call it a framed partial monoid, and we can define the space of configurations in $\Fgb$ with partially summable labels in $P$, denoted $\Csum(\Fgb;P)$. The definition $\Csum(\Fgb;P)$ requires much more machinery then the case for non-summable labels, such as the Fulton-MacPherson operad, and yields more complicated models for configuration spaces. We discuss these constructions in Section \ref{sec:partial d-monoids}. As before, these generalised configuration spaces admit an action of the diffeomorphism group and we study its Borel construction.

\begin{MainThm}\label{mainthm: homological stability for moduli of summable configurations}
    Let $P$ be a framed partial $2$-monoid and $b\geq 1$. The stabilisation map on the Borel constructions
        \[s_*:\Csum(\Fgb;P)\parallelsum\diffFgb\to \Csum(F_{g+1,b};P)\parallelsum\diff(F_{g+1,b})\]
    induces a homology isomorphism in degrees $\leq \frac{2}{3}g$.
\end{MainThm}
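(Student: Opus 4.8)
The plan is to deduce this statement from the decoupling theorem for partially summable labels — the analogue of Theorem~\ref{mainthm: decoupling labelled configurations} in which the pointed $\GLt$-space is replaced by the framed partial $2$-monoid $P$ — which is the main theorem of the paper and which I take as given here. That theorem produces, for every $g$, a decoupling map
\[
\tau\times\varepsilon\colon \Csum(\Fgb;P)\parallelsum\diffFgb \longrightarrow B\diffFgb\times\Csum(\Rinf;L(P)),
\]
where $L(P)$ is the framed partial $2$-monoid induced from $P$ by $E\GLt$ (playing the role of $E\GLt_+\pointedpushout{\GLt}Z$ in the labelled case), and it asserts that $\tau\times\varepsilon$ is a homology isomorphism in degrees $\leq\frac{2}{3}g$. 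The first step is to check that $\tau\times\varepsilon$ is natural with respect to the stabilisation map: gluing on the handle $F_{1,1}$ and extending diffeomorphisms by the identity neither creates nor moves any particles, so it is compatible both with the configuration-forgetting map $\tau$ and with the surface-forgetting map $\varepsilon$. This produces a square commuting up to homotopy
\[
\begin{CD}
\Csum(\Fgb;P)\parallelsum\diffFgb @>{\tau\times\varepsilon}>> B\diffFgb\times\Csum(\Rinf;L(P))\\
@V{s_*}VV @VV{(Bs)\times\id}V\\
\Csum(F_{g+1,b};P)\parallelsum\diff(F_{g+1,b}) @>{\tau\times\varepsilon}>> B\diff(F_{g+1,b})\times\Csum(\Rinf;L(P))
\end{CD}
\]
in which the top map is a homology isomorphism in degrees $\leq\frac{2}{3}g$ and the bottom map in degrees $\leq\frac{2}{3}(g+1)$.

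Next I would analyse the right-hand vertical map. Its second coordinate is the identity of $\Csum(\Rinf;L(P))$, since neither $\Rinf$ nor $L(P)$ depends on $g$; its first coordinate is the classical genus-stabilisation map $Bs\colon B\diffFgb\to B\diff(F_{g+1,b})$, which by homological stability for mapping class groups of surfaces with non-empty boundary \cite{RWMR3438379,MR3180616} is a homology isomorphism in degrees $\leq\frac{2}{3}g$ (and an epimorphism one degree higher). By the K\"unneth theorem, $(Bs)\times\id$ is therefore a homology isomorphism in degrees $\leq\frac{2}{3}g$ as well.

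A diagram chase in the square now finishes the argument: for each $n\leq\frac{2}{3}g$ the two horizontal maps and the right vertical map are isomorphisms on $H_n$, hence so is $s_*$. All the real content sits in the decoupling theorem; granting it, the only steps needing genuine care are the naturality of $\tau\times\varepsilon$ with respect to stabilisation and the bookkeeping of the stability ranges through the K\"unneth isomorphism. I expect the naturality statement to be the one slightly delicate point: in the point-set models for $\Csum$ built from the Fulton--MacPherson operad one must realise ``extend a diffeomorphism by the identity on the glued-on handle'' as an honest map of these models and check on the nose that it is intertwined by $\tau$ and $\varepsilon$, rather than merely up to a homotopy one would then have to control.
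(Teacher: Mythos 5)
Your proposal follows the same route as the paper: use the decoupling theorem for summable labels to reduce genus-stability for $\Csum(\Fgb;P)\parallelsum\diffFgb$ to Harer-type stability for $B\diffFgb$ plus K\"unneth, via a commutative square in which the right vertical map is the classical stabilisation map on $B\diffFgb$ crossed with the identity. One imprecision: the decoupling theorem the paper actually proves (Theorem~\ref{main thm: decoupling summable labels}) does not land in $B\diffFgb\times\Csum(\Rinf;L(P))$ as you posit; it factors through the semi-simplicial disc model, giving a weak equivalence $\Csum(\Fgb;P)\parallelsum\diffFgb\simeq|\Tsum(\Fgb;P)_\bullet|\parallelsum\diffFgb$ followed by a map to $B\diffFgb\times|\Tsum^2(\Rinf;P)_\bullet|$, and the paper explicitly leaves the identification of $|\Tsum^2(\Rinf;P)_\bullet|$ with a configuration space in $\Rinf$ with summable labels as a conjecture for future work. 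This does not affect your argument, since all you use about the second factor is that it is independent of $g$, which $|\Tsum^2(\Rinf;P)_\bullet|$ is. You are also right to flag naturality of the decoupling map with respect to stabilisation as the one delicate point; the paper handles it (for both the labelled and summable cases) by replacing $\Emb(\Fgb,\Rinf)$ with the model $\Emb^\partial(\Fgb,(-\infty,0]\times\Rinf)$ for $E\diffFgb$, which makes "extend by the identity on the glued handle" an honest, strictly commuting map on the nose.
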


While homological stability for configurations with summable labels with respect to increasing the number of points had been studied in \cite{MR3464033}, the above result is the first to look at stability with respect to increasing the genus. Theorem \ref{mainthm: homological stability for moduli of summable configurations} can be interpreted as a $\diff$-equivariant homological stability result for factorisation homology.

As in the case for labelled configurations, this result is a consequence of a decoupling theorem for the space $\Csum(\Fgb;P)\parallelsum\diffFgb$, which is the main result of this paper. This is much more intricate than the decoupling for labelled configurations. The proof uses a semi-simplicial resolution of $\Csum(\Fgb;P)$ developed in section \ref{sec:poset-surrounded-config}, which we refer to as the \emph{disc model for configurations}, denoted $|\Tsum(\Fgb;P)_\bullet|$ (Proposition \ref{prop: weak equivalence with poset one}). This model makes explicit the connection between these spaces and factorization homology. In the decoupling context, we naturally encounter an analogue of this space with $2$-dimensional discs with configurations embedded in $\Rinf$, we denote this space $|\Tsum^2(\Rinf;P)_\bullet|$ (Definition \ref{def:thin-discs}). Using the Decoupling Theorem for Labelled Configurations (Theorem \ref{mainthm: group completion}) we then prove:

\begin{MainThm}[Decoupling Theorem for Configurations with Summable Labels]\label{main thm: decoupling summable labels}
    For $P$ a framed partial $2$-monoid, there is a weak equivalence $\Csum(\Fgb;P)\parallelsum\diffFgb\simeq |\Tsum(\Fgb;P)_\bullet|\parallelsum\diffFgb$ and the decoupling map 
        \[|\Tsum(\Fgb;P)_\bullet|\parallelsum\diffFgb\to B\diffFgb\times |\Tsum^2(\Rinf;P)_\bullet|.\]
    induces a homology isomorphism in degrees $\leq\frac{2}{3}g$.
\end{MainThm}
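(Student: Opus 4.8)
The plan is to reduce to the disc model of $\Csum(\Fgb;P)$, apply the Decoupling Theorem for Labelled Configurations (Theorem~\ref{mainthm: decoupling labelled configurations}) one semi-simplicial level at a time, and then assemble the levels by a spectral-sequence comparison. First I would invoke Proposition~\ref{prop: weak equivalence with poset one}, which provides a $\diffFgb$-equivariant weak equivalence $\Csum(\Fgb;P)\simeq|\Tsum(\Fgb;P)_\bullet|$. Since the Borel construction $-\parallelsum\diffFgb$ takes $\diffFgb$-equivariant weak equivalences to weak equivalences and commutes with geometric realisation, this already yields the first assertion $\Csum(\Fgb;P)\parallelsum\diffFgb\simeq|\Tsum(\Fgb;P)_\bullet|\parallelsum\diffFgb\simeq\bigl|\,[p]\mapsto\Tsum(\Fgb;P)_p\parallelsum\diffFgb\,\bigr|$, and reduces the theorem to understanding the decoupling map on this bisimplicial object.

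The second and central step is to analyse a single semi-simplicial level. A $p$-simplex of $\Tsum(\Fgb;P)_\bullet$ records a configuration of disjoint parametrised $2$-discs in $\Fgb$ each carrying the disc-model data of the partially summable $P$-labels from Section~\ref{sec:poset-surrounded-config}; shrinking the outer discs to their centres should identify $\Tsum(\Fgb;P)_p$, up to $\diffFgb$-equivariant weak equivalence, with a labelled configuration space $C(\Fgb;B_p)$ for a pointed $\GLt$-space $B_p$ depending only on $P$ ($\GLt$ acting through the disc parametrisations, basepoint the empty configuration), and by the definition of the thin-disc model (Definition~\ref{def:thin-discs}) the same move identifies $\Tsum^2(\Rinf;P)_p$ with $C(\Rinf;E\GLt_+\pointedpushout{\GLt}B_p)$. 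Granting this, Theorem~\ref{mainthm: decoupling labelled configurations} applied with $Z=B_p$ shows that
\[
\tau\times\varepsilon\colon\ C(\Fgb;B_p)\parallelsum\diffFgb\ \longrightarrow\ B\diffFgb\times C\bigl(\Rinf;E\GLt_+\pointedpushout{\GLt}B_p\bigr)
\]
is a homology isomorphism in degrees $\leq\frac{2}{3}g$, with a range that does not depend on $p$.

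The third step is to assemble the levels. Because $\tau$ forgets the configuration and $\varepsilon$ forgets the surface, both are natural in the label space, and the face maps of $\Tsum(\Fgb;P)_\bullet$ only affect the $B_\bullet$-variable, so the level-wise maps fit into a map of semi-simplicial spaces $\Tsum(\Fgb;P)_\bullet\parallelsum\diffFgb\to(\mathrm{const}\,B\diffFgb)\times\Tsum^2(\Rinf;P)_\bullet$; realising it, and using that geometric realisation commutes with the product by the constant space $B\diffFgb$, recovers the decoupling map of the statement. Now compare the spectral sequences of the skeletal filtrations of the two realisations, with $E^1_{p,q}$ the degree-$q$ homology of the $p$-th level: the previous step gives isomorphisms on $E^1_{p,q}$ for all $q\leq\frac{2}{3}g$, and since any contribution to $H_n$ with $n\leq\frac{2}{3}g$ sits in a bidegree $(p,q)$ with $q\leq n\leq\frac{2}{3}g$ while every differential into or out of such a bidegree keeps its second coordinate $\leq\frac{2}{3}g$, the comparison is an isomorphism on $E^\infty$ in total degree $\leq\frac{2}{3}g$ and hence on $H_n$ for $n\leq\frac{2}{3}g$.

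I expect the main obstacle to be the identification in the second step: describing precisely the pointed $\GLt$-spaces $B_p$ and the $\diffFgb$-equivariant equivalences $\Tsum(\Fgb;P)_p\simeq C(\Fgb;B_p)$, in particular matching the local tangential ($\GLt$-)data carried by the thin discs with the factor $E\GLt_+\pointedpushout{\GLt}(-)$ produced by Theorem~\ref{mainthm: decoupling labelled configurations}, and then checking that the face maps of the disc model really are compatible with the forgetting maps $\tau$ and $\varepsilon$, so that an honest semi-simplicial map exists. By contrast the spectral-sequence comparison is routine; the only point needing care there is that no homological range is lost, as indicated above.
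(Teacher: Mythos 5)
Your outline is essentially the paper's argument: pass to the disc model via Proposition~\ref{prop: weak equivalence with poset one}, identify each level with a labelled configuration space, apply the labelled decoupling theorem level-wise, and assemble with the skeletal-filtration spectral sequence. The label space you call $B_p$ is what the paper names $\mmm{Z}_p$, the subspace of $\Dsum(\R^2;P)_p$ consisting of chains whose outermost embedding is the identity, and the paper establishes a $\diffFgb$-equivariant \emph{homeomorphism} $\Dsum(\Fgb;P)_p\cong\Tub(\Fgb;\mmm{Z}_p)$ (not merely a weak equivalence), so that Lemma~\ref{lemma: tubular configurations} and Lemma~\ref{lemma: thin tubular configurations} supply the comparisons with $C(\Fgb;\mmm{Z}_p)$ and $C(\Rinf;E\GLt_+\pointedpushout{\GLt}\mmm{Z}_p)$. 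One imprecision worth flagging: the claim that the face maps of $\Tsum(\Fgb;P)_\bullet$ ``only affect the $B_\bullet$-variable'' fails for $d_0$, which drops the outermost disc and hence reparametrises the tubular embedding as well as the label. The paper sidesteps this compatibility check entirely by defining the decoupling map $\dcps$ at the level of posets (before taking nerves), so it is automatically a map of semi-simplicial spaces; the level-wise identifications with $C(\Fgb;\mmm{Z}_p)$ are used only to verify the level-wise homology isomorphism, not to build the map. If you instead build the map level by level, you must verify directly that $\tau\times\varepsilon$ commutes with $d_0$, which is true because the underlying configuration $e_i(\xi_i)$ and its tangential data are independent of the chosen outer disc, but it is not an instance of naturality in a label space.
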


In future work we will discuss the homotopy type of the space $|\Tsum^2(\Rinf;P)_\bullet|$ and its description as an infinite loop space. We conjecture that it is also equivalent to a configuration in $\Rinf$ with partially summable labels. 

Analogous to the case of labelled configurations, the spaces $\Csum(\Fgo;P)\parallelsum\diffFgo$ assemble into a topological monoid, and the decoupling theorem descends into its group completion.

\begin{MainCor}\label{mainthm: group completion summable}
    For any path-connected framed partial $2$-monoid with unit $P$, the decoupling map induce a homotopy equivalence
        \[\Omega B(\dcup{g}\Csum(\Fgo;P)\parallelsum\diffFgo)\simeq \Omega B(\dcup{g}B\diffFgo)\times 
        \Omega B(|\Tsum^2(\Rinf;P)_\bullet|).\]
\end{MainCor}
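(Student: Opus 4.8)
The plan is to run the argument that proves Corollary~\ref{mainthm: group completion}, replacing the Decoupling Theorem for Labelled Configurations by the Decoupling Theorem for Configurations with Summable Labels (Theorem~\ref{main thm: decoupling summable labels}) and feeding the result into the group-completion theorem of McDuff and Segal. First I would replace the monoid $\dcup{g}\Csum(\Fgo;P)\parallelsum\diffFgo$ by the weakly equivalent monoid $\dcup{g}|\Tsum(\Fgo;P)_\bullet|\parallelsum\diffFgo$ coming from the first half of Theorem~\ref{main thm: decoupling summable labels}, checking that this equivalence is multiplicative so that it survives the functor $\Omega B(-)$. Next one equips $|\Tsum^2(\Rinf;P)_\bullet|$ with the \emph{stacking} monoid structure obtained by placing two configurations in disjoint affine half-spaces of $\Rinf$, and checks that the decoupling map $|\Tsum(\Fgo;P)_\bullet|\parallelsum\diffFgo\to B\diffFgo\times|\Tsum^2(\Rinf;P)_\bullet|$ is a homomorphism of topological monoids, or at least an $A_\infty$-map, which suffices to apply $\Omega B(-)$: the surface coordinate $\tau$ is multiplicative by the definition of boundary connected sum, and the configuration coordinate $\varepsilon$ is multiplicative because gluing two surfaces along a boundary arc corresponds, after scanning into $\Rinf$, to stacking the two $\Rinf$-configurations side by side. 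Applying $\Omega B(-)$ and using $B(X\times Y)\simeq BX\times BY$ then produces precisely the map in the statement, and it remains to prove it is an equivalence.

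All the spaces in sight are group-like $H$-spaces (loop spaces), so each is simple on every path component and any two components of a fixed one are homotopy equivalent; hence it is enough to produce a bijection on $\pi_0$ and an isomorphism on homology. The source monoid has $\pi_0=\N$, indexed by the genus, and so does the target: $\dcup{g}B\diffFgo$ contributes $\N$, while $|\Tsum^2(\Rinf;P)_\bullet|$ is path-connected because $P$ is path-connected with a unit, since any configuration in $\Rinf$ can be contracted by scaling its labels to the unit and then deleting the unit-labelled discs, exactly as a basepoint-labelled particle disappears in the unlabelled model. Since the decoupling map is the identity on the genus, it induces $\N\cong\N$, and hence $\Z\cong\Z$ after group completion.

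For the homology isomorphism I would invoke the group-completion theorem. Both $\dcup{g}\Csum(\Fgo;P)\parallelsum\diffFgo$ and $\dcup{g}B\diffFgo$ are homotopy-commutative with $\pi_0$ the free commutative monoid on the genus-$1$ class, and multiplication by that class is, by construction, the stabilisation map; so on a fixed path component one gets $H_*(\Omega B(\dcup{g}\Csum(\Fgo;P)\parallelsum\diffFgo))\cong\colim_g H_*(\Csum(\Fgo;P)\parallelsum\diffFgo)$ and $H_*(\Omega B(\dcup{g}B\diffFgo))\cong\colim_g H_*(B\diffFgo)$, colimits along the stabilisation maps, while $|\Tsum^2(\Rinf;P)_\bullet|$ is already group-like so that $\Omega B(|\Tsum^2(\Rinf;P)_\bullet|)\simeq|\Tsum^2(\Rinf;P)_\bullet|$. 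Under these identifications, and using that the decoupling maps are compatible with stabilisation, the induced map on one component becomes $\colim_g$ of the decoupling maps $\Csum(\Fgo;P)\parallelsum\diffFgo\to B\diffFgo\times|\Tsum^2(\Rinf;P)_\bullet|$; since each of these is a homology isomorphism in degrees $\leq\frac{2}{3}g$ by Theorem~\ref{main thm: decoupling summable labels}, homology commutes with filtered colimits, and $\colim_g(B\diffFgo\times X)\simeq(\colim_g B\diffFgo)\times X$, the colimit is a homology isomorphism in all degrees. By $H$-space translation this is a homology isomorphism on the whole spaces, and together with the $\pi_0$-bijection it gives a weak equivalence, hence a homotopy equivalence since all the spaces involved have the homotopy type of CW complexes.

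The main obstacle is the bookkeeping in the first step: setting up the stacking monoid structure on $|\Tsum^2(\Rinf;P)_\bullet|$, checking that the disc model $|\Tsum(\Fgo;P)_\bullet|\parallelsum\diffFgo$ inherits a monoid structure compatible with the one on $\Csum(\Fgo;P)\parallelsum\diffFgo$, and verifying the (coherent) multiplicativity of $\tau\times\varepsilon$. Once that is in place the group-completion input is routine, the only point requiring care being that multiplication by the genus-$1$ class really coincides with $s_*$ on homology.
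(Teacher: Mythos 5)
Your proposal is correct and follows essentially the same route as the paper's own proof: pass to the disc-model monoid $\dcup{g}|\Tsum(\Fgo;P)_\bullet|\parallelsum\diffFgo$ (which the paper sets up via partially ordered topological monoids and Lemma \ref{lemma: monoid on geom realisation}), check the two maps out of it are monoidal, apply the McDuff--Segal group-completion theorem to reduce to the limit spaces, and invoke Theorem \ref{thm: decoupling summable configurations} together with Corollary \ref{cor: homological stability for moduli of summable configurations}. The only extra content you add — the observation that $|\Tsum^2(\Rinf;P)_\bullet|$ is path-connected (hence group-like, so that $\Omega B$ does not change its homotopy type) — is correct under the path-connectedness hypothesis on $P$, but the paper does not rely on it; it only uses that the stabilisation map acts trivially on that factor of the limit.
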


\subsection{Outline of the paper}
We start by recalling in Section \ref{section:preliminaries} background results which will be used throughout the paper, especially on Section \ref{chap: configuration spaces summable labels}. This can be skipped and referred back to when necessary.

In Section \ref{chap: configuration spaces} we introduce labelled configuration spaces and prove Theorem \ref{mainthm: decoupling labelled configurations}. Using this, we deduce Theorems \ref{mainthm: homological stability for moduli of labelled configurations} and \ref{mainthm: the stable homology for labelled configurations}, and Corollary \ref{mainthm: group completion}.

In Section \ref{chap: configuration spaces summable labels} we discuss the case of configurations with summable labels, and prove the main results of the paper. We start by recalling in \ref{sec:partial d-monoids} the definitions of framed partial $d$-monoids and configuration spaces with partially summable labels. We then construct semi-simplicial resolutions for these spaces in Section \ref{sec:poset-surrounded-config}. In Section \ref{sec:decoupling map for colliding configurations}, we use this disc model together with the Decoupling Theorem for Labelled Configurations (Theorem \ref{mainthm: group completion}) to prove Theorem \ref{main thm: decoupling summable labels}. Finally, we use this result to deduce Theorem \ref{mainthm: homological stability for moduli of summable configurations} and Corollary \ref{mainthm: group completion summable}.

\subsection*{Acknowledgements} I would like to thank Ulrike Tillmann for suggesting the problem and for the many insightful conversations. In addition, I would like to thank  David Ayala, Christopher Douglas, Jan Steinebrunner, and Nathalie Wahl for the helpful discussions and comments. 

\section{Preliminaries}\label{section:preliminaries}

In this section we recall techniques and results on semi-simplicial spaces, which will be used in Section \ref{chap: configuration spaces summable labels}. The reader may skip this part and refer back to when necessary. For a detailed exposition of the concepts in this section see \cite{Ebert_2019}.

A \emph{semi-simplicial space} is a functor $\Delta_{inj}^{op}\to \Top$, where $\Delta_{inj}$ is the category with object the linearly ordered sets $[p]=\{0<\dots<p\}$ and morphisms the injective monotone maps. We denote such a functor by $X_\bullet$ and write $X_p=X_\bullet(\{1,\dots,p\})$. The datum of a semi-simplicial space is equivalent to the collection of spaces $X_p$, $p\geq 0$, together with \emph{face maps} $d_i:X_p\to X_{p-1}$ for $i=0,\dots,p$, satisfying $d_id_j=d_{j-1}d_i$ if $i<j$.

Denote by $\Delta^p$ the \emph{standard $p$-simplex}
    \[\Delta^p=\left\{(t_0,\dots,t_p)\in\R^{p+1}\Big|\sum\limits_{i=1}^p t_i=1 \text{ and } t_i\geq0 \text{ for all }i\right\}.\]
To each morphism $\phi:[p]\to[q]$ in $\Delta_{inj}$, there is a continuous map $\phi_*:\Delta^p\to \Delta^q$ such that $\phi_*(t_0,\dots,t_p)=(s_0,\dots,s_q)$ with $s_j=\sum_{i\in\phi^{-1}(j)}t_i$.
The \emph{geometric realisation} of a semi-simplicial space $X_\bullet$ is the quotient space 
    \[|X_\bullet|\coloneqq \left(\coprod\limits_{p}X_p\times\Delta^p\right)\bigg/\sim\]
where $(x,\phi_* t)\sim (\phi^* x,t)$, and $\phi$ is a morphism of $\Delta_{inj}$.

\subsection{Semi-simplicial nerve of a poset}\label{subsec: semi-simplicial nerve of a poset}
Any topological poset $(Q,<)$ defines a semi-simplicial space $Q_\bullet$ by setting $Q_p$ to be the subspace of tuples $(q_0<\dots<q_p)\in Q^{p+1}$, and face maps 
    \begin{align*}
        d_i:Q_p &\longrightarrow Q_{p-1} & \text{for }0\leq i\leq p\\
        (q_0<\dots<{q_i}<\dots<q_p)&\longmapsto (q_0<\dots<q_{i-1}<q_{i+1}<\dots<q_p).&
    \end{align*}
We refer to $Q_\bullet$ as the \emph{semi-simplicial nerve} of the poset $Q$.

Given a topological poset $(Q,<)$, the space $Q\times Q$ can be equipped with a partial order where $(q_1,q_2)<(q_1',q_2')$ if $q_i<q_i'$ and $q_j\leq q_j'$, for $\{i,j\}=\{1,2\}$. We say that a pointed such $Q$ is a \emph{partially ordered topological monoid} if it is equipped with a multiplication
    \[-\cdot-:Q\times Q\to Q\]
which is strictly associative, unital and order preserving. In this case, the geometric realisation of the semi-simplicial nerve $Q_\bullet$ is naturally endowed with a multiplication $\cdot$ defined by
        \begin{align*}
            \big((q_0<\dots<q_m;t_0,\dots,t_m)\cdot (\overline{q_0}<\dots<\overline{q_k};\overline{t}_0,\dots,\overline{t}_k)\big) = \\
            = (q_0\cdot \overline{q_0}<\dots<q_0\cdot \overline{q_k}<\dots<q_m\cdot \overline{q_0}\dots<q_m\cdot \overline{q_k};
            t_0\cdot \overline{t},\dots,t_m \cdot \overline{t})
        \end{align*}
where  $t_i\cdot \overline{t}=t_i\overline{t_0},\dots,t_i\overline{t_k}$, for all $i=0,\dots,m$. It is straightforward to verify that this is well-defined.

\begin{lemma}\label{lemma: monoid on geom realisation}
    For $(Q,<,\mu)$ a partially ordered topological monoid,  $(|Q_\bullet|,|\mu|)$ is a topological monoid. Moreover, any map of partially ordered topological monoids $f:Q\to Q'$ induces a map of topological monoids
        \[f_*:|Q_\bullet|\to |Q_\bullet'|.\]
\end{lemma}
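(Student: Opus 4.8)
The plan is to verify directly that the multiplication $|\mu|$ on $|Q_\bullet|$ inherited from the formula in the text is continuous, associative, and unital, and then check functoriality.

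\textbf{Well-definedness and continuity.} First I would check that the formula for $\cdot$ on $\coprod_p Q_p\times\Delta^p$ descends to the quotient $|Q_\bullet|$. This is the ``straightforward to verify'' claim just above the lemma, so I would simply observe that it follows because deleting a vertex $q_i$ (resp.\ $\overline{q_j}$) from one factor corresponds, after applying the product formula, to deleting the block of vertices $q_i\cdot\overline{q_0}<\dots<q_i\cdot\overline{q_k}$ (resp.\ the vertices $q_l\cdot\overline{q_j}$) together with the reindexing of barycentric coordinates that matches $\phi_*$; the monotonicity of $\mu$ guarantees the resulting tuple is still strictly ordered, so it lands in $Q_{(m+1)(k+1)-1}$. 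Continuity of the induced map $|Q_\bullet|\times|Q_\bullet|\to|Q_\bullet|$ follows since the map on each piece $Q_m\times\Delta^m\times Q_k\times\Delta^k\to Q_{(m+1)(k+1)-1}\times\Delta^{(m+1)(k+1)-1}$ is built from $\mu$, projections, and the continuous coordinate operations $t_i\cdot\overline t$, hence continuous, and geometric realisation of semi-simplicial spaces commutes with finite products (or, more elementarily, the quotient maps are open enough that the induced map is continuous).

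\textbf{Associativity and unit.} For associativity I would compare $(x\cdot y)\cdot z$ and $x\cdot(y\cdot z)$ for $x\in Q_m\times\Delta^m$, $y\in Q_k\times\Delta^k$, $z\in Q_n\times\Delta^n$: both produce the tuple of vertices $q_a\cdot\overline{q}_b\cdot\widetilde{q}_c$ indexed lexicographically by $(a,b,c)$, with barycentric coordinate $t_a\overline t_b\widetilde t_c$, using strict associativity of $\mu$ on $Q$; the two bracketings differ only by the associativity isomorphism $\Delta^{m}\times\Delta^{k}\times\Delta^n$-indexing, which is the identity. The unit is the point $e\in Q_0\times\Delta^0$ where $e$ is the unit of $\mu$: multiplying on either side by $(e;1)$ sends $(q_0<\dots<q_m;t_0,\dots,t_m)$ to $(e\cdot q_0<\dots<e\cdot q_m;t_0\cdot 1,\dots)=(q_0<\dots<q_m;t_0,\dots,t_m)$, using that $e$ is a strict unit. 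Hence $(|Q_\bullet|,|\mu|)$ is a topological monoid.

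\textbf{Functoriality.} Given a map of partially ordered topological monoids $f\colon Q\to Q'$ — meaning $f$ is continuous, order-preserving, and a monoid homomorphism — it induces levelwise maps $f\colon Q_p\to Q'_p$ by $(q_0<\dots<q_p)\mapsto(f(q_0)<\dots<f(q_p))$ (well-defined since $f$ is order-preserving), commuting with the face maps $d_i$, hence a semi-simplicial map and thus a continuous map $f_*\colon|Q_\bullet|\to|Q'_\bullet|$ on realisations. That $f_*$ is a monoid map is immediate from the product formula together with $f(q\cdot\overline q)=f(q)\cdot f(\overline q)$ and the fact that $f_*$ leaves barycentric coordinates untouched; it sends the unit $e$ to the unit $f(e)=e'$. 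I expect the only mildly delicate point to be the continuity of $|\mu|$ — specifically justifying that the product of quotient maps is again a quotient map in this setting — which is handled by the standard fact that geometric realisation of semi-simplicial spaces preserves finite products (the relevant $\Delta^m\times\Delta^k$ triangulation argument), everything else being a direct unwinding of the definitions.
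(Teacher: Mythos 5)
Your well-definedness step contains a genuine gap, and it is inherited from the displayed formula above the lemma. You claim that ``the monotonicity of $\mu$ guarantees the resulting tuple is still strictly ordered, so it lands in $Q_{(m+1)(k+1)-1}$''. That is false: the $(m+1)(k+1)$ pairs $(q_i,\overline{q}_j)$ listed lexicographically do \emph{not} form a chain in the product order on $Q\times Q$, so order-preservation of $\mu$ gives no information across blocks. Concretely, to order $q_0\cdot\overline{q}_k$ before $q_1\cdot\overline{q}_0$ you would need to compare $(q_0,\overline{q}_k)$ with $(q_1,\overline{q}_0)$, but $q_0<q_1$ while $\overline{q}_0<\overline{q}_k$, so these two pairs are simply incomparable. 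For an explicit failure take $Q=\mathbb{N}\times\mathbb{N}$ with the product order and coordinatewise addition, $q_0=(0,0)$, $q_1=(0,1)$, $\overline{q}_0=(0,0)$, $\overline{q}_1=(1,0)$: then $q_0\cdot\overline{q}_1=(1,0)$ and $q_1\cdot\overline{q}_0=(0,1)$ are incomparable. The same problem occurs in the paper's own application to the poset of surrounded configurations, so the lexicographic $(m+1)(k+1)$-tuple cannot literally be the definition of $|\mu|$.

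The lemma is nevertheless correct, but the product has to be built from the shuffle decomposition of $\Delta^m\times\Delta^k$, not from the quadratic map $(t,\overline{t})\mapsto(t_i\overline{t}_j)\in\Delta^{(m+1)(k+1)-1}$. A point of $\Delta^m\times\Delta^k$ lies in a top simplex of the standard triangulation, indexed by a monotone lattice path $(a_0,b_0)<\dots<(a_{m+k},b_{m+k})$ from $(0,0)$ to $(m,k)$ in $[m]\times[k]$; consecutive steps increment exactly one coordinate, so the pairs $(q_{a_r},\overline{q}_{b_r})$ \emph{do} form a chain in $Q\times Q$, and applying $\mu$ yields a genuine $(m+k)$-simplex $q_{a_0}\cdot\overline{q}_{b_0}<\dots<q_{a_{m+k}}\cdot\overline{q}_{b_{m+k}}$ of $Q_\bullet$, with the barycentric coordinates of $(t,\overline{t})$ inside that simplex. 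This is precisely the Eilenberg--Zilber shuffle map $|Q_\bullet|\times|Q_\bullet|\to|(Q\times Q)_\bullet|$ followed by $|\mu_\bullet|$. You do mention the $\Delta^m\times\Delta^k$ triangulation, but only as an aside about continuity; it must be the core of the construction, replacing the lexicographic formula entirely. (A smaller inaccuracy: fat semi-simplicial realisation does \emph{not} preserve finite products in general; what exists is the one-directional shuffle map just described, and that is all you need.) Once the product is set up this way, your arguments for associativity, unit, and the monoidality of $f_*$ go through in essentially the form you wrote them, with associativity now resting on the compatibility of shuffles for triple products $\Delta^m\times\Delta^k\times\Delta^n$.
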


The proof is a straightforward computation and follows directly from the definitions.

\subsection{Spectral sequence}\label{subseq: spectral sequence}
We quickly recall below the spectral sequence defined in \cite[Proposition 5.1]{segal1968classifying} associated to a semi-simplicial space, which is the key for the homology argument used in the proof of Theorem \ref{thm: decoupling summable configurations} (see \cite[Section 1.4]{Ebert_2019} for a detailed discussion).

For any semi-simplicial space $X_\bullet$, the geometric realisation $|X_\bullet|$ admits a filtration by its skeleta, with $|X_\bullet|^{(0)}=X_0$ and
    \[|X_\bullet|^{(q)}=|X_\bullet|^{(q-1)}\cup_{X_q\times\partial\Delta^q} X_q\times \Delta^q.\]
This filtration yields a spectral sequence 
    \[E^1_{p,q}=H_{p+q}(|X_\bullet|^{(q)},|X_\bullet|^{(q-1)})\Longrightarrow H_{p+q}(|X_\bullet|)\]
and by excision and the Kunneth isomorphism, the left-hand term can be re-written to give a spectral sequence with 
    \[E^1_{p,q}\cong H_{p}(X_q)\Longrightarrow H_{p+q}(|X_\bullet|).\]
Therefore a map of semi-simplicial spaces inducing a level-wise homology isomorphism gives an isomorphism of the first pages of the respective spectral sequences, and therefore a homology isomorphism between the geometric realisations.

\subsection{Semi-simplicial Resolutions}

In the proof of the decoupling we will use a semi-simplicial resolution of the spaces of configurations with summable labels. Showing that we indeed have a resolution will be a direct application of \cite[Theorem 6.2]{MR3207759}. For completeness, we quickly recall the statement of this result to clarify the conditions that will be checked in the proof of Proposition \ref{prop: weak equivalence with poset one}. We follow the notation and definitions of \cite{MR3207759}.

An \emph{augmented semi-simplicial space}\index{semi-simplicial space!augmented} is a triple  $(X_\bullet,X_{-1},\varepsilon_\bullet)$, where $X_\bullet$ is a semi-simplicial space, $X_{-1}$ is a space and $\varepsilon_\bullet$ is a collection of continuous maps $\varepsilon_p:X_p\to X_{-1}$ satisfying $d_i\varepsilon_p=\varepsilon_{p-1}$ for all $p\geq 0$ and all face maps $d_i$. We also say that $\varepsilon_\bullet:X_\bullet\to X_{-1}$  is an \emph{augmentation} for $X_\bullet$. It is simple to verify that an augmentation induces a continuous map $|\varepsilon_\bullet|:|X_\bullet|\to X_{-1}$.

\begin{definition}\label{def: topological flag complex}
    An \emph{augmented topological flag complex} \cite[Definition 6.1]{MR3207759} is an augmented semi-simplicial space $\varepsilon:X_\bullet\to X_{-1}$ such that
    \begin{enumerate}[(i)]
        \item The map $X_n\to X_0\times_{X_{-1}}\dots \times_{X_{-1}} X_0$ taking an $n$-simplex to its $(n+1)$ vertices is a homeomorphism onto its image, which is an open subset.
        \item A tuple $(v_0,\dots,v_n)\in X_0\times_{X_{-1}}\dots \times_{X_{-1}} X_0$ lies in $X_n$ if and only if $(v_i,v_j)\in X_1$ for all $i<j$.
    \end{enumerate}
\end{definition}
    
In other words, in an augmented topological flag complex, the space of $n$-simplices can be described as an open subspace of the $(n+1)$-tuples of vertices with the same image under $\varepsilon$, and such a tuple forms an $n$-simplex if and only if the pairs of vertices are all $1$-simplices. The result below is a criterion to determine when an augmented topological flag complex $X_\bullet\to X_{-1}$ induces a weak equivalence $|X_\bullet|\to X_{-1}$.

\begin{theorem}[\cite{MR3207759}, Theorem 6.2]\label{thm:augmented semi-simplicial spaces and realisation}
    Let $X_\bullet\to X_{-1}$ be an augmented topological flag complex. Suppose that
        \begin{enumerate}[(i)]
            \item The map $\varepsilon:X_0\to X_{-1}$ has local lifts of any map from a disc, i.e. given a map $f:D^n\to X_{-1}$, a point $p\in\varepsilon^{-1}(f(x))$, there is an open neighbourhood $U\subset D^n$ of $x$ and a map $F:U\to X_0$ such that $\varepsilon\circ F=f|_U$ and $F(x)=p$.
            \item $\varepsilon:X_0\to X_{-1}$ is surjective.
            \item For any $p\in X_{-1}$ and any non-empty finite set $\{v_1,\dots,v_n\}\in \varepsilon^{-1}(p)$ there exists a $v\in\varepsilon^{-1}(p)$ with $(v_1,v)\in X_{1}$ for all $i$.
        \end{enumerate}  
    Then $|X_\bullet|\to X_{-1}$ is a weak homotopy equivalence.
\end{theorem}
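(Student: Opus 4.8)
The plan is to show that the augmentation $|\varepsilon_\bullet|\colon|X_\bullet|\to X_{-1}$ satisfies, up to homotopy, the relative lifting property against every inclusion $\partial D^n\hookrightarrow D^n$; by a standard criterion this is equivalent to being a weak homotopy equivalence. So one fixes $g\colon D^n\to X_{-1}$ together with a lift $\hat g_0\colon\partial D^n\to|X_\bullet|$ of $g|_{\partial D^n}$, and must produce $\hat g\colon D^n\to|X_\bullet|$ with $\varepsilon_\bullet\circ\hat g=g$ and $\hat g|_{\partial D^n}$ homotopic to $\hat g_0$ (the two usual half-statements in degrees $n$ and $n-1$ are packaged this way). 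The case $n=0$ already exhibits the roles of the hypotheses: surjectivity on $\pi_0$ is condition (ii), while injectivity on $\pi_0$ follows from (i) and (iii), since two vertices lying over points in the same path-component of $X_{-1}$ can be joined by a path in $X_0$ (lifting a path in $X_{-1}$ piecewise, using (i)) followed by a zig-zag of $1$-simplices through a common neighbour supplied by (iii).

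\textbf{Step 1: normalising the boundary datum.} The compact set $\hat g_0(\partial D^n)$ lies in a finite skeleton $|X_\bullet|^{(N)}$. Using that $X_\bullet$ is a topological flag complex — so spanning a simplex is an \emph{open} condition on a tuple of vertices, and a tuple spans a simplex as soon as all of its pairs do — together with condition (i) to lift $g$ locally into $X_0$, one performs a standard approximation step: after replacing the lifting problem by a homotopic one, there is a triangulation $T$ of $D^n$, fine enough that $g$ carries each closed simplex into an open subset of $X_{-1}$ over which $\varepsilon\colon X_0\to X_{-1}$ admits a section, and such that $\hat g_0$ is simplicial on $T|_{\partial D^n}$, i.e.\ induced by a vertex map to $X_0$ lifting $g$ whose edges land in $X_1$.

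\textbf{Step 2: extending the lift over the interior.} This is the heart of the argument. One inducts over the skeleta of $T$ relative to $\partial D^n$. Suppose the lift is defined on $T^{(j-1)}\cup\partial D^n$, carried by a finite family of vertex-sections over the cells (all lifting $g$) together with their simplex-compatibilities. Given a new $j$-cell $\sigma$ whose boundary is already lifted, one first extends the finitely many relevant boundary vertex-sections across $\sigma$ — a lifting–extension problem solvable for $\sigma$ small using condition (i) — and then adjoins a new vertex-section $w\colon\sigma\to X_0$ lifting $g|_\sigma$ which is adjacent in $X_\bullet$ to all of them; coning (equivalently, subdividing) the boundary lift onto $w$ extends the lift over $\sigma$, the tuples $(w,v_{i_0},\dots,v_{i_m})$ spanning simplices precisely because $X_\bullet$ is flag. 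The existence of $w$ \emph{pointwise} over $\sigma$ is exactly condition (iii); its existence as a \emph{continuous} section over $\sigma$ (after possibly subdividing $\sigma$) again rests on condition (i), applied to the family of common neighbours, together with the openness of $X_1\subset X_0\times_{X_{-1}}X_0$, which makes "being adjacent to the finitely many prescribed sections" an open, hence locally maintainable, condition. Exhausting $T$ yields $\hat g$.

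\textbf{Main obstacle.} The combinatorial skeleton of the proof is soft: a flag complex in which every finite set of vertices has a common neighbour is weakly contractible (a sphere maps into the star of any common neighbour of its image vertices), so lifts can be built by iterated coning. All of the genuine work is the continuity in Step 2 — one must choose not a single common neighbour but a continuously varying family of them over each cell, compatibly with the choices already made on its boundary — and this is what forces the repeated subdivisions and the bookkeeping with condition (i) and with the openness clause in the definition of a topological flag complex. The same difficulty can be repackaged as the two assertions that $|\varepsilon_\bullet|$ is a Serre microfibration (openness of adjacency plus (i)) and that its fibres $|\varepsilon_\bullet^{-1}(p)|$ — realisations of the topological flag complexes $X^p_k=\varepsilon_k^{-1}(p)$, which inherit hypotheses (ii) and (iii) — are weakly contractible, after which a lemma of Weiss (a Serre microfibration with weakly contractible fibres is a weak equivalence) finishes the proof; but establishing fibrewise weak contractibility still requires essentially the same coning-with-continuity argument, so this merely relocates the hard step. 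In either presentation, this is precisely the content of the proof of \cite[Theorem 6.2]{MR3207759}.
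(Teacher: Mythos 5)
This result is not proved in the paper at all: it is quoted verbatim as \cite[Theorem 6.2]{MR3207759}, explicitly ``for completeness,'' so that the reader can see the three hypotheses that will be verified in the proof of Proposition~\ref{prop: weak equivalence with poset one}. There is therefore no internal proof to compare against, and the right benchmark is the source.

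Against that benchmark, your sketch correctly identifies the structure of the argument, and especially the last paragraph — Serre microfibration (via condition (i) together with the openness clause in Definition~\ref{def: topological flag complex}) plus weakly contractible fibres (via conditions (ii) and (iii)) plus the Weiss lemma — is the actual skeleton of the proof in \cite{MR3207759}. Two small pushbacks on your presentation. First, the claim that the microfibration packaging ``merely relocates the hard step'' undersells it: separating the problem into a microfibration statement and a fibrewise statement is a genuine simplification, since the fibre over a fixed $p\in X_{-1}$ is an unaugmented flag complex and one no longer needs the common-neighbour choice to vary continuously over a base disc, only over the fibre itself. Second, your direct Step 2 has a real gap as written: condition~(i) produces a local section of $\varepsilon$ near a point, but it does not by itself produce an extension of an \emph{already-prescribed} vertex-section on $\partial\sigma$ to a section on $\sigma$ agreeing with it, and ``after possibly subdividing'' does not resolve the mismatch with the lift already fixed on the boundary. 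This is exactly the kind of bookkeeping the microfibration route is designed to avoid, and it is why \cite{MR3207759} proceeds that way rather than by direct skeletal induction. Since you flag the difficulty and defer to the source, the sketch is honest, but as a self-contained argument Step 2 would need to be reworked along the microfibration lines you describe.
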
 
\section{Decoupling Labelled Configuration Spaces}\label{chap: configuration spaces}

In this section, we recall the definition of configuration spaces with labels and introduce a model for the homotopy quotients $C(\Fgb;Z)\parallelsum \diff(\Fgb)$. With this, we construct the decoupling map of Theorem \ref{mainthm: decoupling labelled configurations}. We then prove this result as Theorem \ref{thm: decoupling labelled configurations} and use it to prove Theorems \ref{mainthm: homological stability for moduli of labelled configurations} and \ref{mainthm: the stable homology for labelled configurations}, and Corollary \ref{mainthm: group completion}.
These are, respectively, Corollary \ref{cor: homological stability for moduli of labelled configurations}, Corollary \ref{cor: homology iso on limit spaces - labelled}, and Corollary \ref{thm:group-completion} below.

\begin{definition}
    Let $M$ be a manifold and $Z$ be a well-pointed space.
    The \emph{configuration space of $M$ with labels in $Z$}, denoted $C(M;Z)$, is the quotient
        \[\dcup{k\geq 0} \widetilde{C}_k(M)\pushout{\Sigma_k}Z^k/\sim\]
    where $\widetilde{C}_k(M)$ denotes the ordered configuration space of $k$ points in $M$, and 
        \[(m_1,\dots,m_k;z_1,\dots,z_k)\sim(m_1,\dots,m_{k-1};z_1,\dots,z_{k-1})\]
    whenever $z_k$ is the basepoint of $Z$. 
\end{definition}

If $Z$ is a pointed $\GLt$-space, and $M$ is a surface $\Fgb$, with $b\geq 1$, the space $C(\Fgb;Z)$ carries a natural action by the diffeomorphism group of $\Fgb$: for $\phi\in \diffFgb$
    \[\phi\cdot (m_1,\dots,m_k;z_1,\dots,z_k)\coloneqq(\phi(m_1),\dots,\phi(m_k);D_{m_1}\phi\cdot z_1,\dots,D_{m_1}\phi\cdot z_k).\]
The basepoint relation is preserved by this action as $Z$ is a pointed $\GLt$-space. The decoupling result is about the homotopy quotient of this action, that is, the Borel construction $C(\Fgb;Z)\parallelsum \diffFgb$. 

From now on, we denote by $\Fgb$ a fixed orientable surface of genus $g$, $b\geq 1$ boundary components, and pick once and for all a framing on $\Fgb$, that is, a section $s$ of the frame bundle $Fr(T\Fgb)\to \Fgb$. We fix now a model for $E\diffFgb$ that will be used to construct the decoupling map of Theorem \ref{thm: decoupling labelled configurations}. Let $\emb(\Fgb,\Rinf)$ denote $\colim\limits_{n\to \infty}\emb(\Fgb,\R^n)$. This space has a free action of $\diffFgb$ by precomposition and by \cite{MR613004} the quotient map $\emb(\Fgb,\Rinf)\to\emb(\Fgb,\Rinf)/\diffFgb$ has slices, hence it is a principal $\diffFgb$-bundle. Moreover, by Whitney's embedding theorem, $\emb(\Fgb,\Rinf)$ is weakly contractible and therefore it is a model for $E\diffFgb$. We then let
    \[C(\Fgb;Z)\parallelsum \diffFgb \simeq \emb(\Fgb,\Rinf)\underset{\diff(\Fgb)}{\times} C(\Fgb;Z).\]
Analogously, we will take the model for $B\diffFgb$ given by 
    \[B\diffFgb \simeq \emb(\Fgb,\Rinf)/\diff(\Fgb).\]
This can be interpreted as the space of abstract submanifolds of $\Rinf$ which are diffeomorphic to $\Fgb$, but without a fixed diffeomorphism. Analogously, a point in $C(\Fgb;Z)\parallelsum \diffFgb$ consists of one such abstract manifold, together with a labelled configuration.

The decoupling map will be the product of two maps 
    \begin{align}\label{eq: forgetful and evaluation maps}
        \tau:C(\Fgb;Z)\parallelsum \diffFgb &\to B\diffFgb\\
        \varepsilon: C(\Fgb;Z)\parallelsum \diffFgb &\to C(\mm{R}^\infty;E\GLt_+\pointedpushout{\GLt} Z).
    \end{align}
Recall that $E\GLt$ is the total space of a universal fibration for $B\GLt$, we use $(-)_+$ to denote adjoining a disjoint basepoint to a space, and $-\pointedpushout{\GLt} -$ denotes the quotient of the smash product of pointed topological $\GLt$-spaces by the diagonal action of $\GLt$.

Intuitively, the map $\tau$ forgets the configuration, while $\varepsilon$ forgets the underlying surface, but remembers the labelled configuration together with data on their tangent space on the submanifold.

In details, $\tau$ is the classifying map for the homotopy quotient, and it is simply the one induced by the projection $\emb(\Fgb,\Rinf){\times} C_k(\Fgb) \to \emb(\Fgb,\Rinf)$.

To define $\varepsilon$, we take as model for $B\GLt$ the oriented Grassmanian manifold of $2$-dimensional oriented subsbaces of $\Rinf$,  $Gr^+(2,\infty)$, and let $E\GLt$ denote the total space of the universal $\GLt$-bundle over it. Then using the identification $Fr(T\Rinf)\cong \Rinf\times E\GLt$, an embedding $e:\Fgb\hookrightarrow\Rinf$ induces a map $e_*:Fr(T\Fgb)\to ESO(2)$ from the bundle of framings on $T\Fgb$, taking a basis of $T_p\Fgb$ to its image via $D_p e$. The map $\varepsilon$ takes a point represented by a labelled configuration $[m_1,\dots,m_k;z_1,\dots,z_k]$ and an embedding $e:\Fgb\hookrightarrow\Rinf$ to the labelled configuration in $\Rinf$ given by
    \[[e(m_1),\dots,e(m_k);[e_*(s(m_1)),z_1],\dots, [e_*(s(m_k)),z_k]]\]
where $s(p)$ denotes the chosen oriented frame on $p\in\Fgb$. It is simple to verify that this indeed defines a continuous function to the configuration space $C(\mm{R}^\infty;(E\GLt)_+\pointedpushout{\GLt} Z)$.

\begin{theorem}\label{thm: decoupling labelled configurations}
    Let $\tau$ and $\varepsilon$ be the maps in \eqref{eq: forgetful and evaluation maps}. Then the decoupling map
        \begin{align*}
        \tau\times\varepsilon:C(\Fgb;Z)\parallelsum \diffFgb \to B\diffFgb \times C(\mm{R}^\infty;(E\GLt)_+\pointedpushout{\GLt} Z)
        \end{align*}
    induces a homology isomorphism in degrees $\leq \frac{2}{3}g$.
\end{theorem}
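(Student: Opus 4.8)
The plan is to deduce the decoupling statement from the group-completion theorem for the monoid $\dcup{g} B\diffFgo$ together with the known stable-homology results for mapping class groups (the Madsen--Weiss theorem, in the form of Galatius--Randal-Williams), by recognising both sides as the genus-stable homology of a suitable scanning/parametrised-configuration setup. Concretely, I would first reduce to the case $b=1$ by the standard argument that capping off boundary components is a homology isomorphism in the stable range, and then pass to the limit $g\to\infty$, where the two sides become $C(\Finf;Z)\parallelsum\diffFinf$ and $B\diffFinf\times C(\Rinf; E\GLt_+\pointedpushout{\GLt}Z)$. So it suffices to (a) show $\tau\times\varepsilon$ is a homology isomorphism after taking $g\to\infty$, and (b) invoke a homological stability statement to propagate this back down to the range $\leq\frac{2}{3}g$. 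For (b), note that the source has homological stability because $C(\Fgo;Z)\parallelsum\diffFgo\simeq\colim_k B\diffFgok$ is a colimit of classifying spaces of punctured mapping class groups (using \eqref{eq: relation of moduli of configurations to classifying space of punctured surfaces} and that $C(\Fgb;Z)$ is built from the $\widetilde{C}_k(\Fgb)$ with labels), and genus stability holds uniformly in the number of punctures \cite{RWMR3438379}; the target has homological stability trivially in the $B\diffFgo$ factor, and the slope $\frac{2}{3}$ is exactly the Randal-Williams--Wahl slope. The compatibility of $\tau\times\varepsilon$ with stabilisation maps is built into the construction (extending embeddings and diffeomorphisms by the identity), so a map of stable ranges argument gives (b) once (a) is known.

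For step (a), the idea is to use the parametrised surface bundle / Pontryagin--Thom perspective. Write $\diffFgo$-bundles over a base $X$ as maps $X\to B\diffFgo$; a point of $C(\Fgo;Z)\parallelsum\diffFgo$ is a surface $W\subset\Rinf$ together with a $Z$-labelled configuration on $W$ carrying tangential data. Taking $g\to\infty$ and group-completing, the monoid $\dcup{g} C(\Fgo;Z)\parallelsum\diffFgo$ has, by Corollary \ref{mainthm: group completion} (which I am allowed to use as it is stated earlier — though of course in the actual logical order this theorem is the input, via Theorem \ref{mainthm: decoupling labelled configurations}, so I must be careful to instead prove the colimit statement directly here). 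Let me restructure: I would prove the $g\to\infty$ statement by a scanning argument. The space $C(\Finf;Z)\parallelsum\diffFinf$ is, by a theorem of the Madsen--Weiss/Galatius--Madsen--Tillmann--Weiss type applied to the tangential structure "2-dimensional with a $Z$-labelled configuration", the base point component of an infinite loop space of a Thom spectrum; concretely the relevant tangential structure is $\theta\colon \Rinf\times (E\GLt_+\pointedpushout{\GLt}Z) \cup E\GLt \to BO(2)$ — a configuration of $Z$-labelled points on an oriented surface — whose Thom spectrum splits as $\MTSO(2)\vee \Sigma^\infty(E\GLt_+\pointedpushout{\GLt}Z)$ (a configuration of points being, after scanning, a $0$-dimensional contribution that desuspends to the labelled space, wedged onto the $\MTSO(2)$ coming from the surface itself with no points). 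Then $\Omega^\infty$ of this wedge is $\Omega^\infty\MTSO(2)\times\Omega^\infty\Sigma^\infty(E\GLt_+\pointedpushout{\GLt}Z)$, and the second factor is $\mathbb{Z}\times C(\Rinf;E\GLt_+\pointedpushout{\GLt}Z)$-like, i.e. the group completion of $C(\Rinf; E\GLt_+\pointedpushout{\GLt}Z)$ by the Segal--McDuff scanning theorem \cite{Segal}. Matching the two product decompositions identifies $\tau$ with projection to $\Omega^\infty\MTSO(2)\simeq_{g} B\diffFinf$ and $\varepsilon$ with the scanning map to the configuration factor.

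The cleanest route, and the one I would actually write, sidesteps the full Madsen--Weiss machinery: model $C(\Fgo;Z)\parallelsum\diffFgo$ as a space of $(W,\xi)$ with $W\subset\Rinf$ and $\xi$ a labelled configuration; there is a fibration sequence, up to group completion, with fibre $C(\Finf;Z)$-worth of configuration data sitting over $B\diffFinf$, and scanning on the fibre trivialises the bundle in the stable range because $C(\R^2;-)$ scanned gives a space whose monodromy under the mapping class group is, after stabilisation, homologically trivial — this is the content that makes "decoupling" work and is the genuine heart of the matter. Concretely: $\varepsilon$ restricted to a fibre of $\tau$ is the scanning map $C(W;Z\text{-framed})\to C(\Rinf;E\GLt_+\pointedpushout{\GLt}Z)$ for a fixed large-genus $W$, which is a homology iso onto its image/group-completion in a range by B\"odigheimer's generalisation \cite{bodigheimer1987stable} of Segal scanning to arbitrary manifolds; and the point is that the induced local system on $B\diffFinf$ is homologically trivial in the stable range, which I would deduce from the fact that the $\GLt$-action data has been absorbed into $E\GLt_+\pointedpushout{\GLt}Z$ so the fibre transport is through the action of $\pi_1 B\diffFinf$ on something already Borel-constructed to be equivariantly constant. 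The Serre spectral sequence of $\tau$ then collapses in the stable range to give $H_*(C(\Fgo;Z)\parallelsum\diffFgo)\cong H_*(B\diffFgo)\otimes H_*(C(\Rinf;E\GLt_+\pointedpushout{\GLt}Z))$ compatibly with $\tau\times\varepsilon$.

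\textbf{Main obstacle.} The hard part is the homological triviality of the local system, i.e. showing that the monodromy action of the mapping class group $\pi_0\diffFgo$ on $H_*$ of the fibre of $\tau$ becomes trivial in degrees $\leq\frac{2}{3}g$. This is exactly where genus must be spent: one needs that after scanning, the action of a Dehn twist (or any mapping class) on the scanned configuration space is homologically trivial through the stable range, which I expect to follow from a "support shifting" argument — any homology class in the fibre is supported in a subsurface of bounded genus, and a mapping class can be isotoped to be supported away from it, hence acts trivially, provided there is enough room, i.e. provided the total genus exceeds roughly $\tfrac{3}{2}$ times the homological degree. Making this quantitative, with the precise slope $\tfrac{2}{3}$, is the technical crux, and in the actual paper this is presumably where the semi-simplicial resolution / stability machinery of Randal-Williams--Wahl is invoked rather than a hands-on support argument.
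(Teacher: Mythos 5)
The paper's proof of Theorem~\ref{thm: decoupling labelled configurations} is quite different from, and substantially simpler than, your proposal. It filters both source and target by the number of points in the configuration, so that the successive subquotients become
\[
X_k/X_{k-1}=(E\diffFgb)_+\pointedpushout{\diffFgb}\widetilde{C}_k(\Fgb)_+\pointedpushout{\Sigma_k}Z^{\wedge k},
\qquad
Y_k/Y_{k-1}=(B\diffFgb)_+\wedge\widetilde{C}_k(\Rinf)_+\pointedpushout{\Sigma_k}\bigl((E\GLt)_+\pointedpushout{\GLt}Z\bigr)^{\wedge k},
\]
and then shows the induced map on each subquotient is a homology isomorphism in the stated range by reducing, via Lemma~\ref{lemma:parametrised-decoupling}, to the classical B\"odigheimer--Tillmann decoupling for a \emph{fixed} number $k$ of points (which in turn is proved by Palais' fibration theorem plus Harer stability). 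There is no scanning, no Madsen--Weiss input, and no analysis of monodromy; those appear only in the later corollaries identifying the stable homology. The filtration argument is the key technique you do not find.

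Beyond the methodological difference there are two genuine gaps in your proposal. First, the homological stability you want as input to step~(b) is precisely Corollary~\ref{cor: homological stability for moduli of labelled configurations}, which the paper \emph{deduces} from the decoupling theorem; so using it as an input is circular, and your attempt to escape this by claiming $C(\Fgo;Z)\parallelsum\diffFgo\simeq\colim_k B\diffFgok$ is false. The identification $C_k(\Fgb)\parallelsum\diffFgb\simeq B\diffFgbk$ in~\eqref{eq: relation of moduli of configurations to classifying space of punctured surfaces} holds only for a fixed number $k$ of unlabelled points; the labelled space $C(\Fgb;Z)$ is a quotient in which points disappear when labelled by the basepoint, and its filtration quotients are the smash products displayed above, not classifying spaces of mapping class groups. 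Second, your step~(a) --- a Madsen--Weiss-type theorem for surfaces decorated by a $Z$-labelled configuration, with a claimed wedge splitting of the resulting Thom spectrum --- is a substantial construction in its own right (a labelled configuration is not a tangential structure in the usual sense), and as written just moves the problem elsewhere rather than solving it. The ``support shifting'' argument you gesture at in the final paragraph for trivialising the local system is morally the content of decoupling, so that route also threatens to assume what is to be proved; the paper avoids having to prove any such monodromy-triviality statement directly by doing everything one $k$ at a time.
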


The proof of the result above will build upon a decoupling result for unlabelled configurations with a fixed number of points, which was first proved by \cite{MR1851247} and generalised in \cite{MR2439464,bonatto_2022}. We show here a slight generalisation of the result which we will need in the proof. The space $C(M;Z)$ is constructed as a quotient of the union of spaces $\widetilde{C}_k(M)\times_{\Sigma_k}Z^k$, and the Lemma below is about the decoupling map in each of these components. 

In fact, we will work on a slightly more general context which will be more convenient for the proof: let $X$ be a well-pointed space with an action of the wreath product $\Sigma_k\wr \GLt$ (in the context above we were using $X=Z^k$). The space $\widetilde{C}_k(\Fgo)\times X$ comes equipped with two actions: $\Sigma_k$ acts diagonally by permuting the points in the configuration and by the action on $X$, and $\diffFgb$ acts by
    \[\phi\cdot(m_1,\dots,m_k;x)=(\phi(m_1),\dots,\phi(m_k);(d_{m_1}\phi,\dots,d_{m_k}\phi)(x)).\]
Note that the actions of $\Sigma_k$ and $\diffFgo$ on this space commute.

As before, we have maps     
    \begin{align}\label{eq: forgetful and evaluation maps for fixed n}
        \tau_k:(\widetilde{C}_k(\Fgo)\pushout{\Sigma_k} X)\parallelsum \diffFgb &\to B\diffFgb\\
        \varepsilon_k: (\widetilde{C}_k(\Fgo)\pushout{\Sigma_k} X)\parallelsum \diffFgb &\to (\widetilde{C}_k(\mm{R}^\infty)\times (E\GLt)^k)\pushout{\Sigma_k \wr \GLt} X.
    \end{align}
Here $\tau_k$ is again simply the classifying map for the homotopy quotient, and $\varepsilon_k$ is the map taking a point represented by $[m_1,\dots,m_k;x]$ and an embedding $e:\Fgb\hookrightarrow\Rinf$ to the class
    \[\big[ [e(m_1),\dots,e(m_k);e_*(s(m_1)), \dots, e_*(s(m_k))],x \big]\]
where $s(p)$ denotes the chosen oriented frame on $p\in\Fgb$. We remark that replacing $X$ by $Z^k$ one recovers precisely the definition of the maps $\tau$ and $\varepsilon$ in \eqref{eq: forgetful and evaluation maps}.

\begin{lemma}[\cite{MR1851247,bonatto_2022}]\label{lemma:parametrised-decoupling}
    For any $\Sigma_k\wr \GLt$-space $X$, let $\tau_k$ and $\varepsilon_k$ be the maps defined in \eqref{eq: forgetful and evaluation maps for fixed n}. Then 
        \[\tau_k\times \varepsilon_k: (\widetilde{C}_k(\Fgb)\pushout{\Sigma_k}X)\parallelsum \diffFgb\to B\diffFgb\times (\widetilde{C}_k(\mm{R}^\infty)\times (E\GLt)^k)\pushout{\Sigma_k \wr \GLt} X.\]
    induces a homology isomorphism in degrees $\leq \frac{2}{3}g$.
\end{lemma}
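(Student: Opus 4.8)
The plan is to reduce the statement to the unparametrised decoupling theorem for the classifying spaces of punctured surfaces --- the case $X=\ast$, in which the source is $C_k(\Fgb)\parallelsum\diffFgb\simeq B\diffFgbk$ and the assertion is that the map $B\diffFgbk\to B\diffFgb\times B(\Sigma_k\wr\GLt)$ is a homology isomorphism in degrees $\leq\tfrac23 g$ --- and then to propagate it from $X=\ast$ to a general label space. The unparametrised case is \cite{MR1851247} (with the stated range coming from the homological stability bounds recalled in the introduction), so the content here is the propagation.

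First I would put both sides into a uniform shape. Let $\widetilde{C}_k^{\fr}(\Fgb)$ be the space of ordered $k$-point configurations in $\Fgb$ equipped with a frame of the tangent space at each point; the chosen framing $s$ trivialises the projection $\widetilde{C}_k^{\fr}(\Fgb)\to\widetilde{C}_k(\Fgb)$, and $\widetilde{C}_k^{\fr}(\Fgb)$ carries commuting \emph{free} actions of $\Sigma_k\wr\GLt$ (on the ordering and on the frames) and of $\diffFgb$ (translating the points and acting on the frames through the derivative). There is a $\diffFgb$- and $\Sigma_k\wr\GLt$-equivariant homeomorphism $\widetilde{C}_k^{\fr}(\Fgb)\times_{\Sigma_k\wr\GLt}X\cong\widetilde{C}_k(\Fgb)\times_{\Sigma_k}X$ carrying the $\diffFgb$-action of the Lemma to the one that moves only the first factor. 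Passing to homotopy quotients by $\diffFgb$ identifies the source of $\tau_k\times\varepsilon_k$ with $\bigl(\widetilde{C}_k^{\fr}(\Fgb)\parallelsum\diffFgb\bigr)\times_{\Sigma_k\wr\GLt}X$ and the target with $B\diffFgb\times\bigl((\widetilde{C}_k(\Rinf)\times(E\GLt)^k)\times_{\Sigma_k\wr\GLt}X\bigr)$, in such a way that $\tau_k\times\varepsilon_k$ is obtained by applying $(-)\times_{\Sigma_k\wr\GLt}X$ to one fixed $\Sigma_k\wr\GLt$-equivariant map. Since $\widetilde{C}_k(\Rinf)\times(E\GLt)^k$ is a weakly contractible free $\Sigma_k\wr\GLt$-space, quotienting that fixed map by the full free action recovers precisely the unparametrised decoupling map above.

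Next I would run an induction on the label space. Both functors $X\mapsto\bigl(\widetilde{C}_k^{\fr}(\Fgb)\parallelsum\diffFgb\bigr)\times_{\Sigma_k\wr\GLt}X$ and $X\mapsto B\diffFgb\times\bigl((\widetilde{C}_k(\Rinf)\times(E\GLt)^k)\times_{\Sigma_k\wr\GLt}X\bigr)$ are Borel-type constructions for free actions: they send weak equivalences to weak equivalences and homotopy pushouts and filtered homotopy colimits to the same. Hence the class of $X$ on which $\tau_k\times\varepsilon_k$ is a homology isomorphism in degrees $\leq\tfrac23 g$ is closed under weak equivalences, filtered homotopy colimits, and homotopy pushouts; for the pushouts one uses the Mayer--Vietoris sequences and the five lemma, with the customary bookkeeping one degree past the edge of the range, controlled by the skeletal spectral sequence of Section~\ref{subseq: spectral sequence}. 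In the intended application $X=Z^k$ for a pointed $\GLt$-space $Z$, building $Z$ cell by cell out of the basepoint then reduces $Z^k$ to $\ast^k=\ast$, where the statement is exactly the unparametrised decoupling $B\diffFgbk\to B\diffFgb\times B(\Sigma_k\wr\GLt)$ cited above; alternatively one quotes \cite{bonatto_2022}, where this parametrised form is proved directly.

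The main obstacle is precisely this reduction: the induction has to be arranged so that its only genuine input is the case $X=\ast$. A naive $\Sigma_k\wr\GLt$-CW induction does not suffice, because on a free cell one is left comparing $\widetilde{C}_k^{\fr}(\Fgb)\parallelsum\diffFgb\simeq B\diff(F_{g,b+k})$ with $B\diffFgb$, which is not a homology isomorphism in this range; it is the compatibility with the basepoint/diagonal structure of $Z^k$ that makes the bookkeeping collapse to the $X=\ast$ case. One must also keep the homological stability range --- the slope $\tfrac23$ --- uniform through every pushout and colimit (this is the standard ``surjective one degree up'' argument), which is why the result is genuinely more than a formal corollary of \cite{MR1851247} and why its proof is naturally intertwined with the stability statements it is used to prove.
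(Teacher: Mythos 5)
Your initial reduction to framed ordered configurations is sound, and your observation that the lemma should ultimately rest only on the unparametrised decoupling for $X=\ast$ is exactly the right instinct. However, your execution diverges from the paper's proof and contains a genuine misconception about the key geometric input.

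The paper does not run a cellular induction on the label space at all. It treats arbitrary $X$ in one stroke by observing that both sides of $\tau_k\times\varepsilon_k$ project to their $X=\ast$ analogues, namely $C_k(\Fgb)\parallelsum\diffFgb$ and $B\diffFgb\times C_k(\Rinf;B\GLt)$, and that these projections are fibrations with fibre $X$ on which $\tau_k\times\varepsilon_k$ acts as the identity. Zeeman's comparison theorem for the Serre spectral sequences then reduces the whole lemma to $X=\ast$ immediately. This handles \emph{arbitrary} $X$, whereas your cellular route only addresses $X=Z^k$ and requires a $\Sigma_k\wr\GLt$-CW structure on $X$ compatible with the basepoint and diagonal, which for a general topological group $\GLt$ and a general $\GLt$-space $Z$ is not something you can take for granted.

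The more serious issue is your claim that on a free cell the comparison $\widetilde{C}_k^{\fr}(\Fgb)\parallelsum\diffFgb\simeq B\diff(F_{g,b+k})\to B\diffFgb$ \emph{fails} to be a homology isomorphism in the range $\leq\tfrac23 g$. This is false, and in fact it is the central geometric input of the entire argument. The map in question is induced by capping off the $k$ extra boundary circles with discs, and Harer's stability (in its modern, improved form) says precisely that this is a homology isomorphism in degrees $\leq\tfrac23 g$. The paper's proof of the $X=\ast$ case uses exactly this: Palais' theorem shows $\varepsilon_k$ is a fibration with fibre $B\diff(F_{g,b+k})$, the map $\tau_k\times\varepsilon_k$ is compared against the product fibration over $C_k(\Rinf;B\GLt)$ with fibre $B\diffFgb$, and the induced map on fibres is boundary-capping, which Harer makes a homology isomorphism in the range. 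So the obstacle you raise against the naive CW induction is illusory; if you instead accept that the free-cell comparison does hold in the range, the ``compatibility with basepoint/diagonal structure'' hand-waving becomes unnecessary, and the remaining genuine gap is the treatment of non-free cells and the existence of the required CW structure. The paper's fibration argument sidesteps all of these questions at once, and is what you should be aiming for.
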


For completeness, we include a short proof of the above result. For details see \cite{MR1851247,bonatto_2022}.

\begin{proof}[Proof of Lemma \ref{lemma:parametrised-decoupling}]
    We start by reducing the proof to the case when $X$ is a point. The projections
        \begin{align*}
            (\widetilde{C}_k(\Fgb)\pushout{\Sigma_k}X)\parallelsum \diffFgb &\to {C}_k(\Fgb)\parallelsum \diffFgb\\
            B\diffFgb\times (\widetilde{C}_k(\mm{R}^\infty)\times (E\GLt)^k)\pushout{\Sigma_k \wr \GLt} X &\to  B\diffFgb\times {C}_k(\mm{R}^\infty,B\GLt)
        \end{align*}
    are both fibrations with fibre $X$, and the map $\tau_k\times \varepsilon_k$ induces a map between the corresponding fibre sequences, which is the identity on the fibers. If the map between the base spaces induces a homology isomorphism in degrees $\leq \frac{2}{3}g$, then by Zeeman's Comparison Theorem \cite{Zeeman} applied to the Serre spectral sequences associated to these fibrations, so does $\tau_k\times \varepsilon_k$. Hence it is enough to show that the map between the base spaces
        \[{\tau_k}\times {\varepsilon_k}:{C}_k(\Fgb)\parallelsum \diffFgb\to B\diffFgb\times {C}_k(\mm{R}^\infty,B\GLt)\]
    induces a homology isomorphism in degrees $\leq \frac{2}{3}g$.

    By Palais' Theorem \cite{MR117741}, the map $\varepsilon_k$ is a fibration with fiber $B\diff(F_{g,b+n})$. Moreover, the map ${\tau_k}\times {\varepsilon_k}$ induces a map of fibre sequences
        \[\begin{tikzcd}
        	{B\diff(F_{g,b+n})} & {{C}_k(\Fgb)\parallelsum \diffFgb} & {C_k(\mm{R}^\infty,B\GLt)} \\
        	B\diffFgb & {B\diffFgb \times C_k(\mm{R}^\infty,B\GLt)} & {C_k(\mm{R}^\infty,B\GLt)}
        	\arrow[from=2-1, to=2-2]
        	\arrow[from=2-2, to=2-3, "proj"]
        	\arrow[from=1-3, to=2-3, equal]
        	\arrow[from=1-2, to=2-2, "{\tau_k}\times {\varepsilon_k}"]
        	\arrow[from=1-1, to=2-1]
        	\arrow[from=1-1, to=1-2]
        	\arrow[from=1-2, to=1-3, "{\varepsilon_k}"]
        \end{tikzcd}\]
    where the leftmost map is the one induced by capping off the $n$ extra boundary components by gluing discs. This map was shown to induce a homology isomorphism in the range $\leq\frac{2}{3}g$ \cite{MR786348,ivaMR896878,ivaMR1015128,ivaMR1234264,boldsen2012improved,RWMR3438379}. Hence, by Zeeman's Comparison Theorem \cite{Zeeman} applied to the Serre spectral sequences associated to these fibre sequences, the map between the total spaces also induces homology isomorphisms in the range $\leq\frac{2}{3}g$.
\end{proof}

Equipped with Lemma \ref{lemma:parametrised-decoupling}, we are now ready to prove Theorem \ref{thm: decoupling labelled configurations}.

\begin{proof}[Proof of Theorem \ref{thm: decoupling labelled configurations}]
    The spaces $C(\Fgb;Z)$ and $C(\mm{R}^\infty;(E\GLt)_+\wedge_{\GLt} Z)$ consist of configuration with an arbitrary number of particles. However they have natural filtrations $C^{\leq k}(-)$ by the subspaces of configurations with at most $k$ points.
    These induce filtrations
        \begin{align*}
            X_k &\coloneqq C^{\leq k}(\Fgb;Z)\parallelsum \diffFgb\\
            Y_k &\coloneqq B\diffFgb \times C^{\leq k}(\mm{R}^\infty;(E\GLt)_+\pointedpushout{\GLt} Z).
        \end{align*}
    that are preserved under the map $\tau\times \delta$. We will inductively show the restrictions $X_k\to Y_k$ are homology isomorphisms for all $k$. 
    
    Since $Z$ is a space with a good basepoint, the inclusions $X_{k-1}\hookrightarrow X_k$ and $Y_{k-1}\hookrightarrow Y_k$ are cofibrations. Their subquotients are
        \[X_k/X_{k-1}=(E\diffFgb)_+\pointedpushout{\diffFgb} \widetilde{C}_k(\Fgb)_+\pointedpushout{\Sigma_k} Z^{\wedge k}\]
    and
        \[Y_k/Y_{k-1}=(B\diffFgb)_+\wedge \widetilde{C}_k(\mm{R}^\infty)_+\pointedpushout{\Sigma_k}((E\GLt)_+\pointedpushout{\GLt} Z)^{\wedge k}.\]
    Comparing the spectral sequences associated to these filtrations, it is enough to show that the induced map on these subquotients is a homology isomorphism. Consider the map of cofibrations:
    
        \begin{adjustbox}{width=\textwidth} 
            $\begin{tikzcd}[column sep=small]
                &&\\
                E\diff(\Fgb)\underset{\diff(\Fgb)}{\times} C_k(\Fgb) \ar[r] \ar[d, "\tau_k\times \varepsilon_k"] &
                E\diff(\Fgb)\underset{\diff(\Fgb)}{\times}     (\widetilde{C}_k(\Fgb)\underset{\,\Sigma_k}{\times} Z^{\wedge k}) \ar[r] \ar[d, "\tau_k\times \varepsilon_k"] & X_k/X_{k-1} \ar[d]\\
                B\diff(\Fgb)\times C_k(\R^\infty,B\GLt) \ar[r]  & B\diff(\Fgb)\times (\widetilde{C}_k(\R^\infty)\underset{\,\Sigma_k}{\times} ((E\GLt)_+\underset{{\GLt}}{\wedge}Z)^{\wedge k}) \ar[r]  & Y_k/Y_{k-1}\\
            \end{tikzcd}$
        \end{adjustbox}
    
    By Lemma \ref{lemma:parametrised-decoupling} with $X=*$ the left-hand map induces a homology isomorphism in degrees $\leq \frac{2}{3}g$, and by the same result with $X=Z^{\wedge n}$, so does the middle map. Then by the five lemma, the right-hand map also induces a homology isomorphism in degrees $\leq \frac{2}{3}g$, as required.
\end{proof}

\subsection{Homological Stability}\label{subsec: homological stability}

Let $F_{g+1,b}$ be a surface of genus $g+1$ and $b\geq 1$ boundary components, which is obtained from $\Fgb$ by a boundary connected sum with $F_{1,1}$. Then extending diffeomorphisms by the identity on $F_{1,1}$ gives a map of topological groups
    \[s:\diff(\Fgb)\hookrightarrow \diff(F_{g+1,b})\]
which we refer to as the \emph{stabilisation map}. Moreover, the inclusion $\Fgo\hookrightarrow F_{g+1,b}$ induces a continuous map of labelled configuration spaces $C(\Fgb;Z)\to C(F_{g+1,b};Z)$ which is $s$-equivariant. Together this implies we have an induced map on the Borel constructions:

\begin{corollary}\label{cor: homological stability for moduli of labelled configurations}
    For $b\geq 1$, the stabilisation map on the Borel constructions
        \[s_*:C(\Fgb;Z)\parallelsum\diffFgb\to C(F_{g+1,b};Z)\parallelsum\diff(F_{g+1,b})\]
    induces a homology isomorphism in degrees $\leq \frac{2}{3}g$.
\end{corollary}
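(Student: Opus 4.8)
The plan is to deduce Corollary~\ref{cor: homological stability for moduli of labelled configurations} directly from the decoupling theorem (Theorem~\ref{thm: decoupling labelled configurations}) by checking that the decoupling maps are compatible with the stabilisation maps, and then exploiting the fact that decoupling splits off the genus-dependence entirely into the first factor $B\diffFgb$. So first I would record that the stabilisation $s\colon\diff(\Fgb)\hookrightarrow\diff(F_{g+1,b})$, together with the $s$-equivariant inclusion $C(\Fgb;Z)\to C(F_{g+1,b};Z)$ induced by $\Fgb\hookrightarrow F_{g+1,b}$, induces a map on Borel constructions; and I would likewise fix compatible models (for instance embeddings into $\Rinf$ extending a chosen embedding of the fixed $F_{1,1}$-handle, and a framing on $F_{g+1,b}$ extending the one on $\Fgb$) so that the forgetful map $\tau$ and the evaluation map $\varepsilon$ of \eqref{eq: forgetful and evaluation maps} are strictly natural with respect to $s_*$.

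The key point is that in the square
\[\begin{tikzcd}
 C(\Fgb;Z)\parallelsum\diffFgb \ar[r,"\tau\times\varepsilon"] \ar[d,"s_*"] & B\diffFgb\times C(\Rinf;(E\GLt)_+\pointedpushout{\GLt}Z) \ar[d,"Bs\times\id"]\\
 C(F_{g+1,b};Z)\parallelsum\diff(F_{g+1,b}) \ar[r,"\tau\times\varepsilon"] & B\diff(F_{g+1,b})\times C(\Rinf;(E\GLt)_+\pointedpushout{\GLt}Z)
\end{tikzcd}\]
the right-hand vertical map is $Bs\times\id$: the evaluation factor $C(\Rinf;(E\GLt)_+\pointedpushout{\GLt}Z)$ does not depend on the genus at all, so on that factor the map is the identity. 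By Theorem~\ref{thm: decoupling labelled configurations} both horizontal maps are homology isomorphisms in degrees $\leq\tfrac23 g$ (the bottom one in degrees $\leq\tfrac23(g+1)$, hence in particular $\leq\tfrac23 g$). Therefore, by the two-out-of-three property in the relevant range, $s_*$ is a homology isomorphism in degrees $\leq\tfrac23 g$ if and only if $Bs\times\id$ is. By the Künneth theorem the latter is a homology isomorphism in degrees $\leq\tfrac23 g$ precisely when $Bs\colon B\diffFgb\to B\diff(F_{g+1,b})$ is; and this is exactly the classical genus stabilisation for mapping class groups of surfaces with boundary, which holds in degrees $\leq\tfrac23 g$ by \cite{MR786348,boldsen2012improved,RWMR3438379}.

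Assembling these observations gives the corollary. The only real content beyond citing Theorem~\ref{thm: decoupling labelled configurations} and classical surface homological stability is the naturality check, and the main (mild) obstacle is precisely making sure the models are chosen so that the decoupling maps commute strictly with $s_*$, and that the evaluation-space factor is literally identified for $\Fgb$ and $F_{g+1,b}$ — both of which are routine once one fixes, once and for all, a handle $F_{1,1}$ with a chosen embedding and framing and builds $F_{g+1,b}$, its embedding, and its framing by gluing on this fixed handle. A small bookkeeping point to state explicitly is the degree arithmetic: the bottom decoupling equivalence is valid in degrees $\leq\tfrac23(g+1)>\tfrac23 g$, so no range is lost in the comparison.
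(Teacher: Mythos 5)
Your argument is exactly the paper's: fix a model for $E\diffFgb$ (the paper uses $\Emb^\partial(\Fgb,(-\infty,0]\times\Rinf)$) so that the decoupling map is natural with respect to stabilisation, then compare the top and bottom of the resulting commutative square using Theorem~\ref{thm: decoupling labelled configurations} on the verticals and classical Harer stability plus Künneth on the bottom. The only cosmetic difference is that you make the degree bookkeeping ($\leq\tfrac23(g+1)$ on the bottom) explicit, which the paper leaves implicit.
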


The above result is a corollary of Theorem \ref{thm: decoupling labelled configurations}, however care has to be taken with respect to the model we have used for the Borel constructions and classifying spaces. Namely, it is not clear how to define a stabilisation map on the level of embedding spaces $\Emb(\Fgb,\Rinf)\to \Emb(F_{g+1,b},\Rinf)$ which induces the desired map on Borel constructions. This can be remedied by taking as model for $E\diffFgb$ a weakly contractible subspace of $\Emb(\Fgb,\Rinf)$ that still has a free and has proper action of $\diffFgb$ and in which the stabilisation is clear. 

Fix a boundary component of $\Fgb$ and an embedding $S^1\hookrightarrow\{0\}\times\Rinf$. We denote by $\Emb^\partial (\Fgb,(-\infty,0]\times\Rinf)$ the space of all extensions to an embedding of $\Fgb$ which are standard on a collar neighbourhood of the marked boundary. By the same arguments as above, $\Emb^\partial (\Fgb,(-\infty,0]\times\Rinf)$ is a model for $E\diffFgb$, and the inclusion map 
    \[\Emb^\partial (\Fgb,(-\infty,0]\times\Rinf) \hookrightarrow\Emb(\Fgb,\Rinf)\]
is a $\diffFgb$-equivariant weak homotopy equivalence. Fixing an embedding $e:F_{1,2}\hookrightarrow\Rinf$ which restricts to the chosen embedding on a collar of the boundary, we get an inclusion 
    \[\Emb^\partial (\Fgb,(-\infty,0]\times\Rinf) \to\Emb^\partial (F_{g+1,b},(-\infty,0]\times\Rinf)\]
given by extending any embedding of $\Fgb$ by $e$. This is clearly compatible with the stabilisation map. 

\begin{proof}[Proof of Corollary \ref{cor: homological stability for moduli of labelled configurations}]    
    Using as model for $E\diffFgb$ the space $\Emb^\partial (\Fgb,(-\infty,0]\times\Rinf)$ described above, we get a commutative diagram
    
        \begin{adjustbox}{width=\textwidth} 
        $\begin{tikzcd}[column sep=15mm]
        	{C(\Fgb;Z)\parallelsum \diffFgb} & {C(F_{g+1,b};Z)\parallelsum\diff(F_{g+1,b})} \\
        	{B\diffFgb\times C(\Rinf;(E\GLt)_+\pointedpushout{\GLt} Z)} & {B\diff(F_{g+1,b})\times C(\Rinf;(E\GLt)_+\pointedpushout{\GLt} Z).}
        	\arrow["s", from=1-1, to=1-2]
        	\arrow[from=1-2, to=2-2, "\tau\times\varepsilon"]
        	\arrow[from=1-1, to=2-1, "\tau\times\varepsilon"']
        	\arrow["{s\times id}"', from=2-1, to=2-2]
        \end{tikzcd}$
        \end{adjustbox}
        
    By Theorem \ref{thm: decoupling labelled configurations} the vertical maps induce homology isomorphisms in degrees $\leq\frac{2}{3}g$, and by Harer' Stability Theorem \cite{MR786348,ivaMR896878,ivaMR1015128,ivaMR1234264,boldsen2012improved,RWMR3438379} and Kunneth's Theorem, so does the bottom map. Therefore the top map must also induce homology isomorphisms in degrees $\leq\frac{2}{3}g$.
\end{proof}

The Decoupling Theorem also allows us to identify what the homology stabilises to. Let $C(\Finf;Z)\parallelsum \diffFinf$ and $B\diffFinf$ be respectively
        \begin{align*}
             C(\Finf;Z)\parallelsum \diffFinf\coloneqq & \colim (C(F_{1,1};Z)\parallelsum{\diff(F_{1,1})}\xrightarrow{s} C(F_{2,1};Z)\parallelsum{\diff(F_{2,1})}\xrightarrow{s}\dots)\\
             B\diffFinf\coloneqq & \colim (B\diff(F_{1,1}\xrightarrow{s} B{\diff(F_{2,1})}\xrightarrow{s}\dots).
        \end{align*}
    
\begin{corollary}\label{cor: homology iso on limit spaces - labelled}
    For $Z$ a pointed connected $\GLt$-space, the decoupling map induces 
        \[\tau\times\varepsilon:C(\Finf;Z)\parallelsum \diffFinf \to \Omega^\infty \MTSO(2)\times \Omega^\infty \Sigma^\infty\left(E\GLt_+\pointedpushout{\GLt} Z\right)\]
    which is a homology isomorphism in all degrees.
\end{corollary}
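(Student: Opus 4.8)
The plan is to pass to the colimit over the genus stabilisation maps in the Decoupling Theorem \ref{thm: decoupling labelled configurations} and then identify the resulting stable pieces with the claimed infinite loop spaces. First I would take the commutative square appearing in the proof of Corollary \ref{cor: homological stability for moduli of labelled configurations}, which is compatible with the stabilisation maps $s$ on the left column and $s \times \id$ on the right column, and pass to the vertical colimit along genus. Since homology commutes with filtered colimits and, by Theorem \ref{thm: decoupling labelled configurations}, each map $\tau\times\varepsilon$ is a homology isomorphism in degrees $\leq \frac{2}{3}g$ with the range growing to infinity, the colimit map
\[
\tau\times\varepsilon:C(\Finf;Z)\parallelsum \diffFinf \to \left(\colim_g B\diffFgo\right)\times C(\Rinf;E\GLt_+\pointedpushout{\GLt} Z)
\]
is a homology isomorphism in \emph{all} degrees. (Note that the right-hand factor $C(\Rinf;E\GLt_+\pointedpushout{\GLt} Z)$ does not depend on $g$, so its colimit is itself.)

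Next I would identify the two factors on the right. For the first factor, the content of the Madsen--Weiss theorem \cite{madsen2007stable}, together with the group-completion theorem, gives a homology equivalence $\colim_g B\diffFgo \to \Omega^\infty_0 \MTSO(2)$, and since $\MTSO(2)$ is connected one can replace the basepoint component by all of $\Omega^\infty\MTSO(2)$ without changing homology (or one simply states the equivalence onto the relevant path component; since $Z$ is connected, $C(\Finf;Z)\parallelsum\diffFinf$ maps into a single component). For the second factor, I would invoke the group-completed scanning theorem of Segal \cite{Segal} (or May): for a connected pointed space $W$, the scanning map $C(\Rinf;W)\to \Omega^\infty\Sigma^\infty W$ is a homology equivalence, using that $C(\Rinf;W)$ is already group-complete as an $E_\infty$-algebra when $W$ is connected (equivalently, $\pi_0 C(\Rinf;W)$ is already a group — in fact trivial). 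Applying this with $W = E\GLt_+\pointedpushout{\GLt} Z$, which is connected because $Z$ is connected, gives a homology equivalence onto $\Omega^\infty\Sigma^\infty(E\GLt_+\pointedpushout{\GLt} Z)$. Composing, one obtains the stated map and the all-degrees homology isomorphism. Compatibility with the stabilisation maps at finite genus is immediate from the fact that every map in sight was constructed to be compatible with $s$.

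The main obstacle, and the step requiring the most care, is the identification of the stable configuration-space factor with $\Omega^\infty\Sigma^\infty$: one must be sure that $C(\Rinf;W)$ is genuinely group-complete (so that no further group completion is needed on the left), which relies on the connectivity hypothesis on $Z$ — this is exactly where the assumption ``$Z$ pointed \emph{connected}'' is used, both here and in the Madsen--Weiss factor. A secondary, more bookkeeping-type point is that the colimit $\colim_g B\diffFgo$ built from the embedding-space models $\Emb^\partial(\Fgo,(-\infty,0]\times\Rinf)$ agrees (up to weak equivalence) with the usual $B\diffFinf$, so that the Madsen--Weiss identification applies verbatim; this follows from the $\diffFgo$-equivariant weak equivalence between the two models for $E\diffFgo$ noted just before the proof of Corollary \ref{cor: homological stability for moduli of labelled configurations}. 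Neither point is deep, but both must be stated to make the chain of homology equivalences rigorous.
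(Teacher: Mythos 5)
Your proposal is essentially the paper's own proof: pass to the colimit over genus, use Theorem \ref{thm: decoupling labelled configurations} together with Corollary \ref{cor: homological stability for moduli of labelled configurations} to conclude the colimit map is a homology isomorphism in all degrees, then identify $B\diffFinf$ with $\Omega^\infty\MTSO(2)$ via Madsen--Weiss and $C(\Rinf;E\GLt_+\pointedpushout{\GLt}Z)$ with $\Omega^\infty\Sigma^\infty(E\GLt_+\pointedpushout{\GLt}Z)$ via Segal. The extra remarks you supply (where connectivity of $Z$ is used, and the compatibility of embedding-space models across genus) are correct refinements of the same argument rather than a different route.
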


\begin{proof}
    Theorem \ref{thm: decoupling labelled configurations} and Corollary \ref{cor: homological stability for moduli of labelled configurations} imply that the decoupling map on the colimits 
        \begin{align}\label{eq: decoupling colimit}
            \tau\times\varepsilon:C(\Finf;Z)\parallelsum \diffFinf \to B\diffFinf\times C(\Rinf,E\GLt_+\pointedpushout{\GLt} Z)
        \end{align}
    induces a homology isomorphism. By \cite{MR2506750,madsen2007stable}, $B\diffFinf$ admits a map to $\Omega^\infty \MTSO(2)$ which is a homology isomorphism, and by \cite{Segal},  the right-most configuration space in \eqref{eq: decoupling colimit} is homotopy equivalent to $\Omega^\infty \Sigma^\infty\left(E\GLt_+\wedge_{\GLt} Z\right)$.
\end{proof}

\subsection{Monoid of Moduli of Labelled Configuration Spaces}\label{subsec: monoids of moduli of configuration spaces}

Gluing two surfaces $\Fgo$ and $F_{h,1}$ along part of their boundary defines a map of topological groups 
    \[\diff(\Fgo)\times \diff(F_{h,1})\to \diff(F_{g+h,1}).\]
This can be made into an associative operation if we fix once and for all oriented surfaces $\Fgb$ of genus $g$ and one boundary component, and compatible with the stabilisation (see Section \ref{subsec: homological stability}). Using these choices for our surfaces, we can see that the above map gives an associative operation in the collection of diffeomorphism groups of all $\Fgo$. An example of such surfaces and multiplication is depicted in Figure \ref{fig:monoid of configurations}. 

\begin{figure}[bh]
    \centering\def\svgwidth{\columnwidth}
\begingroup%
  \makeatletter%
  \providecommand\color[2][]{%
    \errmessage{(Inkscape) Color is used for the text in Inkscape, but the package 'color.sty' is not loaded}%
    \renewcommand\color[2][]{}%
  }%
  \providecommand\transparent[1]{%
    \errmessage{(Inkscape) Transparency is used (non-zero) for the text in Inkscape, but the package 'transparent.sty' is not loaded}%
    \renewcommand\transparent[1]{}%
  }%
  \providecommand\rotatebox[2]{#2}%
  \newcommand*\fsize{\dimexpr\f@size pt\relax}%
  \newcommand*\lineheight[1]{\fontsize{\fsize}{#1\fsize}\selectfont}%
  \ifx\svgwidth\undefined%
    \setlength{\unitlength}{1466.58332971bp}%
    \ifx\svgscale\undefined%
      \relax%
    \else%
      \setlength{\unitlength}{\unitlength * \real{\svgscale}}%
    \fi%
  \else%
    \setlength{\unitlength}{\svgwidth}%
  \fi%
  \global\let\svgwidth\undefined%
  \global\let\svgscale\undefined%
  \makeatother%
  \begin{picture}(1,0.1271469)%
    \lineheight{1}%
    \setlength\tabcolsep{0pt}%
    \put(0,0){\includegraphics[width=\unitlength,page=1]{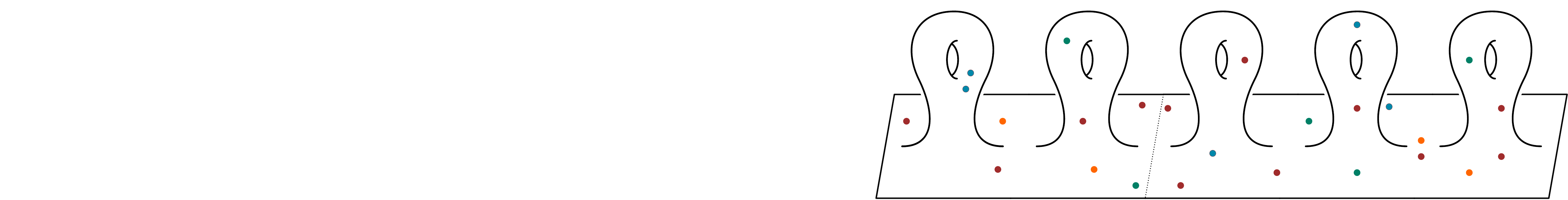}}%
    \put(0.00988108,0.05411934){\makebox(0,0)[rt]{\lineheight{1.25}\smash{\begin{tabular}[t]{r}$\scalebox{1.5}{\ensuremath{\mu}}$\end{tabular}}}}%
    \put(0,0){\includegraphics[width=\unitlength,page=2]{monoidofconfigurations2.pdf}}%
    \put(0.21780384,0.0355019){\makebox(0,0)[t]{\lineheight{1.25}\smash{\begin{tabular}[t]{c}$\scalebox{1.5}{\ensuremath{,}}$\end{tabular}}}}%
    \put(0.51787287,0.05411946){\makebox(0,0)[lt]{\lineheight{1.25}\smash{\begin{tabular}[t]{l}$\scalebox{1.5}{\ensuremath{=}}$\end{tabular}}}}%
    \put(0,0){\includegraphics[width=\unitlength,page=3]{monoidofconfigurations2.pdf}}%
  \end{picture}%
\endgroup%

    \caption{Example of the map $\mu: MC(\mmm{C})_2\times MC(\mmm{C})_3 \to MC(\mmm{C})_{5}$ where $\mmm{C}$ is the space of colours, with white as the basepoint.}\label{fig:monoid of configurations}
\end{figure}

Up to homotopy, the above gluing process is equivalent to gluing the boundary circles of surfaces $\Fgo$ and $F_{h,1}$, to two of the three boundary circles of $F_{0,3}$, what is called the \emph{pair of pants multiplication}. We choose to think of this multiplication in terms of the first description because in this way the product is strictly associative.

This operation also induces a multiplication on the classifying spaces $B\diffFgo$, which can be made associative by picking a convenient model. As in Section \ref{subsec: homological stability}, we will use as $E\diffFgo$ a certain subspace of $\Emb(\Fgo,\Rinf)$. Namely, we can fix embeddings $\delta_g:S^1\hookrightarrow [0,g] \times\Rinf$ such that $\delta_g(S^1)\cap (\{0\}\times\Rinf)=\{0\}\times(-1,1)\times \{0\}$ and $\delta_g(S^1)\cap (\{g\}\times\Rinf)=\{g\}\times(-1,1)\times \{0\}$. We denote by $\Emb^{\delta_g} (\Fgo,[0,g]\times\Rinf)$ the space of all extensions to an embedding of $\Fgo$ which are standard on a collar neighbourhood of the boundary. By the same arguments as above, $\Emb^{\delta_g} (\Fgo,[0,g]\times\Rinf)$ is a model for $E\diffFgo$, and the inclusion map 
    \[\Emb^{\delta_g} (\Fgo,[0,g]\times\Rinf) \hookrightarrow\Emb(\Fgo,\Rinf)\]
is a $\diffFgo$-equivariant weak homotopy equivalence. By picking embeddings $\delta_g$ which are compatible with the stabilisation map, we can define a multiplication
    \[\Emb^{\delta_g} (\Fgo,[0,g]\times\Rinf)\times \Emb^{\delta_h} (F_{h,1},[0,h]\times\Rinf)\to\Emb^{\delta_{g+h}} (F_{g+h,1},[0,g+h]\times\Rinf)\]
by extending an embedding of $\Fgo$ by the translation of the embedding of $F_{h,1}$ in the first coordinate by $g$. This is clearly associative and it is compatible with the associative multiplication on the diffeomorphism groups.

Taking as model for $E\diffFgo$ the spaces  $\Emb^{\delta_g} (\Fgo,[0,g]\times\Rinf)$ we obtain an associative multiplication on classifying spaces. This structure equips the space $\dcup{g\geq 0}B\diffFgo$ with the structure of a topological monoid, which we refer to as the \emph{surface monoid}. This is equivalent to the one studied by Tillmann in \cite{Tillmann_surface_operad}, which was essential in the proof of the Madsen-Weiss Theorem \cite{madsen2007stable}.

The operation on diffeomorphism groups and spaces $E\diffFgo$ described above, together with the fixed identifications $F_{g+h,1}=\Fgo\#_\partial F_{h,1}$, induce an associative multiplication also on the Borel constructions (see Figure \ref{fig:monoid of configurations})
    \[\mu:C(\Fgo;Z)\parallelsum\diffFgo\times C(F_{h,1};Z)\parallelsum\diff(F_{h,1})\to C(F_{g+h,1};Z)\parallelsum\diff(F_{g+h,1}).\]

\begin{definition}    
We denote by $MC(Z)_g\coloneqq C(\Fgo;Z)\parallelsum\diffFgo$. The \emph{monoid of moduli of configurations labelled by $Z$} is the topological monoid given by the disjoint union $MC(Z)\coloneqq\dcup{g\geq 0}MC(Z)_g$ together with the operation $\mu$.
\end{definition}    

\begin{theorem}\label{thm:group-completion}
    For any pointed $\GLt^+$-space $Z$, the decoupling map induces a weak equivalences on group completions
        \[\Omega B\left(\dcup{g}C(\Fgo;Z)\parallelsum\diffFgo\right) \simeq \Omega B\left(\dcup{g}B\diffFgo\right)\times \Omega B\,C(\Rinf;E\GLt_+\pointedpushout{\GLt}Z).\]
\end{theorem}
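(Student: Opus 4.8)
The plan is to deduce Theorem~\ref{thm:group-completion} from the monoidal refinement of the Decoupling Theorem~\ref{thm: decoupling labelled configurations}. The decoupling map $\tau\times\varepsilon$ was defined componentwise on $MC(Z)_g = C(\Fgo;Z)\parallelsum\diffFgo$; the first task is to check that, with the model $\Emb^{\delta_g}(\Fgo,[0,g]\times\Rinf)$ for $E\diffFgo$ fixed in Section~\ref{subsec: monoids of moduli of configuration spaces}, the maps $\tau$ and $\varepsilon$ are maps of topological monoids. For $\tau$ this is immediate: it is the classifying map, and the multiplication on $\dcup{g}B\diffFgo$ was defined precisely so that the projections $\Emb^{\delta_g}(\Fgo,[0,g]\times\Rinf)\times C(\Fgo;Z)\to\Emb^{\delta_g}(\Fgo,[0,g]\times\Rinf)$ are compatible with gluing. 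For $\varepsilon$ one observes that gluing along boundary circles and then embedding into $\Rinf$ (by translating the second embedding by $g$ in the first coordinate) simply juxtaposes the two configurations in $\Rinf$ together with their tangential decorations; since the monoid structure on $C(\Rinf;E\GLt_+\pointedpushout{\GLt}Z)$ is (up to the usual $A_\infty$-coherence, or strictly after a standard rectification) given by placing configurations side by side in disjoint half-spaces, $\varepsilon$ is multiplicative. Thus $\tau\times\varepsilon$ is a map of topological monoids
\[
MC(Z)\longrightarrow \left(\dcup{g}B\diffFgo\right)\times C(\Rinf;E\GLt_+\pointedpushout{\GLt}Z).
\]

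Next I would pass to classifying spaces and group-complete. Applying $\Omega B(-)$ to a map of topological monoids is functorial, and $\Omega B$ takes products of monoids to products of spaces, so it suffices to show the map $\Omega B(\tau\times\varepsilon)$ is a weak equivalence. The group-completion theorem (McDuff--Segal) identifies $H_*(\Omega B M)$ with the localisation $H_*(M)[\pi_0(M)^{-1}]$ for a topological monoid $M$ with $\pi_0$ acting on homology by left multiplication in a way that admits a mapping telescope. Here $\pi_0(MC(Z))=\N$ (indexed by genus), and inverting the stabilisation class amounts to taking the mapping telescope along $s_*$. By Corollary~\ref{cor: homological stability for moduli of labelled configurations} (homological stability) together with Theorem~\ref{thm: decoupling labelled configurations}, the map $\tau\times\varepsilon$ becomes a homology isomorphism after this telescope/colimit — this is exactly the content of Corollary~\ref{cor: homology iso on limit spaces - labelled}, extended to the non-connected $Z$ case (where one no longer identifies the pieces with infinite loop spaces, but the decoupling homology isomorphism onto $B\diffFinf\times C(\Rinf;E\GLt_+\pointedpushout{\GLt}Z)$ still holds). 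Hence $\tau\times\varepsilon$ induces a homology isomorphism on the group completions. Since a homology isomorphism between group completions of homotopy-commutative monoids is, after taking $\Omega B$, a map of simple (indeed infinite-loop, or at least $H$-)spaces, it is a weak equivalence; alternatively one invokes that $\Omega B M$ of a homotopy-commutative monoid is a simple space and applies the homology Whitehead theorem.

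The main obstacle is the identification of the monoid structure on $\varepsilon$'s target and the verification that $\varepsilon$ is genuinely multiplicative (not merely up to coherent homotopy), since $C(\Rinf;-)$ carries only an $E_\infty$-structure a priori and the gluing model for surfaces is strictly associative. The clean way around this is to note that all that is needed for the group-completion argument is a map of $E_1$-monoids up to homotopy inducing the correct map on homology after localisation; one can either rectify the side-by-side multiplication on $C(\Rinf;-)$ to a strict monoid (standard, e.g.\ by using configurations in $(0,\infty)\times\Rinf$ with the first coordinate giving a strict ordering) or work throughout with the telescope description of $\Omega B$, where only the homological statement of Theorem~\ref{thm: decoupling labelled configurations} enters. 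A secondary, minor point is that $Z$ is not assumed connected here, so $C(\Rinf;E\GLt_+\pointedpushout{\GLt}Z)$ need not be connected and its group completion is a nontrivial delooping; but the decoupling homology isomorphism of Theorem~\ref{thm: decoupling labelled configurations} holds for all pointed $\GLt$-spaces $Z$, so the argument goes through verbatim, yielding the stated weak equivalence.
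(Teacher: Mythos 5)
Your proposal is correct and takes essentially the same route as the paper: you establish that $\tau$ and $\varepsilon$ are maps of topological monoids (rectifying the juxtaposition multiplication on the target of $\varepsilon$ via a half-line parameter, exactly the paper's model $C'(\R^\infty;X)$), invoke the McDuff--Segal group-completion theorem to reduce to a homology isomorphism on the limit spaces supplied by Theorem~\ref{thm: decoupling labelled configurations} and Corollary~\ref{cor: homological stability for moduli of labelled configurations}, and conclude by simplicity of $\Omega B$ and the homology Whitehead theorem. The parenthetical $\pi_0(MC(Z))=\N$ holds only for connected $Z$, but you flag and correctly handle the disconnected case, and the paper is no more explicit on that point.
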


Segal showed in \cite{Segal} that the space $\Omega^\infty\Sigma^\infty(X)$ is the group completion of the configuration space $C(\mm{R}^\infty; X)$ seen as a topological monoid where the operation is roughly given by transposition. Using the inclusion $\Emb^{\delta_g} (\Fgo,[0,g]\times\Rinf)\hookrightarrow\Emb(\Fgb,\Rinf)$, we see that the decoupling map of Theorem \ref{thm: decoupling labelled configurations} induces a map between the spaces using the current model. The proof of the above theorem will consist of showing that the decoupling induces a monoidal map between $MC(Z)$ and the monoids Tillmann and Segal, and to show that this induces a homotopy equivalence on group completions.
        
\begin{lemma}\label{lemma:group completion argument}
    The maps $\tau$ and ${\varepsilon}$ defined in \ref{eq: forgetful and evaluation maps} are compatible with these monoidal structures.
\end{lemma}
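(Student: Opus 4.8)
The plan is to unwind the definitions of the three monoidal structures and check that $\tau$ and $\varepsilon$ respect the multiplications up to the coherence built into the models chosen in Section~\ref{subsec: monoids of moduli of configuration spaces}. Recall that the monoid structure on $\dcup{g}B\diffFgo$ was obtained by choosing, as a model for $E\diffFgo$, the subspace $\Emb^{\delta_g}(\Fgo,[0,g]\times\Rinf)$, with multiplication given by concatenating an embedding of $\Fgo$ in $[0,g]\times\Rinf$ with the translation by $g$ in the first coordinate of an embedding of $F_{h,1}$ in $[0,h]\times\Rinf$. The monoid structure on $MC(Z)$ is the one induced on the Borel constructions $\Emb^{\delta_g}(\Fgo,[0,g]\times\Rinf)\times_{\diffFgo}C(\Fgo;Z)$ by this same concatenation (on the embedding coordinate) together with the obvious gluing of configurations on $\Fgo\#_\partial F_{h,1}=F_{g+h,1}$. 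For the target, the relevant monoid on $C(\Rinf;E\GLt_+\pointedpushout{\GLt}Z)$ is Segal's, where the product of two configurations is obtained by first pushing both into disjoint half-spaces of $\Rinf$ (``transposition'') and then taking their union; since $\Rinf$ has room to do this, the homotopy class of the result does not depend on the chosen disjoint embeddings, and one fixes once and for all a system of such embeddings compatible with the ones used for surfaces.

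First I would treat $\tau$. By construction $\tau$ is just the map classifying the Borel construction, i.e.\ the map $\Emb^{\delta_g}(\Fgo,[0,g]\times\Rinf)\times_{\diffFgo}C(\Fgo;Z)\to \Emb^{\delta_g}(\Fgo,[0,g]\times\Rinf)/\diffFgo$ induced by projecting away the configuration coordinate. Since both multiplications $\mu$ (source) and the surface-monoid multiplication (target) are defined by the \emph{same} operation on the embedding coordinate and both forget the configuration, the square
    \[\begin{tikzcd}
        MC(Z)_g\times MC(Z)_h \ar[r,"\mu"] \ar[d,"\tau\times\tau"'] & MC(Z)_{g+h} \ar[d,"\tau"]\\
        B\diffFgo\times B\diff(F_{h,1}) \ar[r] & B\diff(F_{g+h,1})
    \end{tikzcd}\]
commutes strictly, on the nose, from the definitions; the unit is respected because the empty configuration on $F_{0,1}$ is the unit of $MC(Z)$ and its image is the unit of the surface monoid. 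So the $\tau$-part is essentially bookkeeping.

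Next I would treat $\varepsilon$. Here the key point is to compare the two recipes for forming a product. On the source, $\mu$ glues configurations along the boundary; on the target, Segal's multiplication shoves the two labelled configurations into disjoint regions of $\Rinf$ and unions them. Given embeddings $e\colon\Fgo\hookrightarrow[0,g]\times\Rinf$ and $e'\colon F_{h,1}\hookrightarrow[0,h]\times\Rinf$ standard near the boundary, the glued embedding $e\# e'$ of $F_{g+h,1}$ sends $\Fgo$ into $[0,g]\times\Rinf$ and $F_{h,1}$ into $[g,g+h]\times\Rinf$; hence $\varepsilon$ applied to the glued datum produces the union of the configuration $\varepsilon$ would assign to $(e,\text{config on }\Fgo)$ (living in $[0,g]\times\Rinf\subset\Rinf$) with the one it assigns to $(e',\text{config on }F_{h,1})$ translated into $[g,g+h]\times\Rinf$ --- and, crucially, the local tangential frame data $e_*(s(m_i))$ is unchanged by this gluing/translation because translations act trivially on frames and the framings $s$ were fixed compatibly. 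Since these two regions of $\Rinf$ are disjoint, this union is exactly one instance of Segal's ``transposed union.'' Thus $\varepsilon$ intertwines $\mu$ with Segal's multiplication, up to the contractible ambiguity in the choice of disjoint embeddings, which is exactly the coherence data packaged into the $A_\infty$-structure; one checks the associativity and unit diagrams commute up to coherent homotopy the same way. I expect the main obstacle to be purely organisational: namely, being careful that the system of disjoint embeddings used to define Segal's monoid is chosen \emph{compatibly} with the half-spaces $[0,g]\times\Rinf$ so that $\varepsilon$ is genuinely (strictly, or at worst coherently) monoidal rather than only monoidal up to a homotopy one then has to control --- this is the same kind of model-dependent care already exercised in Section~\ref{subsec: monoids of moduli of configuration spaces}, and with the embeddings $\delta_g$ fixed as there, the concatenation of surfaces and the disjoint-union of configurations are manifestly compatible, so the lemma follows.
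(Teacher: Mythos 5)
Your proposal is essentially correct and follows the same overall strategy as the paper: $\tau$ is compatible on the nose because both multiplications act identically on the embedding coordinate and forget the configuration, while $\varepsilon$ requires a careful choice of model for the target configuration monoid. Where the paper is crisper is precisely at the point you flag as the ``main obstacle'': rather than invoking Segal's transposition monoid and arguing compatibility ``up to coherent homotopy,'' the paper replaces $C(\Rinf;X)$ outright by the homotopy-equivalent strict model
\[
C'(\Rinf,X)=\{(c,t)\in C(\Rinf,X)\times\R : t\geq 0,\ c\subset (0,t)\times\Rinf\}
\]
with juxtaposition as the (strictly associative) product, and then $\varepsilon$ becomes the genuinely monoidal map $[e,c]\mapsto(\varepsilon([e,c]),g)$. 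This turns your ``one fixes once and for all a system of disjoint embeddings compatibly with the half-spaces $[0,g]\times\Rinf$'' into an actual definition, and eliminates the need to discuss $A_\infty$-coherences at all. If you adopt the $C'$ model explicitly --- which is the natural formalisation of your idea --- your argument closes up cleanly and strictly; otherwise you would still owe the reader a justification that the coherences you allude to can indeed be arranged compatibly, which is exactly the bookkeeping the strict model is introduced to avoid.
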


For the map $\tau$, this result follows directly from the definition. For $\varepsilon$, some care has to be taken into making the configuration spaces into actual topological monoids (see \cite{Segal}). Namely, instead of $C(\Rinf; X)$, we use the homotopy equivalent space
    $${C}'(\Rinf,X)=\{(c,t)\in {C}(\Rinf,X)\times \mm{R}:t\geq 0, c\subset (0,t)\times \Rinf\}.$$ 
The monoidal structure is given by \emph{juxtaposition}, ie. $(c,t),(c',t') \mapsto (c\cup T_t (c'),t+t')$
where $T_t(-)$ is the map that translates a configuration by $t$ on the first direction. The map $\tau$ of the decoupling theorem is then equivalent to the monoidal map $MC(Z)\to {C}'(\Rinf,X)$ taking an element of $[e,c]\in MC(Z)_g$ to $(\tau([e,c]),g)$.

\begin{proof}[Proof of Theorem \ref{thm:group-completion}]
It is enough to show that the map of monoids given by the decoupling induces a homotopy equivalence on group completions. As the group completions are loop spaces, they are in particular simple and, by the Whitehead theorem for simple spaces, it suffices to show that it induces a homology equivalence on the group completions. Both monoids are homotopy commutative, hence the group completion theorem \cite{McDuff-Segal} can be applied. Therefore it is enough to prove that the induced map on the limit spaces (defined in Section \ref{subsec: homological stability})
    \begin{align}\label{eq:limit-spaces-for-group-completion}
        \tau\times\varepsilon:C(\Finf;Z)\parallelsum \diffFinf \to B\diffFinf \times C(\mm{R}^\infty;E\GLt_+\pointedpushout{\GLt} Z)
    \end{align}
is a homology equivalence. This holds by Theorem \ref{thm: decoupling labelled configurations} and Corollary \ref{cor: homological stability for moduli of labelled configurations}.
\end{proof}

\section{Decoupling Configuration Spaces with Partially Summable Labels}\label{chap: configuration spaces summable labels}

In this section we prove the main result of this paper, which is a decoupling result for configuration spaces with partially summable labels. In this case, the labelling space is equipped with a partial multiplication and the particles are allowed to collide whenever their labels can be multiplied. The space $\Csum(M;P)$ of configurations in $M$ with partially summable labels in $P$ has been defined in \cite{Salvatore1999Configuration} and its definition requires more sophisticated tools such as the Fulton-MacPherson configuration spaces and operad. In section \ref{sec:partial d-monoids} we recall these definitions and the concept of a partial $d$-monoids. 

To prove the decoupling theorem for $\Csum(M;P)$, we develop a semi-simplicial resolution for this space in section \ref{sec:poset-surrounded-config}, denoted $|\Tsum(M;P)_\bullet|$. In Proposition \ref{prop: weak equivalence with poset one} we show that indeed this space is weakly equivalent to $\Csum(M;P)$. In the decoupling context, we naturally encounter another space of discs with configurations which is constructed in Definition \ref{def:thin-discs}.

With these semi-simplicial spaces, we prove Theorem \ref{main thm: decoupling summable labels}, combining Corollary \ref{cor:disc model for borel construction} and Theorem \ref{thm: decoupling summable configurations}. We then use this to deduce Theorem \ref{mainthm: homological stability for moduli of summable configurations} and Corollary \ref{mainthm: group completion summable}, which are, respectively Corollaries \ref{cor: homological stability for moduli of summable configurations} and \ref{thm: group completion summable}.

\subsection{Fulton-MacPherson configuration space and partially summable labels}\label{sec:partial d-monoids}

In this section we recall the concept of a configuration space with summable labels in a partial monoid as described in \cite{Salvatore1999Configuration}.

A partial $d$-monoid $P$ is, in essence, a space with a continuous operation that is not defined on all collections of elements, but only on a subset of \emph{composable elements}. The data defining such a partial monoid consists of the subset of composable elements, together with an operation on this subset which is associative. We are interested in partial monoids that moreover have the structure of an $E_d$-algebra. 
Essential to this construction is $\F_d$, the Fulton-MacPherson operad in dimension $d$. This is a cofibrant replacement for the little $d$-discs operad and therefore will be used to make precise the notion of a $E_d$-partial monoid. Crucially for us, the Fulton-MacPherson operad is defined in terms of configuration spaces of points, in which the particles in the configuration are allowed to collide, but keeping track of the relative position of the particles before they collided and the order in which collided. 

We follow the definition of the Fulton-MacPherson configuration space $\overline{C}_k(M)$ of a manifold $M$ as described in \cite{Sinha04}. To make it precise, we consider $M$ as embedded in some $\R^m$ and we denote by $\underline{k}$ the set $\{1,\dots,k\}$. We record the directions of particles in a collision, by defining for $(i,j)\in \widetilde{C}_2(\underline{k})$, a map $\pi_{i,j}:\widetilde{C}_k(\R^m)\to S^{m-1}$ sending a configuration $(x_1,\dots,x_k)$ to the unit vector in the direction $x_i-x_j$. The order of collision is recorded by defining for $(i,j,\ell)\in \widetilde{C}_3(\underline{k})$ the map $s_{i,j,\ell}:\widetilde{C}_k(\R^m)\to [0,\infty]$ sending $(x_1,\dots,x_k)$ to $|x_i-x_j|/|x_i-x_\ell|$.

\begin{definition}[\cite{Sinha04}, Definition 1.3]\label{def: fulton mac config space}
     The \emph{Fulton-MacPherson configuration space} $\overline{C}_k(M)$ of the manifold $M$ is the closure of the image of the map
        \[i\times (\pi_{i,j}|_{\widetilde{C}_k(M)})\times (s_{i,j,k}|_{\widetilde{C}_k(M)}):\widetilde{C}_k(M)\longrightarrow M^k\times (S^{m-1})^{k(k-1)}\times [0,\infty]^{k(k-1)(k-2)}.\]
\end{definition}

The space $\overline{C}_k(M)$ is homotopy equivalent to the ordered configuration space $\widetilde{C}_k(M)$ \cite[Corollary 4.5]{Sinha04} and whenever $M$ is compact, $\overline{C}_k(M)$ is a compactification of $\widetilde{C}_k(M)$. Moreover, this construction is functorial with respect to embeddings \cite[Corollary 4.8]{Sinha04}, i.e. any embedding $f:M\hookrightarrow N$ induces an embedding $f_*:\fmconf_k(M)\to \fmconf_k(N)$.

The following result gives a convenient way to represent elements of $\fmconf_k(M)$, which will be used throughout the chapter.

\begin{proposition}[\cite{Sinha04}, Theorem 3.8]\label{prop:describing the fulton macpherson}
  Each element in $\overline{C}_k(M)$ is uniquely determined by:  
    \begin{enumerate}
        \item A configuration of points $P_1,\dots, P_l$ in the interior of $M$, with $1\leq l\leq n$ (we refer to these as the \emph{infinitesimal configuration}),
        \item For each $1\leq i\leq l$, a tree $T_i$ with $f_i$ leaves (twigs), no bivalent vertices, so that $\sum_{i=1}^l f_i=n$, and for each vertex in $T_i$ of valence $m$ an element in $C_m(T_{P_i}M)/G(d)$, where $G(d)$ is the group of affine transformations of $\R^d$ generated by translations and positive dilations.
        \item A global ordering of the $k$ leaves of the trees.
    \end{enumerate}
\end{proposition}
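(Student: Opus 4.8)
The plan is to unwind the definition of $\overline{C}_k(M)$ from Definition \ref{def: fulton mac config space} as the closure of the image of a map into $M^k\times (S^{m-1})^{k(k-1)}\times[0,\infty]^{k(k-1)(k-2)}$, and to show that the coordinates of a limit point encode exactly the data (1)--(3). First I would extract the infinitesimal configuration. A point $q\in\overline{C}_k(M)$ has a well-defined image $(y_1,\dots,y_k)\in M^k$ under the first projection; using a collar to push approximating configurations into the interior, the $y_i$ lie in $\int M$, and the distinct values among them are the infinitesimal configuration $P_1,\dots,P_l\in\int M$. The relation $i\sim i' \iff y_i=y_{i'}$ partitions $\underline{k}$ into clusters $C_1,\dots,C_l$ with $|C_j|=f_j$, so $\sum_j f_j=k$; fixing a Euclidean chart around each $P_j$ identifies $T_{P_j}M\cong\R^d$ and reduces everything to a cluster-by-cluster analysis near the $P_j$.

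Second, I would reconstruct the tree $T_j$ and its decorations for a fixed cluster $C=C_j$ by an inductive rescaling argument. Pick an approximating sequence $x^{(n)}\in\widetilde{C}_k(M)$ with $x^{(n)}\to q$; for $i\in C$ set $\delta^{(n)}=\max_{i,i'\in C}|x_i^{(n)}-x_{i'}^{(n)}|$ and $\tilde x_i^{(n)}=(x_i^{(n)}-x_{i_0}^{(n)})/\delta^{(n)}$ for a fixed $i_0\in C$. After passing to a subsequence, $\tilde x^{(n)}$ converges in $(\R^d)^{C}$ to a configuration of diameter $1$; its distinct points — there are $m_0\ge 2$ of them — give the class in $C_{m_0}(T_{P_j}M)/G(d)$ decorating the root of $T_j$, the quotient by $G(d)$ (translations and positive dilations) exactly absorbing the choice of $i_0$ and of the rescaling factor. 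The coincidence relation on this rescaled limit refines $C$ into strictly smaller sub-clusters; recursing on each sub-cluster of size $\ge 2$ produces the children subtrees, while size-$1$ sub-clusters are leaves. The recursion terminates because cluster sizes strictly decrease, and since a vertex is created only at a genuine splitting into $m_0\ge 2$ parts, no vertex is bivalent. The original indices $1,\dots,k$, now sitting at the twigs, provide the global ordering in (3).

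Third, I would show that $q\mapsto(\text{data})$ is a bijection onto the set described. For well-definedness and injectivity the cleanest route is to read the data off intrinsically from the coordinates of $q$ rather than from a chosen sequence: $\pi_{i,j}(q)$ is the relative direction of the leaves $i,j$ at the vertex where their branches separate, and $s_{i,j,\ell}(q)\in[0,\infty]$ compares the scales at which $\{i,j\}$ and $\{i,\ell\}$ collide, so the whole system of $s$-coordinates is precisely a hierarchical clustering of $\underline{k}$, i.e.\ the forest $\{T_j\}$, while the $\pi$-coordinates, normalised level by level, recover the $G(d)$-classes at the vertices. Since $q$ is literally determined by its coordinates, different approximating sequences yield the same data and distinct points of $\overline{C}_k(M)$ yield distinct data. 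For surjectivity, given data $(1)$--$(3)$ I would build an explicit approximating family: place the $P_j$, then for small $\varepsilon>0$ nest, inside the scale-$\varepsilon^{p-1}$ neighbourhood of each depth-$(p-1)$ point, a copy of its children configuration scaled by $\varepsilon^{p}$, following the tree; these honest ordered configurations converge in the ambient product and their limit has precisely the prescribed coordinates, hence lies in $\overline{C}_k(M)$ and realises the given data.

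The main obstacle is the bookkeeping in the third step: proving that the $[0,\infty]$-valued coordinates $s_{i,j,\ell}$ of a limit point satisfy exactly the consistency relations making them equivalent to a rooted forest on the leaf set $\underline{k}$ — an ultrametric-type inequality obtained by passing to the closure the relations among $s_{i,j,\ell}$, $s_{i,\ell,m}$, $s_{i,j,m}$ that hold for honest configurations — and that the $S^{m-1}$-valued coordinates $\pi_{i,j}$, restricted and rescaled at each level, reconstruct a well-defined point of $C_m(T_{P_i}M)/G(d)$ and nothing more. This level-by-level compatibility between the directional data and the scale data is the technical heart of the statement; once it is established, (1)--(3) and the bijectivity follow by organising the above.
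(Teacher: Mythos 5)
The paper does not prove this statement: it quotes it verbatim as a citation to Sinha's Theorem~3.8 in \emph{Manifold-theoretic compactifications of configuration spaces}, and uses it as a black box. So there is no internal proof to compare against, and I can only assess your sketch against the cited source.

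Your outline is a reasonable blind reconstruction of Sinha's strategy: pass to the $M^k$-coordinate to read off the infinitesimal configuration and the clustering of indices, then use the $\pi_{i,j}$ and $s_{i,j,\ell}$ coordinates (equivalently, a level-by-level rescaling of an approximating sequence) to extract the nested tree structure and the $G(d)$-classes decorating its vertices, and finally construct explicit nested families to prove surjectivity. This is the same overall division of labour as Sinha's. You also correctly single out the genuine technical heart of the argument: that the $[0,\infty]$-valued coordinates of a limit point satisfy precisely the closure of the identities that hold on honest configurations, making the $s$-data equivalent to a rooted forest and the $\pi$-data, suitably rescaled at each level, equivalent to a $G(d)$-class of configurations in $T_{P_i}M$. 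But you leave exactly this part as ``once it is established,'' and Sinha's proof devotes most of its effort to that step (he characterizes the closure explicitly by a list of algebraic conditions on the coordinates before interpreting them tree-theoretically). As written, then, the proposal is an honest map of the proof, not a proof. One small correction: the $\pi_{i,j}$-directions live in $S^{m-1}$ via the ambient embedding $M\hookrightarrow\R^m$, and $T_{P_i}M$ is already a subspace of $\R^m$; reading the vertex decorations as elements of $C_m(T_{P_i}M)/G(d)$ should be done through that ambient identification, not through an arbitrary Euclidean chart, otherwise you would need to argue separately that the $G(d)$-class is chart-independent, which it is not a priori (quotienting by translations and dilations does not kill the derivative of a chart change).
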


This interpretation of the elements of $\fmconf_k(M)$ also provides a way of describing the map $f_*:\fmconf_k(M)\to \fmconf_k(N)$ induced by an embedding $f:M\hookrightarrow N$ into a $d'$-manifold $N$: it takes a point with infinitesimal configurations $P_1,\dots,P_l\in M$ to one with infinitesimal configurations $f(P_1),\dots,f(P_l)\in N$, it preserves the trees and ordering of the leaves, but changes the labels of the vertices of the trees by taking a label $\xi\in C_m(T_{P_i}M)/G(d)$ to the label $D_{P_i}f(\xi)\in C_m(T_{f(P_i)}N)/G(d')$.

The Fulton-MacPherson operad $\F_d$ is built out of subspaces of these configurations in $\R^d$. Intuitively, for each $k\geq 0$ the space $F_d(k)$ is the subspace of the ordered configurations of $k$ points in $\R^d$, in which the points have collided at the origin. 

\begin{definition}\label{defn: fulton mac operad}
    The \emph{Fulton-MacPherson operad} in dimension $d$, denoted $\F_d$, is defined by taking $\F_d(k)$ to be the pullback
      \[\begin{tikzcd}
            \F_d(k) \ar[r] \ar[d]  \ar[dr, phantom, "\lrcorner", very near start] & \overline{C}_k(\R^d) \ar[d]\\
            0 \ar[r] & (\R^d)^k.
        \end{tikzcd}\]
    In other words, it is the subspace of $\overline{C}_k(\R^d)$ with infinitesimal configuration given by a single point at the origin. 
    
    With the description of this space as in Proposition \ref{prop:describing the fulton macpherson}, the composition of this operad is given by grafting of trees. Pictorially, we will often represent elements of $\F_d(k)$ as trees of configurations such as in the rightmost picture of Figure \ref{fig:element of the FM operad}. 
\end{definition}

\begin{figure}[h!t]
    \centering
    \def\svgwidth{\textwidth}
    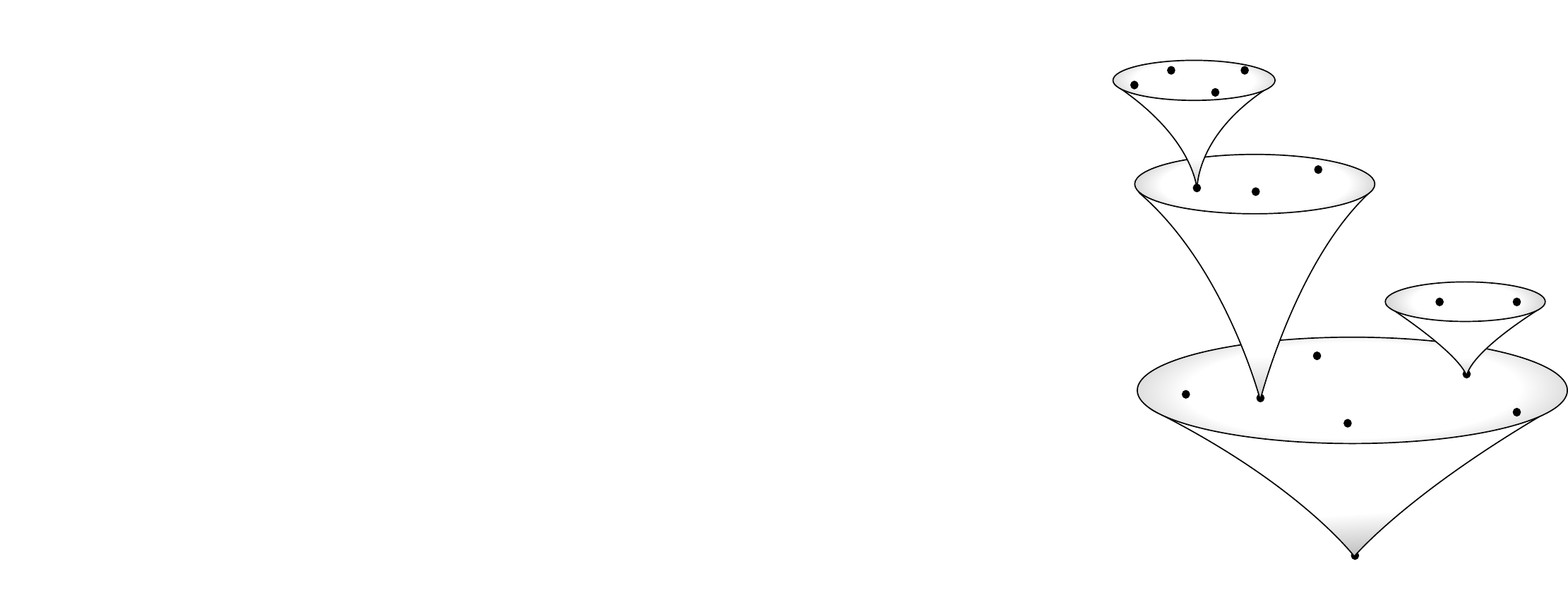
    \caption{Representation of an element in the space $\F_2(12)$ in terms of a tree of infinitesimal configurations.}
    \label{fig:element of the FM operad}
\end{figure}

As shown in \cite{Salvatore1999Configuration}, there exists a model structure on the category of topological operads in which $\F_d$ is a cofibrant replacement of the little $d$-discs operad. An algebra over $\F_d$, called by Salvatore a \emph{$d$-monoid}, consists of a space $A$ together with $\Sigma_k$-equivariant maps
    \[\F_d(k)\times A^k\to A\]
which commute with the structure maps of $\F_d$.

\begin{definition}[\cite{Salvatore1999Configuration}, Definition 1.7 and 2.6]\label{def:partial d-monoid}
    A \emph{partial $d$-monoid} is a space $P$ 
    together with monomorphisms of $\Sigma_k$-spaces
        \[i:Comp_k\hookrightarrow \F_d(k)\times_{\Sigma_k}P^k.\]
    and composition maps $\rho:Comp_k\to P$ such that 
    \begin{enumerate}
        \item The unit $\eta\times P:P\to \F_d(1)\times P$ factors uniquely through $\widetilde{\eta}:P\to Comp_1$ and the composition $\rho\circ\widetilde{\eta}$ is the identity $\id_P$;
        \item For $f\in \F_d(k)$ and $[f_j;\overline{p}_j]\subset Comp_{m_j}$, for $j=1,\dots,k$, the element
            \[[f; \rho(f_1;\overline{p}_1),\dots, \rho(f_k;\overline{p}_k)]\in \F_d(k)\times_{\Sigma_k}P^k\]
        belongs to $Comp_k$ if and only if 
            \[[\mu(f;f_1,\dots,f_k);\overline{p}_1,\dots,\overline{p}_k]\]
        belongs to $Comp_{m_1+\dots+m_k}$. Moreover, if that is the case, then their image by $\rho$ coincide.
    \end{enumerate}
    A pointed space $(P,0)$ is a \emph{partial $d$-monoid with unit} \index{partial $d$-monoid! with unit} if in addition it satisfies
        \begin{enumerate}[3.]
            \item For every $k$ there is an inclusion $u:\F_d(k)\times_{\Sigma_k} \bigvee_k P\hookrightarrow Comp_k$ such that the composition with $i\circ u$ is the subspace inclusion, and $\rho\circ i:\F_d(k)\times_{\Sigma_k} \bigvee_k P\to P$ is the projection onto the $P$ coordinate.
        \end{enumerate}
\end{definition}

The definition above is better understood when in comparison to $E_d$-algebras, which Salvatore calls $d$-monoids. Given a $d$-monoid $M$ and a infinitesimal configuration $f\in \F_d(k)$, we can always compose any $k$ elements of $M$ via the composition rule described by $f$. In a partial $d$-monoid $P$, that is not the case. We instead are given a subset of the $k$-tuples of $P$ which are composable via the operation described by a given $f\in \F_d(k)$. The space $Comp_k$ should be then thought of as the pairs of possible composition rules in $\F_d$ together with the tuples of elements of $P$ which can be composed with this composition rule. The map $\rho$ is computes the results of compositions, whenever they are defined.

\begin{example}\label{ex: partial monoids}
\begin{enumerate}[(a)]
    \item Any space $X$ admits the structure of a trivial partial $d$-monoid, by defining $Comp_1=\{[1,x]:x\in X\}$ and $Comp_k=\emptyset$, for all $k\neq 1$. In this case, the only composition rule that can be performed is the identity and no other collection of points in $X$ is composable in any way.
    \item\label{ex:trivial partial monoid} When $X$ is a space equipped with a basepoint $*$, we can define a unital partial $d$-monoid by setting $Comp_k=\F_d(k)\times\vee_k X$ and $\rho(f,\overline{x})=x_i$, where $x_i$ is the unique non-basepoint coordinate, or $*$ otherwise. In this case, the basepoint acts as a unit and compositions are only defined when done with the unit.
    \item Every $d$-monoid is trivially a partial $d$-monoid. In particular, every $\Omega^d$-space is a partial $d$-monoid.
    \item\label{item: pm - naive upgrade} The canonical inclusion $i_{d,n}:\R^d\hookrightarrow \R^{d+n}$ also allows us to construct a partial $(d+n)$-monoid $P$ from a partial $d$-monoid by adding $n$ trivial composition directions. We call this the \emph{naive upgrade of a partial $d$-monoid} $P$ and denote it $T_{d,n}P$. The underlying space of $T_{d,n}P$ is $P$, and we define $Comp_k^{T_{d,n}P}$ to be the image of
        \[Comp_k^P\hookrightarrow \F_d(k)\times_{\Sigma_k}P^k \xhookrightarrow{(i_{n,k})_*}\F_{d+n}(k)\times_{\Sigma_k}P^k=\F_{d+n}(k)\times_{\Sigma_k}(T_{d,n}P)^k.\]
\end{enumerate}
\end{example}

In \cite{Salvatore1999Configuration}, it was always assumed that the partial $d$-monoids were \emph{good}, in the sense described below. 

\begin{definition}
    A partial $d$-monoid $P$ is \emph{good}\index{partial $d$-monoid!good} if for every $k$ the inclusion $Comp^P_k\hookrightarrow \F_d(k)\times_{\Sigma_k}P^k$ is a cofibration. 
\end{definition}

From now on, we always assume partial $d$-monoids to be good and to have a unit. 
For future applications, we are further interested in partial $d$-monoids with compatible actions of $\GLd$, so we introduce this concept here and perform our constructions in this more general setting. 

Recall from Proposition \ref{prop:describing the fulton macpherson} that an element of $\F_d(k)$ is described by a tree with $k$ ordered leaves and a decoration of the vertices by elements of $\widetilde{C}_{|v|}(\R^d)/G(d)$. The action of $\GLd$ on $\R^d$ induces an action on $\widetilde{C}_{|v|}(\R^d)/G(d)$. This gives an action of this group on $\F_d(k)$ for every $k$, and it is simple to check that the operad maps $\mu$ are all $\GLd$ equivariant.

\begin{definition}[\cite{Salvatore1999Configuration}, Definition 4.3]
    The \emph{framed Fulton-MacPherson operad}\index{Fulton-MacPherson operad!framed} denoted $f\F_d$, is the operad defined by $f\F_d(k)=\F_d(k)\times \GLd^k$, with structure map 
        \begin{align*}
            \widetilde{\mu}((x,g_1,\dots,g_k);&(x_1,g_1^1,\dots,g_1^{m_1}),\dots, (x_k,g_k^1,\dots,g_k^{m_k}))\\
            & =(\mu(x;g_1x_1,\dots,g_kx_k),g_1g_1^1,\dots,g_kg_k^{m_k}).
        \end{align*}
\end{definition}

The construction above is an instance of the construction $\mmm{A}\rtimes G$, the semi-direct product of an operad $\mmm{A}$ and group $G$. This construction, and the proof that the above indeed defines an operad can be found in \cite[Definition 2.1]{SalvatoreWahl}.

\begin{definition}[\cite{Salvatore1999Configuration}, Definition 4.8]
    A \emph{framed partial $d$-monoid}\index{partial $d$-monoid!framed} with unit is a pointed $\GLd$-space $P$ together with monomorphisms of $\Sigma_k\wr\GLd$-spaces
        \[i:fComp_k\hookrightarrow f\F_d(k)\times_{\Sigma_k}P^k.\]
    and $\GLd$-equivariant composition maps $\rho:fComp_k\to P$ satisfying properties 1-3 of Definition \ref{def:partial d-monoid}.
\end{definition}

The $\GLd$-bundle of frames on $M$ induces a $(\GLd)^k$-bundle $f\fmconf_k(M)$ on $C_k(M)$, acted on by $\Sigma_k$. This is called the \emph{framed configuration space}\index{configuration space!framed} of $k$ points in $M$. Using the description of Proposition \ref{prop:describing the fulton macpherson}, an element of the space $f\fmconf_k(M)$ can be uniquely determined by an infinitesimal configuration in $M$ with labelled trees, together with additional $k$ frames of the tangent planes associated to the $k$ leaves of the trees. 

Then the space of framed configurations $f\fmconf(M)=\dcup{k}f\fmconf_k(M)$ is a right $f\F_d$-module, with $\GLd$-equivariant multiplication maps 
    \[m:f\fmconf_k(M)\times_{\Sigma_k} (f\F_d(n_1) \times \dots \times f\F_d(n_k))\to f\fmconf_{n_1+\dots+n_k}(M).\]
defined by grafting the element of $f\F_d(n_i)$ on the $i$-th leaf of the element of $f\fmconf_k(M)$, for all $i=1,\dots,k$ and using the frame on the leaves to identify $C_m(\R^d)/G(d)$ with a configuration on the tangent space of $M$ (\cite[Proposition 4.5]{Salvatore1999Configuration}). It is simple to verify that any co-dimension zero embedding $e:M\hookrightarrow N$ induces a right $f\F_d$-homomorphism $e_*:f\fmconf(M)\hookrightarrow f\fmconf(N)$.

\begin{definition}[\cite{Salvatore1999Configuration}, Definition 4.14]\label{def:configs with summable labels}
    Let $P$ be a framed partial $d$-monoid, and let $M$ be a manifold of dimension $d$. Then the space of configurations in $M$ with partially summable labels in $P$, denoted $\Csum(M;P)$ is defined as the co-equalizer of the following
        \[\begin{tikzcd}[column sep=65pt]
            {\coprod\limits_{k}\left[ f\fmconf_k(M)\times_{\Sigma_k} \left( \coprod\limits_{\pi\in\Map(n,k)} \prod\limits_{i=1}^k fComp_{\pi^{-1}(i)}^P
            \right)\right]} \ar[r, shift right, "{(m\times \id)\circ(\id\times i)}"'] \ar[r, shift left, "\id\times \rho^k"] & {\coprod\limits_{k} f\fmconf_k(M)\times_{\Sigma_k} P^k} 
        \end{tikzcd}\]
\end{definition}

An element of $\Csum(M;P)$ is then an equivalence class of elements in $\dcup{k} f\fmconf_k(M)\times_{\Sigma_k} P^k$.  
From the description of $f\fmconf_k(M)$ above, we can see that an element in $\Csum(M;P)$ can be represented by an infinitesimal configuration $w_1,\dots,w_l$ of $\ell<k$ points in $M$ together with trees $T_i$, for $i=1,\dots,\ell$, where the vertices of $T_i$ are labelled by elements in $x^i_j\in f\F_d$, and the leaves of the trees are labelled by elements of $p_k^i\in P$. The equivalence relation defining $\Csum(M;P)$ implies that if some leaves labelled by $p_1,\dots,p_k$ are departing from a vertex labelled by $x\in f\F_d(k)$ and the composition $\rho(x;p_1,\dots,p_k)$ is defined, then we identify this configuration with the one in which such leaves are removed and their vertex is replaces by a leaf labelled by $\rho(x;p_1,\dots,p_k)$.

For a framed partial $d$-monoid $P$ and a co-dimension zero embedding $f:M\hookrightarrow N$, we get an induced map
    \[f_*:\coprod\limits_{k} f\fmconf_k(M)\times_{\Sigma_k} P^k\to \coprod\limits_{k} f\fmconf_k(N)\times_{\Sigma_k} P^k.\]
With the description above, a point in the domain with infinitesimal configurations $w_1,\dots,w_\ell$ in $M$, with trees $T_i$, for $i=1,\dots,\ell$, where the vertices of $T_i$ are labelled by elements $x_j^i\in f\F_d$, and the leaves of the trees are labelled by elements of $p_k^i\in P$, is taken to the point with infinitesimal configuration $\phi(x_1),\dots,\phi(x_\ell)$, trees $T_i$, $i=1,\dots,\ell$, and corresponding labels $$d_{x_i}\phi\cdot x^i_j\, \text{ and }\, d_{x_i}\phi\cdot p_k^i.$$ Here we are using the standard actions of $\GLd$ on $f\F_d$ and $P$.
By the equivariance condition in the definition of a framed partial monoid, the map preserves the equivalence classes described above. Therefore any such co-dimension zero embedding induces a map 
    \begin{align}\label{eq: map induced by cod-0 embedding}
        f_*:\Csum(M;P)\to \Csum(N;P).
    \end{align}

Seeing the group $\diff(M)$ as a subspace of $\Emb(M,M)$, the above construction shows that $\Csum(M;P)$ admits an action of $\diff(M)$. We will be interested in a decoupling theorem for the space $\Csum(M;P)\parallelsum\diff(M)$.

\subsection{Disc models for configuration spaces with partially summable labels}\label{sec:poset-surrounded-config}

Let $M$ be a smooth compact $d$-manifold, possibly with boundary, and $P$ a framed partial $d$-monoid. The group $\Sigma_k\wr\GLd$ has a canonical inclusion into $\diff(\dcup{k}\R^d)$. We consider $\Emb(\dcup{k} \R^d, M)$ to be a $\Sigma_k\wr\GLd$-space with action given by pre-composition by the inverse, and $\Csum(\dcup{k}\R^d;P)$ to be a $\Sigma_k\wr\GLd$-space with action induced by the action of $\diff(\dcup{k}\R^d)$, as described in the previous section.

\begin{definition}[\cite{Manthorpe-Tillmann}]\label{def: tubular configurations}
    Let $Z$ be a pointed $\GLd$-space, The \emph{space of tubular configurations} in $M$ with labels in $Z$, denoted $\Tub(M;Z)$, is the quotient
        \[\left(\coprod\limits_{k\geq 0}  \Emb(\dcup{k} \R^d, M) \pushout{\Sigma_k\wr \GLd} Z^k \right)\bigg/\sim\]
    where $(e_1,\dots,e_k;z_1,\dots,z_k)\sim(e_1,\dots,e_{k-1};z_1,\dots,z_{k-1})$
    whenever $z_k$ is the basepoint of $Z$, for $e_i:\R^d\hookrightarrow M$ and $z_i\in Z$. 
\end{definition}

The space $\Tub(M;Z)$ is equipped with an action of $\diff(M)$: for $\psi\in\diffM$, an embedding $e:\dcup{k}\R^d\hookrightarrow M$, and $\overline{z}=(z_1,\dots,z_k)\in Z^k$, we define
    \[\phi\cdot [e,\overline{z}]=[\phi\circ e;D_{e(0_1)}\phi\cdot z_1,\dots,D_{e(0_k)}\phi\cdot z_k]\]
where $0_i$ denotes the origin of the $i$-th component of $\dcup{k}\R^d$.

\begin{lemma}[\cite{Manthorpe-Tillmann}, Propositions 2.7, 2.8, and 3.6]\label{lemma: tubular configurations}
    The inclusion of the origin $i:*\hookrightarrow\R^d$ induces a $\diffM$-equivariant weak equivalence
        \[i^*:\Tub(M;Z)\xrightarrow{\simeq} C(M;Z).\]
\end{lemma}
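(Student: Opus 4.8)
The statement is due to \cite{Manthorpe-Tillmann}, so I will only outline the argument. The plan is to reduce to one particle-cardinality at a time and then recognise the evaluation-at-origins map as, equivariantly, the tangent frame bundle up to homotopy. First I would put on both $\Tub(M;Z)$ and $C(M;Z)$ the filtration by the number of non-basepoint particles; since $Z$ is well-pointed these filtration inclusions are cofibrations, $i^*$ preserves the filtration, and --- exactly as in the proof of Theorem~\ref{thm: decoupling labelled configurations} --- by the gluing lemma for cofibrations it suffices to show that for each $k$ the map
\[
  \Emb(\textstyle\coprod_k\R^d,M)\pushout{\Sigma_k\wr\GLd}Z^k\;\longrightarrow\;\Fr_k(M)\pushout{\Sigma_k\wr\GLd}Z^k
\]
induced by restricting an embedding to the $k$ origins is a weak equivalence. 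Here $\Fr_k(M)$ denotes the principal $\GLd^k$-bundle over $\widetilde{C}_k(M)$ of $k$-tuples of tangent frames at distinct points, with $\Sigma_k$ permuting; its balanced product with $Z^k$ along $\Sigma_k\wr\GLd$ is the $k$-th stratum of $C(M;Z)$ for a $\GLd$-space $Z$, and reduces to $\widetilde{C}_k(M)\pushout{\Sigma_k}Z^k$ once a framing of $M$ is fixed, as the paper does for surfaces.

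Next I would analyse the evaluation map $\mathrm{ev}\colon\Emb(\coprod_k\R^d,M)\to\widetilde{C}_k(M)$, $e\mapsto(e_1(0),\dots,e_k(0))$. By the parametrised isotopy extension theorem (\cite{MR117741}; equivalently, pass to embeddings of discs and restrict to centres), this is a Serre fibration; it is $\Sigma_k\wr\GLd$-equivariant for the permutation-and-precomposition action on the source and the permutation action on the base, and $\diff(M)$-equivariant by naturality in codimension-zero embeddings. Its fibre over $(m_1,\dots,m_k)$ is $\prod_i\Emb_{m_i}(\R^d,M)$, the embeddings sending $0$ to $m_i$. The crux is a sublemma: the derivative-at-the-origin map $D_0\colon\Emb_m(\R^d,M)\to\mathrm{Iso}(\R^d,T_mM)=\Fr(T_mM)$ is a $\GLd$-equivariant deformation retraction (source: precomposition; target: the right $\GLd$-action on frames). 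I would prove this by fixing a chart centred at $m$ and using the ``zoom'' homotopy $e\mapsto\bigl(x\mapsto\tfrac1t\,e(tx)\bigr)$ for $t\in(0,1]$: it is the identity at $t=1$, stays within $\Emb_m(\R^d,M)$ after an overall rescaling to remain inside the chart, limits as $t\to0^+$ to the linear embedding $D_0e$, and manifestly commutes with precomposition by $\GLd$. Combined with the observation that $\GLd^k$ acts freely on the source (embeddings are injective), this shows $\mathrm{ev}$ is an equivariant weak equivalence from a space with a free $\GLd^k$-action onto the principal bundle $\Fr_k(M)$.

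It then remains to pass to balanced products: since the Borel construction $-\pushout{\Sigma_k\wr\GLd}Z^k$ sends equivariant weak equivalences between (free) $\GLd^k$-spaces to weak equivalences, the displayed $k$-point map is a weak equivalence; these maps are compatible with the basepoint-collapse relations (the origin of the $(k{+}1)$-st factor goes to the $(k{+}1)$-st point, which disappears precisely when its label is the basepoint), so the filtration and gluing-lemma step of the first paragraph assembles them into $i^*\colon\Tub(M;Z)\xrightarrow{\;\simeq\;}C(M;Z)$, and naturality of every map in codimension-zero self-embeddings of $M$ gives $\diff(M)$-equivariance. The only genuinely geometric input is the fibration property of $\mathrm{ev}$ together with the equivariant ``zoom'' retraction of its fibres onto frames --- with the point-set care that it is continuous up to $t=0$, stays inside embeddings and inside $M$, and is $\GLd$-equivariant; I expect that to be the main obstacle, while passing to Borel constructions, the bookkeeping over the number of particles (using well-pointedness of $Z$), and the equivariance are formal.
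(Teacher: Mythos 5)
The paper does not prove this lemma: it is quoted with a citation to Manthorpe--Tillmann, so there is no in-paper proof to compare against. Your reconstruction is the standard argument (stratification by particle count, isotopy-extension fibration of evaluation-at-origins, retraction of the fibre onto frames, descent to balanced products), and the reduction to a framing of $M$ is handled correctly --- without it the target of the evaluation-at-origins map is the frame-bundle-twisted stratum $\Fr_k(M)\pushout{\Sigma_k\wr\GLd}Z^k$ rather than $\widetilde{C}_k(M)\pushout{\Sigma_k}Z^k$, and the paper indeed fixes a framing on $\Fgb$.

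There is, however, a concrete gap in the ``zoom'' step, which you do flag as the likely obstacle. The formula $e\mapsto(x\mapsto\tfrac1t e(tx))$ does not stay inside $\Emb_m(\R^d,M)$, and the ``overall rescaling to remain inside the chart'' cannot be a rescaling of the domain: precomposing an open embedding of $\R^d$ with $x\mapsto\lambda x$ is a bijection of $\R^d$ and leaves the image $e(\R^d)$ unchanged, so it never pushes $e$ into the chart. Moreover, even once $e(\R^d)\subset U$, the zoomed image is $\tfrac1t\,\phi(e(\R^d))$, which dilates out of the bounded set $\phi(U)$ as $t\to0$. The actual retraction needs a preliminary precomposition with an open embedding $\sigma:\R^d\to\R^d$ onto a bounded ball (with $\sigma(0)=0$, $D_0\sigma=\mathrm{id}$), whose scale must be chosen continuously in $e$, before the zoom; and the ``frames'' subspace has to be taken inside $\Emb_m(\R^d,M)$ via the chart rather than literally as $\mathrm{Iso}(\R^d,T_mM)$. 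A related point: you should not insist that this retraction be $\GLd$-equivariant --- making the shrinking scale equivariant is awkward --- and you do not need it. Since $D_0:\Emb(\coprod_k\R^d,M)\to\Fr_k(M)$ is already an equivariant map between free, proper $\GLd^k$-spaces, a non-equivariant weak equivalence statement for it descends to the balanced products with $Z^k$ via the comparison of the induced $Z^k$-bundles over the quotients. The rest of your outline (fibration via \cite{MR117741}, bookkeeping over $k$ using well-pointedness, $\diffM$-equivariance by naturality) is fine.
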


One can think of $\Tub(M;Z)$ as disc models for configuration spaces with labels. To construct a disc model for summable labels, we need much more structure:

\begin{definition}\label{def: space of surrounded configurations}
    The \emph{space of surrounded configurations} in $M$ with summable labels in $P$, denoted $\Dsum(M;P)$, is the quotient
        \[\left(\coprod\limits_{k\geq 0}  \Emb(\dcup{k} \R^d, M) \pushout{\Sigma_k\wr \GLd} \Csum(\dcup{k}\R^d;P) \right)\bigg/\sim\]
    where $(e:\dcup{m}\R^d\to M,\xi)\sim(e':\dcup{n}\R^d\to M,\xi')$ if there are injections $\underline{k}\hookrightarrow\underline{m}$ and $\underline{k}\hookrightarrow\underline{n}$ such that the induced inclusions $i_1:\dcup{k}\R^d\to \dcup{m}\R^d$ and $i_2:\dcup{k}\R^d\to \dcup{n}\R^d$ satisfy
        \begin{align*}
            \xi\subset Im\, i_1, \,& \xi'\subset Im\, i_2 & e\circ i_1&=e'\circ i_2, & \text{and } e_*(\xi)=e'_*(\xi').
        \end{align*}
    Here $e_*$ denotes the map of configurations with summable labels induced by a codimension zero embedding, as described in \eqref{eq: map induced by cod-0 embedding}. Since the Fulton-MacPherson configuration spaces are functorial for co-dimension zero embeddings, we get a map
        \begin{align}\label{eq: augmentation map on 0}
            p:\Dsum(M;P)\to \Csum(M;P)
        \end{align}
    taking a class $(e,\xi)$ to the configuration $e_*(\xi)$. By definition, this map does not depend on the choice of representative $(e,\xi)$. 
    The space $\Dsum(M;P)$ admits a partial ordering by declaring  $(e,\xi)<(e',\xi')$ if, for some representative of the classes, $e(\xi)=e'(\xi')$ and $Im\, e\supset Im\, e'$. We denote by $\Dsum(M;P)_\bullet$ the semi-simplicial nerve of the poset $\Dsum(M;P)$. 
\end{definition}

By the definition of the partial order, the map $p$ induces an augmentation $\Dsum(M;P)_\bullet\to \Csum(M;P)$. In particular, $\Dsum(M;P)_\bullet$ is an augmented topological flag complex (Definition \ref{def: topological flag complex}), since the space of $n$-simplices is indeed an open subspace of the $(n+1)$-tuples of vertices with the same image under the augmentation map, and the condition for a tuple to form an $n$-simplex is just given by checking the pairwise order relation. 
It might be helpful to keep in mind the following visualisation for elements in the space $|\Dsum(M;P)_\bullet|$: any point can be expressed by a tuple 
    \begin{align*}
        ((e_0,\xi_0)<\dots<(e_k,\xi_k);t_0,\dots,t_k) && t_0\dots+t_k=1.
    \end{align*}
The poset construction implies that for such a tuple $e_0(\xi_0)=\dots=e_k(\xi_k)$ and $Im\, e_0\supset\dots \supset Im\, e_k$. We can visualise it as a collection of descending embedded discs around the configuration $e_i(\xi_i)$ in $M$. Each collection of embedded discs has weights adding up to $1$ and when the weight associated to a collection of embeddings goes to zero, that collection disappears. The equivalence relation guarantees that we can always get a representative for the collection of discs such that each component has at least one particle of the configuration inside it. See Figure \ref{fig:visualising the poset}.

\begin{figure}[h!t]
    \centering
    \def\svgwidth{0.5\textwidth}
\begingroup%
  \makeatletter%
  \providecommand\color[2][]{%
    \errmessage{(Inkscape) Color is used for the text in Inkscape, but the package 'color.sty' is not loaded}%
    \renewcommand\color[2][]{}%
  }%
  \providecommand\transparent[1]{%
    \errmessage{(Inkscape) Transparency is used (non-zero) for the text in Inkscape, but the package 'transparent.sty' is not loaded}%
    \renewcommand\transparent[1]{}%
  }%
  \providecommand\rotatebox[2]{#2}%
  \newcommand*\fsize{\dimexpr\f@size pt\relax}%
  \newcommand*\lineheight[1]{\fontsize{\fsize}{#1\fsize}\selectfont}%
  \ifx\svgwidth\undefined%
    \setlength{\unitlength}{409.14758135bp}%
    \ifx\svgscale\undefined%
      \relax%
    \else%
      \setlength{\unitlength}{\unitlength * \real{\svgscale}}%
    \fi%
  \else%
    \setlength{\unitlength}{\svgwidth}%
  \fi%
  \global\let\svgwidth\undefined%
  \global\let\svgscale\undefined%
  \makeatother%
  \begin{picture}(1,0.2771984)%
    \lineheight{1}%
    \setlength\tabcolsep{0pt}%
    \put(0,0){\includegraphics[width=\unitlength,page=1]{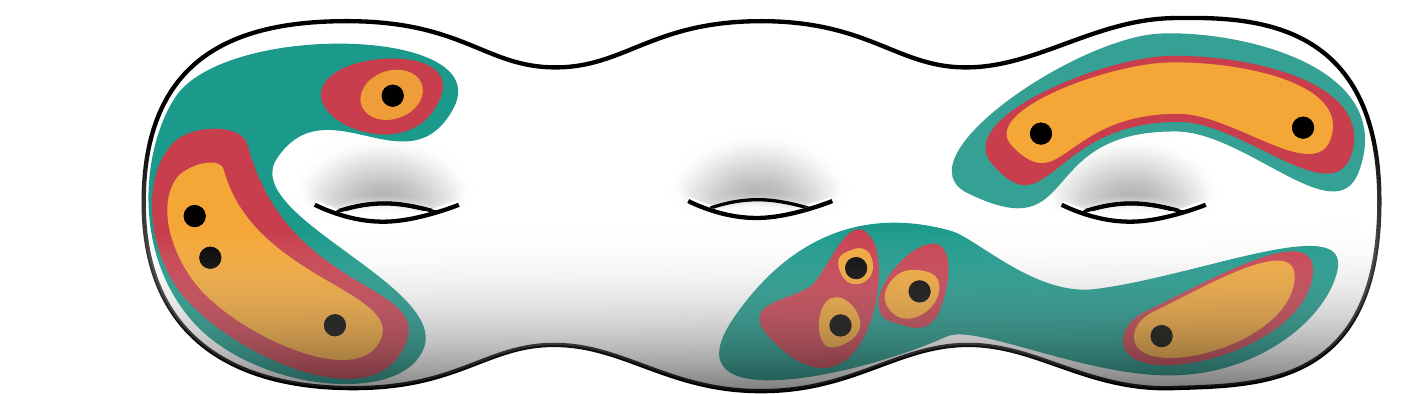}}%
    \put(0.13683004,0.24869644){\color[rgb]{0.10588235,0.6,0.54509804}\makebox(0,0)[rt]{\lineheight{1.25}\smash{\begin{tabular}[t]{r}$\bm{t_0}$\end{tabular}}}}%
    \put(0.09894836,0.06387053){\color[rgb]{0.89411765,0.65882353,0.31372549}\makebox(0,0)[rt]{\lineheight{1.25}\smash{\begin{tabular}[t]{r}$\bm{t_2}$\end{tabular}}}}%
    \put(0.09100541,0.15093729){\color[rgb]{0.78431373,0.24313725,0.30196078}\makebox(0,0)[rt]{\lineheight{1.25}\smash{\begin{tabular}[t]{r}$\bm{t_1}$\end{tabular}}}}%
  \end{picture}%
\endgroup%

    \caption{Element in a $2$-simplex of $|\Dsum(F_3)|$.}
    \label{fig:visualising the poset}
\end{figure}

\begin{proposition}\label{prop: weak equivalence with poset one}
    The map $p:\Dsum(M;P)\to \Csum(M;P)$ induces a weak homotopy equivalence
        \[|\Dsum(M;P)_\bullet|\to \Csum(M;P).\]
\end{proposition}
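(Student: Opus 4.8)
The plan is to apply the criterion of Theorem~\ref{thm:augmented semi-simplicial spaces and realisation} to the augmented semi-simplicial space $p\colon\Dsum(M;P)_\bullet\to\Csum(M;P)$. As observed right after Definition~\ref{def: space of surrounded configurations}, this augmentation is an augmented topological flag complex, so it remains to verify conditions (i)--(iii) of that theorem for $\varepsilon=p$. The geometric input behind all three checks is that, by Proposition~\ref{prop:describing the fulton macpherson}, every element of $\Csum(M;P)$ has an underlying \emph{infinitesimal configuration} of finitely many points in the interior of $M$, with all remaining data (the decorated trees, the $\GLd$-frames, the labels in $P$) attached infinitesimally at those points. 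Hence a configuration $\xi$ is ``carried by'' any open $V\subseteq\mathrm{int}\,M$ containing its infinitesimal configuration, in the sense that for any codimension-zero embedding $e\colon\coprod\R^d\hookrightarrow M$ with $\mathrm{Im}\,e=V$ there is a \emph{unique} $\xi'\in\Csum(\coprod\R^d;P)$ with $e_\ast(\xi')=\xi$, where $e_\ast$ is as in~\eqref{eq: map induced by cod-0 embedding}. This is the same mechanism that yields Lemma~\ref{lemma: tubular configurations}, now adapted to partially summable labels.

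Conditions (ii) and (iii) are then immediate. For surjectivity, given $\xi\in\Csum(M;P)$ with infinitesimal configuration $w_1,\dots,w_\ell$, pick an embedding $e\colon\coprod_{i=1}^\ell\R^d\hookrightarrow M$ whose image is a disjoint union of small balls about the $w_i$; then $(e,\xi')$, for $\xi'$ the unique preimage of $\xi$, is a vertex of $\Dsum(M;P)_\bullet$ with $p(e,\xi')=\xi$. For (iii), given $\xi$ and vertices $v_j=(e_j,\xi_j)$ with $p(v_j)=\xi$ for $j=1,\dots,n$, the set $\bigcap_j\mathrm{Im}\,e_j$ is open and contains the infinitesimal configuration of $\xi$; choosing $e\colon\coprod\R^d\hookrightarrow M$ as above but with image contained in $\bigcap_j\mathrm{Im}\,e_j$ (and, after shrinking, strictly smaller than each $\mathrm{Im}\,e_j$), and letting $\xi'$ be the unique preimage, the vertex $v=(e,\xi')$ satisfies $e(\xi')=\xi=e_j(\xi_j)$ and $\mathrm{Im}\,e_j\supsetneq\mathrm{Im}\,e$ for all $j$, so $v_j<v$ by Definition~\ref{def: space of surrounded configurations} and $(v_j,v)$ spans a $1$-simplex for every $j$.

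Condition (i), the local lifting property, is the substantive point. Let $f\colon D^n\to\Csum(M;P)$ and let $(e,\xi)\in p^{-1}(f(x))$; the infinitesimal configuration of $f(x)$ lies in the open set $\mathrm{Im}\,e$. By continuity of $f$, together with the fact that along a small deformation the infinitesimal configuration of $f(y)$ consists of points near those of $f(x)$---collisions and cancellations via $\rho$ only merge or delete particles and do not move them macroscopically---there is an open neighbourhood $U\ni x$ such that the infinitesimal configuration of $f(y)$ lies in $\mathrm{Im}\,e$ for all $y\in U$. Then $f(y)=e_\ast(\xi_y)$ for the unique $\xi_y\in\Csum(\coprod\R^d;P)$, and $y\mapsto\xi_y$ is continuous; the map $F\colon U\to\Dsum(M;P)$, $F(y)=(e,\xi_y)$, is the required lift with $F(x)=(e,\xi)$ and $p\circ F=f|_U$. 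With (i)--(iii) established, Theorem~\ref{thm:augmented semi-simplicial spaces and realisation} gives that $|\Dsum(M;P)_\bullet|\to\Csum(M;P)$ is a weak homotopy equivalence.

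The main obstacle is making precise the two claims emphasised above: that a configuration with partially summable labels whose infinitesimal configuration lies in a fixed $\mathrm{Im}\,e$ is canonically and \emph{continuously} the $e_\ast$-image of a configuration on $\coprod\R^d$, and that a small deformation cannot push the infinitesimal configuration out of a fixed neighbourhood. Both require working directly with the Fulton--MacPherson topology on $f\fmconf_k(M)$ and on $\Csum(M;P)$, and checking compatibility with the $\Sigma_k\wr\GLd$-quotients appearing in Definitions~\ref{def: space of surrounded configurations} and~\ref{def:configs with summable labels}; once these continuity statements are in place, the rest is a routine unwinding of definitions.
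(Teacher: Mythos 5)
Your proposal follows the same route as the paper: both apply Theorem~\ref{thm:augmented semi-simplicial spaces and realisation} to the augmented topological flag complex $p\colon\Dsum(M;P)_\bullet\to\Csum(M;P)$ and verify conditions (i)--(iii) in essentially the same way, with (ii) and (iii) handled by shrinking embedded discs around the infinitesimal configuration. Your verification of (i) is the same idea as the paper's (the paper phrases it a bit more economically, by observing directly that the set $V$ of configurations supported inside $\mathrm{Im}\,e$ is open in $\Csum(M;P)$ and taking $U=f^{-1}(V)$, $F(y)=(e,e^{-1}(f(y)))$); the continuity/openness points you flag at the end are indeed the crux, but the paper treats them as evident from the Fulton--MacPherson topology rather than spelling them out.
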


The proof of the above will be a direct application of Theorem \ref{thm:augmented semi-simplicial spaces and realisation} \cite[Theorem 6.2]{MR3207759}.

\begin{proof}
    As discussed before, the augmented semi-simplicial space $p:\Dsum(M;P)_\bullet\to \Csum(M;P)$ is an augmented topological flag complex. We need to verify that the hypotheses of Theorem \ref{thm:augmented semi-simplicial spaces and realisation} are satisfied:
        \begin{enumerate}[(i)]
            \item To see that $p:\Dsum(M;P)\to \Csum(M;P)$ has local sections, take $f:D^n\to \Csum(M;P)$ and a point $(e,\xi)\in p^{-1}(f(x))$. Then the image of $e$ is an open subset of $M$ containing $f(x)$, and therefore the subspace $V$ of configurations contained in $\Ima e$ is an open subset of $\Csum(M;P)$ containing $f(x)$. Let $U=f^{-1}(V)$, which is an open neighbourhood of $x$ in $D^n$. Then the map $F:U\to \Dsum(M;P)$ taking $y$ to $(e,e^{-1}(f(y)))$ satisfies $p\circ F=f|_U$ and $F(x)=(e,\xi)$.
            \item $p:\Dsum(M;P)\to \Csum(M;P)$ is surjective, since any configuration $\xi$ admits a tubular neighbourhood $e$. Then $(e,e^{-1}(\xi))$ is an element of $\Dsum(M;P)$ in the pre-image of $\xi$.
            \item For a configuration $\xi\in \Csum(M;P)$ and a non-empty finite subset $\{(e_1,\xi_1),\dots,(e_k,\xi_k)\}$ in its pre-image, we can always find $e$ an embedding of $\R^d$'s containing $e_i(\xi_i)$ and contained in all $e_i$. For instance by taking $e$ to small open discs around the points in the configuration.
        \end{enumerate}
    Then by Theorem \ref{thm:augmented semi-simplicial spaces and realisation}, the map $|\Dsum(M;P)_\bullet|\to \Csum(M;P)$ is a weak homotopy equivalence.
\end{proof}

The space $\Dsum(M;P)$ is equipped with an action of $\diff(M)$: for $\psi\in\diffM$, an embedding $e:\dcup{k}\R^d\hookrightarrow M$, and $\xi=(x_1,\dots,x_k;p_1,\dots,p_k)$ a configuration of points in $\dcup{k}\R^d$ with labels in $P$, we define $\phi\cdot [e,\xi]=[\phi\circ e,\xi]$.
It is simple to verify this action is well-defined and preserves the partial order in $\Dsum(M;P)$. It follows directly from the definition that the augmentation map $p:\Tsum(M;P)\to \Csum(M;P)$ is $\diffM$-equivariant.

Since the partial order in $\Dsum(M;P)$ is compatible with the $\diffM$-action, it induces a fibrewise partial order on $\Dsum(M;P)\times_\diffM \Emb(M,\Rinf)$ over $B\diffM$. Then the semi-simplicial nerve of the poset $\Dsum(M;P)\times_{\diffM} \Emb(M,\Rinf)$ is simply is the fibrewise semi-simplicial nerve $\Dsum(M;P)_\bullet\times_\diffM \Emb(M,\Rinf)$.

\begin{corollary}\label{cor:disc model for borel construction}
    The map $\Dsum(M;P)\to \Csum(M;P)$ induces a weak homotopy equivalence
        \[|\Dsum(M;P)_\bullet\pushout\diffM \Emb(M,\Rinf)|\longrightarrow \Csum(M;P)\parallelsum\diff(M).\]
\end{corollary}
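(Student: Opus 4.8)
The plan is to deduce this directly from Proposition \ref{prop: weak equivalence with poset one} by applying the fibrewise Borel construction over $B\diffM$ degreewise and checking that it commutes with geometric realisation; essentially all of the content already sits in Proposition \ref{prop: weak equivalence with poset one}, and what remains is point-set bookkeeping.

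First I would record that the augmentation $p\colon\Dsum(M;P)_\bullet\to\Csum(M;P)$, with the target viewed as a constant semi-simplicial space, is $\diffM$-equivariant and that the poset structure on $\Dsum(M;P)$ is $\diffM$-invariant, so $\Dsum(M;P)_\bullet$ is a semi-simplicial $\diffM$-space lying over the $\diffM$-space $\Csum(M;P)$. Applying $(-)\pushout{\diffM}\Emb(M,\Rinf)$ degreewise then produces an augmented semi-simplicial space whose augmentation target is $\Csum(M;P)\pushout{\diffM}\Emb(M,\Rinf)=\Csum(M;P)\parallelsum\diffM$, and, as already observed in the text, this degreewise construction is exactly the semi-simplicial nerve of the fibrewise poset $\Dsum(M;P)\pushout{\diffM}\Emb(M,\Rinf)$.

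Second, I would use that geometric realisation of semi-simplicial spaces is a colimit (a coend over $\Delta_{inj}$) and that, working in a convenient category of spaces, both $(-)\times\Emb(M,\Rinf)$ and passage to $\diffM$-orbits preserve colimits. Hence
\[\bigl|\,\Dsum(M;P)_\bullet\pushout{\diffM}\Emb(M,\Rinf)\,\bigr|\;\cong\;\bigl|\,\Dsum(M;P)_\bullet\,\bigr|\pushout{\diffM}\Emb(M,\Rinf),\]
and likewise the constant semi-simplicial space on the right realises to $\Csum(M;P)\pushout{\diffM}\Emb(M,\Rinf)$. This identifies the map in the statement with the map obtained by applying $(-)\pushout{\diffM}\Emb(M,\Rinf)$ to the $\diffM$-equivariant weak equivalence $\bigl|\Dsum(M;P)_\bullet\bigr|\to\Csum(M;P)$ of Proposition \ref{prop: weak equivalence with poset one}.

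Finally, since $\Emb(M,\Rinf)\to B\diffM$ is a principal $\diffM$-bundle \cite{MR613004}, both sides are total spaces of fibre bundles over $B\diffM$ with fibres $\bigl|\Dsum(M;P)_\bullet\bigr|$ and $\Csum(M;P)$, and the comparison map covers the identity of $B\diffM$, restricting on each fibre to the weak equivalence of Proposition \ref{prop: weak equivalence with poset one}. Comparing the long exact sequences of homotopy groups of the two fibrations and applying the five lemma shows the comparison map is a weak homotopy equivalence. I expect the only delicate point to be the commutation of geometric realisation with the fibrewise Borel construction, which must be carried out so that products by a fixed space and orbit quotients commute with realisation; everything else is formal.
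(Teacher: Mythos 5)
Your proof is correct and supplies exactly the bookkeeping the paper leaves implicit: the paper's own proof is the one-line remark that the corollary ``follows directly from the definition of the partial order on $\Dsum(M;P)\times_\diffM\Emb(M,\Rinf)$ and Proposition~\ref{prop: weak equivalence with poset one}'', which amounts to the same reduction you make. Your steps — that the $\diffM$-equivariance of the augmentation and of the partial order lets you form the fibrewise nerve, that geometric realisation (a coend, hence a colimit) commutes with $-\times\Emb(M,\Rinf)$ and with $-/\diffM$ in a convenient category so that $|\Dsum(M;P)_\bullet\pushout{\diffM}\Emb(M,\Rinf)|\cong|\Dsum(M;P)_\bullet|\pushout{\diffM}\Emb(M,\Rinf)$, and that both sides are then associated bundles over $B\diffM$ on which the comparison map restricts fibrewise to the weak equivalence of Proposition~\ref{prop: weak equivalence with poset one} — are all sound, and the five-lemma comparison of the long exact sequences of the two Serre fibrations finishes the argument as you say.
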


The above follows directly from the definition of the definition of the partial order on $\Dsum(M;P)\times_\diffM \Emb(M,\Rinf)$ and  Proposition \ref{prop: weak equivalence with poset one}.

Corollary \ref{cor:disc model for borel construction} allows us to use a disc model for the space of configurations with partially summable labels when proving the decoupling in this setting. We finish this section by introducing another space of discs and configurations which will be used in the decoupling.

\begin{definition}\label{def: thin tubular configurations}
    Let $Z$ be a pointed $\GLd$-space and let $M$ be a smooth compact manifold of dimension $n>d$. The \emph{space of $d$-tubular configurations} in $M$ with labels in $Z$, denoted $\Tub^d(M;Z)$, is the quotient
        \[\left(\coprod\limits_{k\geq 0}  \Emb(\dcup{k} \R^d, M) \pushout{\Sigma_k\wr \GLdp} Z^k \right)\bigg/\sim\]
    where $(e_1,\dots,e_k;z_1,\dots,z_k)\sim(e_1,\dots,e_{k-1};z_1,\dots,z_{k-1})$
    whenever $z_k$ is the basepoint of $Z$, for $e_i:\R^d\hookrightarrow M$ and $z_i\in Z$. 
\end{definition}

The difference between the spaces $\Tub^d(M;Z)$ and $\Tub(M;Z)$ (Definition \ref{def: tubular configurations}) is that on the former we look at embedded discs of a lower dimension than the ambient manifold $M$.

\begin{lemma}\label{lemma: thin tubular configurations}
    Let $n>d$, and denote by $E_{d,n}$ denote the total space of the canonical $\GLdp$-bundle over the oriented Grassmanian $Gr^+(d,n)$. The inclusion of the origin $i:*\hookrightarrow\R^d$ induces a weak equivalence
        \[i^*:\Tub^d(\R^n;Z)\xrightarrow{\simeq} C(M;(E_{d,n})_+\pointedpushout{\GLdp}Z).\]
\end{lemma}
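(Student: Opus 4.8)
The plan is to reduce this to Lemma \ref{lemma: tubular configurations}, applied to the total space of a suitable bundle over $\R^n$, and then to identify that bundle. First I would observe that giving an embedding $e:\R^d\hookrightarrow\R^n$ is, up to $\GLdp$-equivariant homotopy, the same as recording the basepoint $e(0)\in\R^n$ together with the oriented $d$-plane $D_0 e(\R^d)\subseteq T_{e(0)}\R^n\cong\R^n$ and a frame of it; scaling the embedding towards its $1$-jet at the origin gives a $\GLdp$-equivariant deformation retraction of $\Emb(\R^d,\R^n)$ onto the space of affine injections, and the latter is precisely the total space of the pulled-back frame bundle $\R^n\times E_{d,n}$, with $E_{d,n}\to Gr^+(d,n)$ as in the statement. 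This is the homotopy-theoretic content; carrying it out $\Sigma_k\wr\GLdp$-equivariantly for configurations of $k$ discs (using that distinct basepoints keep the jets disjoint) shows that
\[
\Emb\!\Big(\dcup{k}\R^d,\R^n\Big)\;\simeq\;\widetilde{C}_k(\R^n)\times(E_{d,n})^k
\]
as $\Sigma_k\wr\GLdp$-spaces.

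Second I would feed this equivalence through the bar construction / labelled-configuration functor. Smashing with $Z^k$, quotienting by $\Sigma_k\wr\GLdp$, and passing to the colimit over $k$ with the basepoint relation, the displayed equivalence yields
\[
\Tub^d(\R^n;Z)\;\simeq\;C\big(\R^n;(E_{d,n})_+\pointedpushout{\GLdp}Z\big),
\]
where on the right one first forms the diagonal quotient $(E_{d,n})_+\pointedpushout{\GLdp}Z$ (a pointed space, still carrying no residual group action since $\GLdp$ has been used up) and then takes ordinary labelled configurations in $\R^n$. Concretely: a point of $(\widetilde{C}_k(\R^n)\times(E_{d,n})^k)\times_{\Sigma_k\wr\GLdp}Z^k$ is a collection of $k$ distinct points of $\R^n$, each carrying a pair (a framed oriented $d$-plane in $E_{d,n}$, a label in $Z$) taken up to the diagonal $\GLdp$-action, which is exactly a configuration in $\R^n$ labelled by $(E_{d,n})_+\wedge_{\GLdp}Z$. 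I would spell out that the basepoint relations on both sides correspond, which is where well-pointedness of $Z$ is used so that the quotient is homotopically well-behaved and the colimit is a homotopy colimit.

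Finally, the map $i^*$ induced by $\{0\}\hookrightarrow\R^d$ on $\Tub^d(\R^n;Z)$ is, under the identification above, the map which forgets the frame and remembers only the basepoint and its $d$-plane-with-label, i.e.\ exactly the equivalence just constructed; so it suffices to check $i^*$ is compatible with the retraction, which is immediate since evaluation at $0$ is unchanged under the scaling homotopy. The main obstacle, as in the original Lemma \ref{lemma: tubular configurations} of \cite{Manthorpe-Tillmann}, is the point-set bookkeeping: verifying that the deformation retraction of $\Emb(\dcup{k}\R^d,\R^n)$ onto affine injections can be performed continuously and $\Sigma_k\wr\GLdp$-equivariantly in a way that survives the colimit over $k$ and the basepoint identifications, rather than any conceptual difficulty. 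Since this is literally the $n>d$ analogue of the argument proving Lemma \ref{lemma: tubular configurations} (which handles $n=d$), I would either cite that proof verbatim with $\R^n$ in place of $M$ and $\GLdp$ in place of $\GLd$, noting that the only change is that the frame bundle is now nontrivial over the Grassmannian factor, or reproduce the one-paragraph retraction argument and remark that the rest is identical.
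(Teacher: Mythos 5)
Your proof is correct and takes essentially the same route as the paper's: both identify $\Emb(\dcup{k}\R^d,\R^n)\simeq\widetilde{C}_k(\R^n)\times(E_{d,n})^k$ as $\Sigma_k\wr\GLdp$-spaces, pass this through the labelled-configuration construction to get a map $\Tub^d(\R^n;Z)\to C(\R^n;(E_{d,n})_+\pointedpushout{\GLdp}Z)$, and finish by invoking the arguments of \cite[Propositions 2.7 and 2.8]{Manthorpe-Tillmann}. The only difference is one of exposition: the paper simply ``recalls'' the embedding-space identification, whereas you spell out the scaling retraction onto affine injections, which is a useful amplification but not a different argument.
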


This result should be seen as an analogue of Lemma \ref{lemma: tubular configurations} in the setting of $d$-tubular configurations in $\R^n$.

\begin{proof}
    Recall that $\Emb(\dcup{k} \R^d, \R^n)\simeq \widetilde{C}_k(\R^n)\times (E_{d,n})^k$, where the map to $\widetilde{C}_k(\R^n)$ is induced by the inclusion of the origins. Then we get weak equivalences
        \[\Emb(\dcup{k} \R^d, M) \pushout{(\Sigma_k\wr \GLdp)} Z ^k\xrightarrow{\simeq}(\widetilde{C}_k(\R^n)\times (E_{d,n})^k)\pushout{(\Sigma_k\wr \GLdp)} Z^k.\]
    These respect the equivalence relations and therefore induce a map 
        \[i^*:\Dsum^d(\R^n;P)\xrightarrow{\simeq} C(M;(E_{d,n})_+\pointedpushout{\GLdp}Z).\]
    The proof then follows from the same arguments of \cite[Propositions 2.7 and 2.8]{Manthorpe-Tillmann}.
\end{proof}

We also need an analogue of Definition \ref{def: space of surrounded configurations} for the case of $d$-tubular configurations.

\begin{definition}\label{def:thin-discs}
    Let $M$ be a smooth compact manifold of dimension $n>d$.
    The \emph{space of $d$-surrounded configurations} in $M$ with summable labels in $P$, denoted $\Tsum^d(M;P)$, is the quotient
        \[\left(\coprod\limits_{k\geq 0}  \Emb(\dcup{k} \R^d, M) \pushout{\Sigma_k\wr \GLdp} \Csum(\dcup{k}\R^d;P) \right)\bigg/\sim\]
    where $(e:\dcup{m}\R^d\to  M,\xi)\sim(e':\dcup{n}\R^d\to  M,\xi')$ if there are injections $\underline{k}\hookrightarrow\underline{m}$ and $\underline{k}\hookrightarrow\underline{n}$ such that the induced inclusions $i_1:\dcup{k}\R^d\to \dcup{m}\R^d$ and $i_2:\dcup{k}\R^d\to \dcup{n}\R^d$ satisfy
        \begin{align*}
            \xi\subset Im\, i_1,\; \xi'\subset Im\, i_2 \text{ and } e\circ i_1=e'\circ i_2.
        \end{align*}
    We equip this space with a partial ordering by declaring $(e,\xi)<(e',\xi')$ if, for some representative of the classes, $e(\xi)=e'(\xi')$ and $Im\, e\supset Im\, e'$. We denote the semi-simplicial nerve of this poset by $\Tsum^d(M;P)_\bullet$. 
\end{definition}

\subsection{The decoupling theorem}\label{sec:decoupling map for colliding configurations}

In this section we prove the main theorem of this paper, which is a decoupling result for $\Csum(\Fgb;P)\parallelsum{\diffFgb}$. As in Section \ref{chap: configuration spaces}, we take as a model for $E\diffFgb$ the space $\Emb(\Fgb,\Rinf)$. For every $g$ and $b\geq0$ we have
    \begin{align*}
        \dcps:\Dsum(\Fgb;P)\times \Emb(\Fgb,\Rinf) &\longrightarrow \Emb(\Fgb,\Rinf) \times\Tsum^2(\Rinf;P)\\
        ((e:\dcup{k}\R^2\hookrightarrow \Fgb,\xi),f:\Fgb\hookrightarrow\Rinf) &\longmapsto (f,(f\circ e,\xi)).
    \end{align*}
This map is $\diffFgb$-equivariant with respect to the diagonal action on the domain and the action on $\Emb(\Fgb,\Rinf)$ on the target, and it preserves the poset structures. Hence it induces a map $\dcps$ fitting into the following diagram
    \[\begin{tikzcd}[column sep=1.2cm]
    	{{|\Dsum(\Fgb;P)_\bullet\pushout{\diffFgb} \Emb(\Fgb,\Rinf)|}} & {|B\diffFgb\times\Tsum^2(\Rinf;P)_\bullet|} \\
    	{{\Csum(\Fgb;P)\pushout{\diffFgb} \Emb(\Fgb,\Rinf)}} & {B\diffFgb \times |\Tsum^2(\Rinf;P)_\bullet|}
    	\arrow["\simeq"', from=1-1, to=2-1]
    	\arrow["\dcps", from=1-1, to=1-2]
    	\arrow["\simeq", from=1-2, to=2-2]
	\end{tikzcd}\]

\begin{theorem}\label{thm: decoupling summable configurations}
    The map 
    \[\dcps:|\Dsum(\Fgb;P)_\bullet\pushout{\diffFgb} \Emb(\Fgb,\Rinf)|\to |B\diffFgb\times\Tsum^2(\Rinf;P)_\bullet|\] 
    induces a homology isomorphism in degrees $\leq \frac{2}{3}g$.
\end{theorem}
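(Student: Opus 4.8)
The plan is to reduce this statement, via the skeletal filtration of the two semi-simplicial spaces, to the already-established Decoupling Theorem for Labelled Configurations (Theorem \ref{thm: decoupling labelled configurations}), applied level-wise. Both sides are geometric realisations of semi-simplicial spaces, so by the Segal spectral sequence recalled in Section \ref{subseq: spectral sequence} it suffices to show that $\dcps$ induces, in each simplicial degree $p$, a homology isomorphism in degrees $\leq \frac{2}{3}g$ on the spaces of $p$-simplices. Concretely, the map on $p$-simplices is
    \[
    \Big(\Dsum(\Fgb;P)_p \pushout{\diffFgb} \Emb(\Fgb,\Rinf)\Big) \longrightarrow B\diffFgb \times \Tsum^2(\Rinf;P)_p .
    \]

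\textbf{Step 1: unwinding the $p$-simplices.} A point of $\Dsum(\Fgb;P)_p$ is a chain $(e_0,\xi_0)<\dots<(e_p,\xi_p)$, which by the poset relation amounts to a single configuration with summable labels $\xi\in\Csum(\Fgb;P)$ supported in $\Ima e_p$, together with a nested tower $\Ima e_0\supset\dots\supset\Ima e_p$ of tubular embeddings of disjoint copies of $\R^2$ all containing $\xi$. The total space $\Csum(\dcup{k}\R^2;P)$ that labels the embeddings is itself a configuration-space-with-labels built, via the Fulton--MacPherson co-equaliser of Definition \ref{def:configs with summable labels}, from ordinary labelled configuration spaces of $\R^2$; crucially it carries a $\Sigma_k\wr\GLt$-action, so each $\Dsum(\Fgb;P)_p$ is an iterated bundle of the form $\widetilde{C}_k(\Fgb)\pushout{\Sigma_k} X$ with $X$ a $\Sigma_k\wr\GLt$-space (here $k$ ranges over the number of points of the \emph{innermost} infinitesimal configuration, and $X$ packages the tower of discs together with the summable labels). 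Likewise $\Tsum^2(\Rinf;P)_p$ has the analogous description over $\R^\infty$. So, up to the bookkeeping of filtering by the number $k$ of innermost points and taking the associated subquotients (which are cofibrations since $P$ is good), the $p$-th level map is precisely an instance of the parametrised decoupling map $\tau_k\times\varepsilon_k$ of Lemma \ref{lemma:parametrised-decoupling} with the auxiliary $\Sigma_k\wr\GLt$-space $X$ being the ``tower of discs plus labels'' space.

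\textbf{Step 2: apply parametrised decoupling level-wise.} For each fixed $p$ and each $k$, Lemma \ref{lemma:parametrised-decoupling} gives that
    \[
    \big(\widetilde{C}_k(\Fgb)\pushout{\Sigma_k} X\big)\parallelsum\diffFgb \longrightarrow B\diffFgb\times \big(\widetilde{C}_k(\Rinf)\times (E\GLt)^k\big)\pushout{\Sigma_k\wr\GLt} X
    \]
induces a homology isomorphism in degrees $\leq \frac{2}{3}g$, \emph{uniformly in $k$ and $X$}. One then runs the five-lemma argument on the filtration by $k$ exactly as in the proof of Theorem \ref{thm: decoupling labelled configurations}: comparing the Segal/excision spectral sequences of the two filtered spaces $\Dsum(\Fgb;P)_p\pushout{\diffFgb}\Emb(\Fgb,\Rinf)$ and $B\diffFgb\times\Tsum^2(\Rinf;P)_p$ and using that the map on each subquotient $\widetilde{C}_k(\Fgb)_+\pointedpushout{\Sigma_k}(\text{labels})^{\wedge k}$, Borel-constructed, is a homology isomorphism in the range by Lemma \ref{lemma:parametrised-decoupling}. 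This yields the level-wise statement. Feeding this into the Segal spectral sequence of Section \ref{subseq: spectral sequence} for the two semi-simplicial spaces — whose $E^1$-pages agree in the range $p'\leq\frac{2}{3}g$ — gives the homology isomorphism $\leq\frac{2}{3}g$ on realisations, which is the theorem.

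\textbf{Main obstacle.} The delicate point is Step 1: identifying the $p$-simplex space $\Dsum(\Fgb;P)_p$ with a space of the form $\widetilde{C}_k(\Fgb)\pushout{\Sigma_k}X$ to which Lemma \ref{lemma:parametrised-decoupling} literally applies, and checking that the decoupling map $\dcps$ restricted to level $p$ is genuinely the map $\tau_k\times\varepsilon_k$ under this identification. The subtlety is that the ``point'' of the configuration that decouples should be the innermost infinitesimal configuration, and the entire tower of nested tubular embeddings together with the Fulton--MacPherson data and the $P$-labels must be bundled into the single auxiliary $\Sigma_k\wr\GLt$-space $X$ — in particular one must verify that the tangential/frame data remembered by $\varepsilon_k$ is exactly the $(E\GLt)^k$-decoration appearing after decoupling, and that the co-equaliser relation defining $\Csum(\dcup{k}\R^2;P)$ is $\Sigma_k\wr\GLt$-equivariant in the required way (which follows from the equivariance in the definition of a framed partial monoid). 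Verifying that the filtration by $k$ interacts well with the basepoint-collapse relations (so that the subquotients are the asserted smash products and the inclusions are cofibrations — here the goodness hypothesis on $P$ is essential) is the remaining technical check, but it is routine once the identification is set up.
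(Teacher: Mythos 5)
Your overall strategy is the right one and is the same one the paper uses: apply the Segal spectral sequence to reduce to a level-wise statement, identify each level-$p$ space as a labelled configuration space, and invoke the Decoupling Theorem for Labelled Configurations. However, your Step~1 contains a genuine error in what quantity $k$ should count, and this error means the proposed identification $\Dsum(\Fgb;P)_p \cong \widetilde{C}_k(\Fgb)\pushout{\Sigma_k}X$ does not hold as stated.

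You write that ``$k$ ranges over the number of points of the \emph{innermost} infinitesimal configuration'' and that ``the `point' of the configuration that decouples should be the innermost infinitesimal configuration.'' This is not the right quantity to decouple on. The particles of the Fulton--MacPherson configuration can collide, so their positions do not live in $\widetilde{C}_k(\Fgb)$, and the number of particles is not a well-behaved stratification parameter for $\Dsum(\Fgb;P)_p$. What actually decouples is the set of centres of the \emph{outermost} embedded discs, i.e.\ the components of $e_0$. The paper's proof makes this precise by exhibiting a $\diffFgb$-equivariant homeomorphism
\[
\Dsum(\Fgb;P)_p \;\cong\; \Tub(\Fgb;\mmm{Z}_p),
\]
where $\mmm{Z}_p$ is the subspace of $\Dsum(\R^2;P)_p$ consisting of chains whose outermost embedding is the identity on $\R^2$: a single disc carries, as its ``label,'' the entire inner tower of discs together with the configuration-with-summable-labels inside. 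Lemma~\ref{lemma: tubular configurations} then replaces $\Tub(\Fgb;\mmm{Z}_p)$ by $C(\Fgb;\mmm{Z}_p)$ up to $\diffFgb$-equivariant weak equivalence, and Theorem~\ref{thm: decoupling labelled configurations} (not the raw Lemma~\ref{lemma:parametrised-decoupling}) is applied level-wise with $Z=\mmm{Z}_p$. On the target side, the same argument identifies $\Tsum^2(\Rinf;P)_p$ with $C(\Rinf;(E\SO(2))_+\wedge_{\SO(2)}\mmm{Z}_p)$ via Lemma~\ref{lemma: thin tubular configurations}, and the decoupling map is seen to agree with $\tau\times\varepsilon$ under these identifications. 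Your proposal could be repaired by replacing the particle count with the disc count and the auxiliary space $X$ with the cube $\mmm{Z}_p^{\wedge k}$ (filtering $C(\Fgb;\mmm{Z}_p)$ by $k$ exactly as in the proof of Theorem~\ref{thm: decoupling labelled configurations}); but as written, the decomposition by particles does not exist, and the subsequent five-lemma and spectral-sequence comparison would have nothing to run on.

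A secondary, stylistic point: the paper avoids redoing the filtration-by-$k$ and five-lemma argument inside this proof by simply citing Theorem~\ref{thm: decoupling labelled configurations}; inlining that argument, as you propose, is correct in principle but duplicates work already done.
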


Theorem \ref{thm: decoupling summable configurations} and Corollary \ref{cor:disc model for borel construction} imply Theorem \ref{main thm: decoupling summable labels}. 

The proof  of the above, will consist on showing that $\dcps$ is a level-wise homology isomorphism of semi-simplicial spaces in degrees $\leq \frac{2}{3}g$ and then show that this implies that the same holds on the geometric realisations, as done in \cite[Section 4]{MR3665002}. To do this, we will use the spectral sequence recalled in Section \ref{subseq: spectral sequence}, Lemma \ref{lemma: tubular configurations}, and Theorem \ref{thm: decoupling labelled configurations}, the decoupling result for the space of non-colliding configurations with labels.

Throughout the proof, it will be helpful to keep in mind Figure \ref{fig:visualising the homeomrphism}.

    \begin{figure}[h!]
        \centering
        \includegraphics[width=0.75\textwidth]{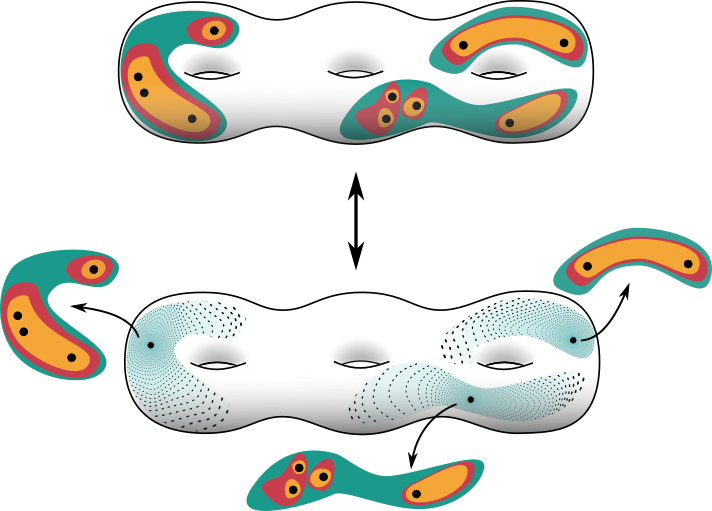}
        \caption{Correspondence of \eqref{eq:homeomorphism Dsum} between an element of $\Dsum(F_3;P)_2$ (top) and $\Tub(F_3,\mmm{Z}_2)$ (bottom). The dotted regions represent the embedded $\R^2$'s and the arrows indicate their labels.}
        \label{fig:visualising the homeomrphism}
    \end{figure}

\begin{proof}[Proof of Theorem \ref{thm: decoupling summable configurations}]
    We start by showing that 
        \[\Dsum(\Fgb;P)_\bullet\pushout{\diffFgb}\Emb(\Fgb,\Rinf) \to B\diffFgb\times\Tsum^2(\Rinf;P)_\bullet\]
    induces level-wise homology isomorphisms in degrees $\leq \frac{2}{3}g$.
    
    Let $\mmm{Z}_p$ be the subspace of $\Dsum(\R^2;P)_p$ consisting of those tuples $\overline{e}=((e_0,\xi_0),\dots,(e_p,\xi_p))$ with $e_0=\id$. 
    This space is pointed by the unique class on the empty configuration. 
    We show there is a $\diffFgb$-equivariant homeomorphism 
        \begin{align}\label{eq:homeomorphism Dsum}
            \Dsum(\Fgb;P)_p\cong \Tub(\Fgb;\mmm{Z}_p)
        \end{align}
    where $\Tub(\Fgb;\mmm{Z}_p)$ is the space of tubular configurations with labels in $\mmm{Z}_p$ (see Definition \ref{def: tubular configurations}). Let $[(e_0,\xi_0)<\dots<(e_p,\xi_p)]$ denote the an element in $\Dsum(\Fgb;P)_p$. Then, by definition $e_0(\xi_0)=\dots=e_p(\xi_p)$ and $Im\, e_0\supset\dots \supset Im\, e_p$. Denote by $i_j:\R^2\hookrightarrow \dcup{k}\R^2$ the map induced by the inclusion $\{j\}\hookrightarrow\{1,\dots,k\}$, for $1\leq j\leq k$.
    The map $\Dsum(\Fgb;P)_p\to \Dsum(\Fgb;\mmm{Z}_p)$ takes a sequence $((e_0,\xi_0),\dots,(e_p,\xi_p))$, to the class represented by the embedding $e_0$ and
    and the label associated to the $j$th component $\R^2\subset \dcup{k}\R^2$ given by 
        \[((id,\xi_0),((e_0\circ i_j)^{-1}\circ e_1,\xi_1)\dots,((e_0\circ i_j)^{-1}\circ e_p,\xi_p))\in \mmm{Z}_p.\]
    For intuition behind this homeomorphism, see Figure \ref{fig:visualising the homeomrphism}. It is simple to explicitly construct an inverse for this map and check this is a $\diffFgb$-equivariant 
    homeomorphism.
    
    By Lemma \ref{lemma: tubular configurations}, the inclusion of the origins defines $\diffFgb$-equivariant map $\Tub(\Fgb;\mmm{Z}_p)\to C(\Fgb;\mmm{Z}_p)$ which is a weak-equivalence of $\diffFgb$-spaces. Together with the homeomorphism \eqref{eq:homeomorphism Dsum} this implies that
        \begin{align}\label{eq:dsum and configurations}
            \Dsum(\Fgb;P)_p\pushout{\diffFgb} \Emb(\Fgb,\Rinf)\xrightarrow{\simeq} C(\Fgb;\mmm{Z}_p)\pushout{\diffFgb} \Emb(\Fgb,\Rinf).
        \end{align}
    
    Similarly, we now show that 
        \begin{align}\label{eq:homotopy equivalence for thin discs p}
            \Tsum^2(\Rinf;P)_p\xrightarrow{\simeq} C(\Rinf;(E\SO(2))_+\pointedpushout{\SO(2)}\mmm{Z}_p).
        \end{align}
    By the same argument as above, we get a homeomorphism
        \[\Tsum^2(\Rinf;P)_p\cong \Tub^2(\Rinf;\mmm{Z}_p)\]
    where $\Tub^2(\Rinf;\mmm{Z}_p)$ is the space of $d$-tubular configurations with labels in $\mmm{Z}_p$ as in Definition \ref{def: thin tubular configurations}. By Lemma \ref{lemma: thin tubular configurations}, the inclusion of the origins induces a weak homotopy equivalence $\Tub^2(\Rinf;\mmm{Z}_p)\simeq C(\Rinf;(E\SO(2))_+\wedge_{\SO(2)}\mmm{Z}_p)$ 
    which implies the homotopy equivalence \eqref{eq:homotopy equivalence for thin discs p}.

    Then we have a commutative diagram 
        \[\begin{tikzcd}[row sep=1cm]
        {\Dsum(\Fgb;P)_p\pushout{\diffFgb} \Emb(\Fgb,\Rinf)} \ar[r, "\simeq"] \ar[d, "(\dcps)_p"'] & {C(\Fgb;\mmm{Z}_p)\pushout{\diffFgb} \Emb(\Fgb,\Rinf)} \ar[d, "\tau\times\varepsilon"]\\
        {B\diffFgb\times\Tsum^2(\Rinf;P)_p} \ar[r, "\simeq"] & {B\diffFgb\times C(\Rinf;E\SO(2)_+\pointedpushout{\SO(2)}\mmm{Z}_p)}
    \end{tikzcd}\]
    where the top and bottom maps are weak equivalences by \eqref{eq:dsum and configurations} and \eqref{eq:homotopy equivalence for thin discs p}, respectively.
    The right-hand map induces homology isomorphisms in degrees $\leq\frac{2}{3}g$ by Theorem \ref{thm: decoupling labelled configurations} with $Z=\mmm{Z}_p$. Therefore so does the map $(\dcps)_p$.
    
    This implies that the map between the spectral sequences associated to the semi-simplicial spaces $X_\bullet=\Dsum(\Fgb;P)_\bullet\times_{\diffFgb}\Emb(\Fgb,\Rinf)$, and $Y_\bullet=B\diffFgb\times\Tsum^2(\Rinf;P)_\bullet$
        \[\begin{tikzcd}
            E^1_{p,q}=H_q(X_p) \ar[r, Rightarrow] \ar[d] & H_{p+q}(|X_\bullet|) \ar[d, "\dcps"] \\
            E'^1_{p,q}=H_q(Y_p) \ar[r, Rightarrow]  & H_{p+q}(|Y_\bullet|).
        \end{tikzcd}\]
    induces an isomorphism on the $E^1$-pages for all $q\leq\frac{2}{3}g$, and therefore the right-hand map is also an isomorphism in such degrees.    
\end{proof}

As in the case for labelled configuration spaces, the Decoupling Theorem for Summable Labels, allows us to deduce homological stability results.

\begin{corollary}\label{cor: homological stability for moduli of summable configurations}
    For $b\geq 1$, the map induced by gluing $F_{1,1}$ along the boundary
        \[\Csum(\Fgb;Z)\parallelsum\diffFgb\to \Csum(F_{g+1,b};Z)\parallelsum\diff(F_{g+1,b})\]
    induces a homology isomorphism in degrees $\leq \frac{2}{3}g$.
\end{corollary}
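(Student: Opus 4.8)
The plan is to run the argument that proved Corollary~\ref{cor: homological stability for moduli of labelled configurations}, now feeding in the Decoupling Theorem for Configurations with Summable Labels (Theorem~\ref{main thm: decoupling summable labels}) in place of its labelled counterpart; throughout, $P$ denotes the fixed framed partial $2$-monoid.

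First I would set up a model of $E\diffFgb$ on which the stabilisation map is visible, exactly as in Section~\ref{subsec: homological stability}: take $\Emb^\partial(\Fgb,(-\infty,0]\times\Rinf)$, the weakly contractible subspace of embeddings that are standard on a collar of a fixed boundary circle, and note that extension by a once-and-for-all chosen embedding of $F_{1,2}$ gives a $\diffFgb$-equivariant inclusion into $\Emb^\partial(F_{g+1,b},(-\infty,0]\times\Rinf)$ compatible with $s\colon\diffFgb\hookrightarrow\diff(F_{g+1,b})$. Since $\Fgb\hookrightarrow F_{g+1,b}$ is a codimension-zero embedding, the functoriality map \eqref{eq: map induced by cod-0 embedding} yields an $s$-equivariant map $\Csum(\Fgb;P)\to\Csum(F_{g+1,b};P)$, hence the stabilisation map $s_*$ on Borel constructions. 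I would then check that $s_*$ lifts through the disc models of Section~\ref{sec:poset-surrounded-config} — it is induced on the posets $\Dsum(-;P)$ by post-composing the embedded discs with $\Fgb\hookrightarrow F_{g+1,b}$ and pushing forward the labelled configurations — so that the weak equivalence of Corollary~\ref{cor:disc model for borel construction} becomes natural with respect to $s_*$.

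Next I would observe that the decoupling map $\dcps$ of Theorem~\ref{thm: decoupling summable configurations} is compatible with stabilisation: its target $B\diffFgb\times|\Tsum^2(\Rinf;P)_\bullet|$ has second factor depending only on $P$, and on the first factor $s_*$ induces the usual map $B\diffFgb\to B\diff(F_{g+1,b})$. This produces a commutative square
\[\begin{tikzcd}[column sep=1.7cm]
{\Csum(\Fgb;P)\parallelsum\diffFgb} \ar[r,"s_*"] \ar[d] & {\Csum(F_{g+1,b};P)\parallelsum\diff(F_{g+1,b})} \ar[d]\\
{B\diffFgb\times|\Tsum^2(\Rinf;P)_\bullet|} \ar[r,"s\times\id"] & {B\diff(F_{g+1,b})\times|\Tsum^2(\Rinf;P)_\bullet|}
\end{tikzcd}\]
whose vertical maps are the (composite) decoupling maps. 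By Theorem~\ref{main thm: decoupling summable labels} the left vertical map is a homology isomorphism in degrees $\leq\frac{2}{3}g$ and the right one in degrees $\leq\frac{2}{3}(g+1)$, hence in particular in degrees $\leq\frac{2}{3}g$. By Harer's stability theorem \cite{MR786348,RWMR3438379} the factor map $B\diffFgb\to B\diff(F_{g+1,b})$ is a homology isomorphism in degrees $\leq\frac{2}{3}g$, so by the Künneth theorem the bottom map is as well; a diagram chase then forces $s_*$ to be a homology isomorphism in degrees $\leq\frac{2}{3}g$.

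The main obstacle lies entirely in the first two paragraphs: one must verify that all the identifications in play — the disc-model equivalence of Corollary~\ref{cor:disc model for borel construction} and the decoupling map $\dcps$ — can be realised naturally with respect to a single fixed stabilisation-compatible model of $E\diffFgb$, exactly the bookkeeping the author flags in the labelled case. Once that is in place the conclusion is a purely formal diagram chase, requiring no further spectral sequence beyond what already went into the proof of Theorem~\ref{main thm: decoupling summable labels}.
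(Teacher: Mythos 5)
Your proposal is correct and follows essentially the same route as the paper, which dispatches this corollary by saying the argument is the same as for Corollary~\ref{cor: homological stability for moduli of labelled configurations}: fix the stabilisation-compatible model $\Emb^\partial(\Fgb,(-\infty,0]\times\Rinf)$ for $E\diffFgb$, set up the commutative square with decoupling maps as vertical arrows and stabilisation maps as horizontal arrows, invoke the Decoupling Theorem for Summable Labels for the verticals and Harer stability plus K\"{u}nneth for the bottom, and conclude by a diagram chase. Your additional paragraph checking that $s_*$ is induced on the disc models $\Dsum(-;P)$ compatibly with Corollary~\ref{cor:disc model for borel construction} correctly fills in the bookkeeping the paper leaves implicit.
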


The proof follows from the same arguments as in the proof of Corollary \ref{cor: homological stability for moduli of labelled configurations}.

\subsection{Monoids of configurations on surfaces with partially summable labels}

In this section, we show that Theorem \ref{thm: decoupling summable configurations} implies a splitting for the group completion of the monoid of configuration on surfaces with partially summable labels, analogous to Corollary \ref{thm:group-completion}.

As in Section \ref{subsec: monoids of moduli of configuration spaces}, gluing two surfaces $\Fgo$ and $F_{h,1}$ along part of their boundary defines an associative multiplication
    \[B\diff(\Fgo)\times B\diff(F_{h,1})\to B\diff(F_{g+h,1}).\]

For $(P,0)$ a framed partial $2$-monoid with unit, the operation on diffeomorphism groups and spaces $E\diffFgo$ described above, together with the fixed identifications $F_{g+h,1}=\Fgo\#_\partial F_{h,1}$, induce an associative multiplication also on the Borel constructions
    \[\mu:\Csum(\Fgo;P)\parallelsum\diffFgo\times \Csum(F_{h,1};P)\parallelsum\diff(F_{h,1})\to \Csum(F_{g+h,1};P)\parallelsum\diff(F_{g+h,1}).\]
Analogous to the construction of Chapter \ref{chap: configuration spaces}, we denote the associated Borel construction by 
    \[M\Csum(P)_g=\Csum(\Fgo;P)\parallelsum \diff(\Fgo).\]
This multiplication makes $M\Csum(P)=\dcup{g\geq 0}M\Csum(P)_g$ into a topological monoid, which we refer to as the \emph{monoid of configurations with summable labels} in $P$. 

On the other hand, the poset $\Tsum^2(\Rinf;P)$ can be made into a partially ordered topological monoid, using the same strategy Segal used to define a topological monoid equivalent to $C(\Rinf,X)$, as we recalled in Section \ref{subsec: monoids of moduli of configuration spaces}.

\begin{corollary}\label{thm: group completion summable}
    For any path-connected framed partial $2$-monoid with unit $P$, there is a homotopy equivalence
        \[\Omega B(M\Csum(P))\simeq \Omega B\left(\dcup{g}B\diffFgo\right)\times \Omega B\,|\Tsum^2(\Rinf;P)_\bullet|.\]
\end{corollary}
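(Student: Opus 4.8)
The strategy mirrors the proof of Theorem \ref{thm:group-completion}: reduce the statement to a homology equivalence on the stabilised limit spaces, and then feed in the decoupling result of Theorem \ref{thm: decoupling summable configurations} together with Corollary \ref{cor: homological stability for moduli of summable configurations}. First I would verify that the decoupling map is monoidal in the appropriate sense. The multiplication $\mu$ on $M\Csum(P)$ comes from boundary connected sum of surfaces together with the associative model $\Emb^{\delta_g}(\Fgo,[0,g]\times\Rinf)$ for $E\diffFgo$ introduced in Section \ref{subsec: monoids of moduli of configuration spaces}; the same model makes the disc resolution $|\Dsum(\Fgo;P)_\bullet \pushout{\diffFgo}\Emb^{\delta_g}(\Fgo,[0,g]\times\Rinf)|$ into a topological monoid via Lemma \ref{lemma: monoid on geom realisation}, since the fibrewise poset structure on $\Dsum$ is order-preserving under the gluing maps. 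On the target side, $\dcup{g}B\diffFgo$ is the surface monoid, and $|\Tsum^2(\Rinf;P)_\bullet|$ is a topological monoid by juxtaposition in the first $\R$-coordinate, exactly as in Segal's construction recalled in Section \ref{subsec: monoids of moduli of configuration spaces} (using the partially ordered topological monoid structure on $\Tsum^2(\Rinf;P)$ and Lemma \ref{lemma: monoid on geom realisation} again). One checks directly from the formula for $\dcps$ that it carries $\mu$ to the product of these two multiplications, possibly after passing to the homotopy-equivalent ``Segal-type'' model $C'(\Rinf,-)$ as in Lemma \ref{lemma:group completion argument}; this is where some care is needed with basepoints and with the first-coordinate bookkeeping, but it is a routine compatibility check.

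Next, since the group completions $\Omega B(-)$ of all three monoids are loop spaces, hence simple, it suffices by the Whitehead theorem to produce a homology equivalence on group completions. All three monoids are homotopy commutative (the surface monoid is, by the standard argument; $M\Csum(P)$ inherits homotopy commutativity from the freedom to slide the two summands past one another in $\Rinf$; and $|\Tsum^2(\Rinf;P)_\bullet|$ is homotopy commutative by juxtaposition in an infinite-dimensional direction), so the group completion theorem \cite{McDuff-Segal} applies and reduces the problem to showing that the induced map on the telescope of left-multiplication maps is a homology isomorphism. That telescope is computed by the stabilised spaces: for $M\Csum(P)$ it is $\colim_g \Csum(\Fgo;P)\parallelsum\diffFgo$, for the surface monoid it is $B\diffFinf$, and for the juxtaposition monoid the telescope is homotopy equivalent to $|\Tsum^2(\Rinf;P)_\bullet|$ itself (stabilisation is a homotopy equivalence there, as juxtaposing with the empty configuration is homotopic to the identity). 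So it remains to show
\[
\colim_g\big(\Csum(\Fgo;P)\parallelsum\diffFgo\big)\longrightarrow B\diffFinf\times |\Tsum^2(\Rinf;P)_\bullet|
\]
is a homology isomorphism in all degrees.

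This last point follows by combining Theorem \ref{thm: decoupling summable configurations} (rephrased via Corollary \ref{cor:disc model for borel construction} and Theorem \ref{main thm: decoupling summable labels} as a homology isomorphism $\Csum(\Fgo;P)\parallelsum\diffFgo\to B\diffFgo\times|\Tsum^2(\Rinf;P)_\bullet|$ in degrees $\leq \tfrac23 g$) with the genus stabilisation Corollary \ref{cor: homological stability for moduli of summable configurations}, which guarantees the left-hand side stabilises and that the ranges assemble: passing to the colimit over $g$, the ``$\leq\tfrac23 g$'' restriction disappears and one gets a homology isomorphism in every degree. I expect the main obstacle to be the first paragraph — namely, setting up compatible strictly associative models so that $\dcps$ is literally a map of topological monoids, rather than merely $A_\infty$ or homotopy-monoidal. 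In particular one must choose the embeddings $\delta_g$, the disc resolutions, and the target juxtaposition model all compatibly with genus stabilisation, and check that the poset-level map respects the monoid structure of Lemma \ref{lemma: monoid on geom realisation} on geometric realisations; once that bookkeeping is in place, the homological input is exactly Theorem \ref{thm: decoupling summable configurations} and the group completion theorem, and the rest is formal.
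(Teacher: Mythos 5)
Your proposal follows essentially the same route as the paper's proof: you use the disc resolution $\dcup{g}|\Dsum(\Fgo;P)_\bullet\parallelsum\diffFgo|$ as the intermediate monoid in a zig-zag (with the augmentation of Corollary \ref{cor:disc model for borel construction} on one side and the decoupling map $\dcps$ on the other), make everything strictly associative via Lemma \ref{lemma: monoid on geom realisation} and the Segal-type juxtaposition model, apply the group completion theorem, and reduce to the stabilised homology isomorphism coming from Theorem \ref{thm: decoupling summable configurations} and Corollary \ref{cor: homological stability for moduli of summable configurations}. This matches the paper's argument step for step.
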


The proof of the above result consists of constructing a zig-zag of monoids
    \begin{equation}\label{eq: zig-zag of monoids}
        \begin{tikzcd}[row sep=0.65cm, column sep=1.25cm]{{\coprod\limits_{g\geq0}|\Tsum(\Fgo;P)_\bullet\parallelsum\diffFgo|}} & {{\coprod\limits_{g\geq0}B\diffFgo\times|\Tsum^2(\Rinf;P)_\bullet|}} \\{{\phantom{\coprod\limits_{g\geq0}}M\Csum(P)\phantom{\coprod\limits_{g\geq0}}}}
        \arrow["\simeq"', from=1-1, to=2-1]
        \arrow["\dcps", from=1-1, to=1-2]
        \end{tikzcd}
    \end{equation}
The vertical arrow is induced by Corollary \ref{cor:disc model for borel construction} while the horizontal arrow is induced by the decoupling map. We describe the monoidal structures on the top spaces using a general construction for posets recalled in Section \ref{subsec: semi-simplicial nerve of a poset}.

\begin{proof}[Proof of Corollary \ref{thm: group completion summable}]
    The space ${\coprod\limits_{g\geq0}|\Tsum(\Fgo;P)_\bullet\parallelsum\diffFgo|}$ is homotopy equivalent to the geometric realisation of the semi-simplicial nerve of the poset
        \[{\coprod\limits_{g\geq0}\Tsum(\Fgo;P)\parallelsum\diffFgo}\]
    with partial order defined component-wise. 
    As before, gluing surfaces along part of their boundary makes $\dcup{g\geq0}\Tsum(\Fgo;P)\parallelsum\diffFgo$ into a partially ordered topological monoid. Moreover, one can verify that the augmentations $\Tsum(\Fgo;P)_\bullet\to \Csum(\Fgo;P)$ induce a map of monoids. Together with Lemma \ref{lemma: monoid on geom realisation}, this gives a map of monoids
        \begin{align}\label{eq: monoidal map 1}
        {\coprod\limits_{g\geq0}|\Tsum(\Fgo;P)_\bullet\parallelsum\diffFgo|}\to M\Csum(P).
        \end{align}
    
    On the other hand, the decoupling map gives a map of topological monoids
        \begin{align}\label{eq: monoidal map 2}
            {\coprod\limits_{g\geq0}|\Tsum(\Fgo;P)_\bullet\parallelsum\diffFgo|}\to{\coprod\limits_{g\geq0}B\diffFgo\times|\Tsum^2(\Rinf;P)_\bullet|} 
        \end{align}
    just as in Lemma \ref{lemma:group completion argument}.
    
    All that remains is to verify that the maps \eqref{eq: monoidal map 1} and \eqref{eq: monoidal map 2} induce homotopy equivalences on group completions. As the group completions are loop spaces, they are simple and, by Whitehead's theorem, it suffices to show the maps induce homology equivalences on the group completions. All monoids are homotopy commutative, hence the group completion theorem \cite{McDuff-Segal} can be applied. It implies that it is enough to show that the induced maps on limit spaces
        \begin{align}
        {|\Tsum(\Finf;P)_\bullet\parallelsum\diffFinf|}&\to \Csum(\Finf;P)\parallelsum\diffFinf\\
        {|\Tsum(\Finf;P)_\bullet\parallelsum\diffFinf|}&\to B\diffFinf\times|\Tsum^2(\Rinf;P)_\bullet|
        \end{align}
    are homology equivalences. The first map is a weak equivalence by Corollary \ref{cor:disc model for borel construction}, while the second map is a homology equivalence by Theorem \ref{thm: decoupling summable configurations} and Corollary \ref{cor: homological stability for moduli of summable configurations}.
\end{proof}


\bibliographystyle{amsalpha}
\bibliography{bibliography}

\providecommand{\bysame}{\leavevmode\hbox to3em{\hrulefill}\thinspace}
\providecommand{\MR}{\relax\ifhmode\unskip\space\fi MR }
\providecommand{\MRhref}[2]{%
  \href{http://www.ams.org/mathscinet-getitem?mr=#1}{#2}
}
\providecommand{\href}[2]{#2}
\begin{thebibliography}{GTMW09}

\bibitem[AF15]{MR3431668}
David Ayala and John Francis, \emph{Factorization homology of topological
  manifolds}, J. Topol. \textbf{8} (2015), no.~4, 1045--1084. \MR{3431668}

\bibitem[AFT17]{AyalaFrancisTanaka17}
David Ayala, John Francis, and Hiro~Lee Tanaka, \emph{Factorization homology of
  stratified spaces}, Selecta Math. (N.S.) \textbf{23} (2017), no.~1, 293--362.
  \MR{3595895}

\bibitem[BF81]{MR613004}
E.~Binz and H.~R. Fischer, \emph{The manifold of embeddings of a closed
  manifold}, Differential geometric methods in mathematical physics ({P}roc.
  {I}nternat. {C}onf., {T}ech. {U}niv. {C}lausthal, {C}lausthal-{Z}ellerfeld,
  1978), Lecture Notes in Phys., vol. 139, Springer, Berlin-New York, 1981,
  With an appendix by P. Michor, pp.~310--329.

\bibitem[B{\"o}d87]{bodigheimer1987stable}
C-F B{\"o}digheimer, \emph{Stable splittings of mapping spaces}, Algebraic
  topology, Springer, 1987, pp.~174--187.

\bibitem[Bol12]{boldsen2012improved}
S{\o}ren~K Boldsen, \emph{Improved homological stability for the mapping class
  group with integral or twisted coefficients}, Mathematische Zeitschrift
  \textbf{270} (2012), no.~1-2, 297--329.

\bibitem[Bon22]{bonatto_2022}
Luciana~Basualdo Bonatto, \emph{Decoupling decorations on moduli spaces of
  manifolds}, Mathematical Proceedings of the Cambridge Philosophical Society
  (2022), 1–36.

\bibitem[BT01]{MR1851247}
Carl-Friedrich B\"odigheimer and Ulrike Tillmann, \emph{Stripping and splitting
  decorated mapping class groups}, Cohomological methods in homotopy theory
  ({B}ellaterra, 1998), Progr. Math., vol. 196, Birkh\"auser, Basel, 2001,
  pp.~47--57.

\bibitem[ERW19]{Ebert_2019}
Johannes Ebert and Oscar Randal-Williams, \emph{Semisimplicial spaces},
  Algebraic \& Geometric Topology \textbf{19} (2019), no.~4, 2099–2150.

\bibitem[GRW10]{MR2653727}
S\o~ren Galatius and Oscar Randal-Williams, \emph{Monoids of moduli spaces of
  manifolds}, Geom. Topol. \textbf{14} (2010), no.~3, 1243--1302. \MR{2653727}

\bibitem[GRW14]{MR3207759}
S{\o}ren Galatius and Oscar Randal-Williams, \emph{Stable moduli spaces of
  high-dimensional manifolds}, Acta Math. \textbf{212} (2014), no.~2, 257--377.
  \MR{3207759}

\bibitem[GRW17]{MR3665002}
\bysame, \emph{Homological stability for moduli spaces of high dimensional
  manifolds. {II}}, Ann. of Math. (2) \textbf{186} (2017), no.~1, 127--204.

\bibitem[GRW18]{MR3718454}
\bysame, \emph{Homological stability for moduli spaces of high dimensional
  manifolds. {I}}, J. Amer. Math. Soc. \textbf{31} (2018), no.~1, 215--264.

\bibitem[GTMW09]{MR2506750}
S\o~ren Galatius, Ulrike Tillmann, Ib~Madsen, and Michael Weiss, \emph{The
  homotopy type of the cobordism category}, Acta Math. \textbf{202} (2009),
  no.~2, 195--239.

\bibitem[Gue95]{Guest95}
Martin~A. Guest, \emph{The topology of the space of rational curves on a toric
  variety}, Acta Math. \textbf{174} (1995), no.~1, 119--145. \MR{1310847}

\bibitem[Han09]{MR2439464}
Elizabeth Hanbury, \emph{Homological stability of non-orientable mapping class
  groups with marked points}, Proc. Amer. Math. Soc. \textbf{137} (2009),
  no.~1, 385--392.

\bibitem[Har85]{MR786348}
John~L. Harer, \emph{Stability of the homology of the mapping class groups of
  orientable surfaces}, Ann. of Math. (2) \textbf{121} (1985), no.~2, 215--249.

\bibitem[Har90]{MR1054572}
\bysame, \emph{Stability of the homology of the moduli spaces of {R}iemann
  surfaces with spin structure}, Math. Ann. \textbf{287} (1990), no.~2,
  323--334.

\bibitem[Iva87]{ivaMR896878}
N.~V. Ivanov, \emph{Complexes of curves and {T}eichm\"{u}ller modular groups},
  Uspekhi Mat. Nauk \textbf{42} (1987), no.~3(255), 49--91, 255.

\bibitem[Iva89]{ivaMR1015128}
\bysame, \emph{Stabilization of the homology of {T}eichm{\"{u}}ller modular
  groups}, Algebra i Analiz \textbf{1} (1989), no.~3, 110--126.

\bibitem[Iva93]{ivaMR1234264}
Nikolai~V. Ivanov, \emph{{On the homology stability for {T}eichm{\"u}ller
  modular groups: closed surfaces and twisted coefficients}}, Mapping class
  groups and moduli spaces of {R}iemann surfaces ({G}\"{o}ttingen,
  1991/{S}eattle, {WA}, 1991), Contemp. Math., vol. 150, Amer. Math. Soc.,
  Providence, RI, 1993, pp.~149--194.

\bibitem[Kal01]{Kallel01}
Sadok Kallel, \emph{Spaces of particles on manifolds and generalized
  {P}oincar\'{e} dualities}, Q. J. Math. \textbf{52} (2001), no.~1, 45--70.
  \MR{1820902}

\bibitem[KM16]{MR3464033}
Alexander Kupers and Jeremy Miller, \emph{Homological stability for topological
  chiral homology of completions}, Adv. Math. \textbf{292} (2016), 755--827.
  \MR{3464033}

\bibitem[Lur09]{MR2555928}
Jacob Lurie, \emph{On the classification of topological field theories},
  Current developments in mathematics, 2008, Int. Press, Somerville, MA, 2009,
  pp.~129--280. \MR{2555928}

\bibitem[May72]{May1972}
J.~P. May, \emph{The geometry of iterated loop spaces}, Lecture Notes in
  Mathematics, Vol. 271, Springer-Verlag, Berlin-New York, 1972. \MR{0420610}

\bibitem[McD75]{mcduff1975configuration}
Dusa McDuff, \emph{Configuration spaces of positive and negative particles},
  Topology \textbf{14} (1975), no.~1, 91--107.

\bibitem[Mil86]{Miller86}
Edward~Y. Miller, \emph{The homology of the mapping class group}, J.
  Differential Geom. \textbf{24} (1986), no.~1, 1--14. \MR{857372}

\bibitem[MS76]{McDuff-Segal}
D.~McDuff and G.~Segal, \emph{Homology fibrations and the ``group-completion''
  theorem}, Invent. Math. \textbf{31} (1975/76), no.~3, 279--284. \MR{402733}

\bibitem[MT14]{Manthorpe-Tillmann}
Richard Manthorpe and Ulrike Tillmann, \emph{Tubular configurations:
  equivariant scanning and splitting}, J. Lond. Math. Soc. (2) \textbf{90}
  (2014), no.~3, 940--962. \MR{3291808}

\bibitem[MW07]{madsen2007stable}
Ib~Madsen and Michael Weiss, \emph{The stable moduli space of riemann surfaces:
  Mumford's conjecture}, Annals of mathematics (2007), 843--941.

\bibitem[Pal60]{MR117741}
Richard~S. Palais, \emph{Extending diffeomorphisms}, Proc. Amer. Math. Soc.
  \textbf{11} (1960), 274--277. \MR{117741}

\bibitem[RW14]{MR3180616}
Oscar Randal-Williams, \emph{Homology of the moduli spaces and mapping class
  groups of framed, {$r$}-{S}pin and {P}in surfaces}, J. Topol. \textbf{7}
  (2014), no.~1, 155--186.

\bibitem[RW16]{RWMR3438379}
\bysame, \emph{Resolutions of moduli spaces and homological stability}, J. Eur.
  Math. Soc. (JEMS) \textbf{18} (2016), no.~1, 1--81.

\bibitem[Sal01]{Salvatore1999Configuration}
Paolo Salvatore, \emph{Configuration spaces with summable labels},
  Cohomological methods in homotopy theory ({B}ellaterra, 1998), Progr. Math.,
  vol. 196, Birkh\"{a}user, Basel, 2001, pp.~375--395. \MR{1851264}

\bibitem[Seg68]{segal1968classifying}
Graeme Segal, \emph{Classifying spaces and spectral sequences}, Publications
  Math{\'e}matiques de l'IH{\'E}S \textbf{34} (1968), 105--112.

\bibitem[Seg73]{Segal}
\bysame, \emph{Configuration-spaces and iterated loop-spaces}, Invent. Math.
  \textbf{21} (1973), 213--221. \MR{331377}

\bibitem[Seg79]{Segal79}
\bysame, \emph{The topology of spaces of rational functions}, Acta Math.
  \textbf{143} (1979), no.~1-2, 39--72. \MR{533892}

\bibitem[Sin04]{Sinha04}
Dev~P. Sinha, \emph{Manifold-theoretic compactifications of configuration
  spaces}, Selecta Math. (N.S.) \textbf{10} (2004), no.~3, 391--428.
  \MR{2099074}

\bibitem[SW03]{SalvatoreWahl}
Paolo Salvatore and Nathalie Wahl, \emph{Framed discs operads and
  {B}atalin-{V}ilkovisky algebras}, Q. J. Math. \textbf{54} (2003), no.~2,
  213--231. \MR{1989873}

\bibitem[Til00]{Tillmann_surface_operad}
Ulrike Tillmann, \emph{Higher genus surface operad detects infinite loop
  spaces}, Math. Ann. \textbf{317} (2000), no.~3, 613--628. \MR{1776120}

\bibitem[Zee57]{Zeeman}
E.~C. Zeeman, \emph{A proof of the comparison theorem for spectral sequences},
  Proc. Cambridge Philos. Soc. \textbf{53} (1957), 57--62. \MR{84769}

\end{thebibliography}

\bigskip
\end{document}